\newtheorem{theorem}{Theorem}[section]
\newtheorem{corollary}[theorem]{Corollary}
\newtheorem{proposition}[theorem]{Proposition}
\newtheorem{lemma}[theorem]{Lemma}
\newtheorem{remark}[theorem]{Remark}
\newtheorem{example}[theorem]{Example}
\newtheorem{definition}[theorem]{Definition}
\numberwithin{equation}{section}
\def\T{\mathbb{T}}
\def\R{\mathbb{R}}
\def\Z{\mathbb{Z}}
\def\N{\mathbb{N}}
\def\C{\mathbb{C}}
\def\Q{\mathbb{Q}}
\newcommand{\supp}{\mbox{supp}\,}
\newcommand{\spec}{\textit{spec}\,}
\def\D{\mathcal{D}'}
\def\O{\mathcal{O}}
\def\H{\mathcal{H}}
\def\veps{\varepsilon}
\def\l{\ell}
\def\to{\rightarrow}
\def\To{\longrightarrow}
\def\dim{\mbox{dim}}
\def\diag{\mbox{diag}}
\def\ds{\displaystyle}
\title{Global Hypoellipticity for First-Order Operators on Closed Smooth Manifolds}
\author{
Fernando de \'Avila Silva \thanks{PPGM - UFPR - Brazil (Supported by CAPES Foundation,  Ministry of Education of Brazil) }
\and  Todor Gramchev \thanks{DMI - UNICA - Italy (Partially supported by a PRIN project of MIUR, Italy and GNAMPA, INDAM)}
\and Alexandre Kirilov \thanks{DMAT - UFPR - Brazil (Partially supported by IMI--UFPR and CAPES Foundation)} \thanks{Corresponding Author}
}
\begin{document}

\maketitle


\begin{abstract}
The main goal of this paper is to address global hypoellipticity issues for the class of first order pseudo-differential operators $L = D_t + C(t,x,D_x)$, where $(t,x) \in \T \times M$, $\T$ is the one-dimensional torus, $M$ is a closed manifold and $C(t,x,D_x)$ is a first order pseudo-differential operator on $M$, smoothly depending on the periodic variable $t$. In the case of separation of variables, when $C(t,x,D_x) = a(t)p(x,D_x)+ib(t)q(x,D_x)$, we give necessary and sufficient conditions for the global hypoellipticity of $L$. In particular, we show that the famous \emph{(P)} condition of Nirenberg-Treves is neither necessary nor sufficient to guarantee the global hypoellipticity of $L$.
\end{abstract}


\

  \begin{center}
  \begin{minipage}{\textwidth}
    \tableofcontents
    \addtocontents{toc}{~\hfill\textbf{Page}\par}
  \end{minipage}
  \end{center}


\section{Introduction}

The main goal on the present paper is to investigate the Global Hypoellipticity (GH) of the following class of operators:
\begin{equation}\label{Super-General-L}
L \doteq D_t + C(t,x,D_x), (t,x) \in \T \times M,
\end{equation}
where $\T=\R/2\pi\Z$ stands for the flat torus, $M$ is a closed smooth manifold (compact smooth  manifold without boundary) endowed with a
positive measure $dx$ and $C(t,x,D_x)$ is a first order pseudo-differential operator on $M$ smoothly depending on the periodic variable $t$.

We propose a novel approach, as far as we know, based on generalizations for parameter depending operators which were inspired by:
\begin{itemize}
   \item [ i)] By J. Hounie's abstract approach (Trans AMS, 1979) for the study of Global Hypoellipticity and Global Solvability of the abstract operator $\partial_t + b(t,A) = \partial_t + b_0(t) A + r(t,A)$, with $t\in \T$ and $A$ being a linear self-adjoint operator, densely defined in a separable complex Hilbert space $H$ which is unbounded, positive, and has eigenvalues diverging to $+\infty$; and $r(t,A)$ is a lower order term in a suitable sense.

    \item [ ii)]  Secondly, we mention the S. Greenfield's and N. Wallach's paper (Trans AMS, 1973) where the authors investigate the Global Hypoellipticity of invariant differential operators with respect to the eigenspaces of a fixed elliptic normal differential operator $E$, as well as the recent generalization of the notion of invariance for elliptic pseudo-differential operators on compact manifolds by J. Delgado and M. Ruzhansky (C.R. Math. Acad. Sci.,2014) where the authors use a discretization approach based on the Fourier expansions for characterizing functional spaces defined by R.T. Seeley, (Proc. AMS, 1965 and 1969). We emphasize that the novelty in our case is the presence of parameter $t$, which might lead to bifurcation type behavior in the presence of multiple eigenvalues.

    \item [ iii)] Finally, we apply  reduction to normal forms for first order operators on tori cf. D. Dickinson, T. Gramchev and M. Yoshino (Proc. Edinb. Math. Soc., 2002) in our abstract context. Here comes on of the main novelties of our paper: we introduce the notion of Diophantine sequences which turns out to be necessary and sufficient condition for the Global Hypoellipticity provided the imaginary part mean values $\nu_j =\int_{0}^{2\pi} B_j(t) dt$ growth at most logarithmically.
\end{itemize}

We observe that, for such Diophantine sequences, not surprisingly in view of the appearance of sequences which are not integers, the celebrated theorems in Diophantine metric theory are not applicable to our abstract Diophantine condition, see V. Beresnevich, D. Dickinson and S. Velanu, with an appendix by R. C. Vaughan \cite{BDV07} for general results on exceptional sets, and A. Gorodnik and A. Nevo \cite{GN15}.

We emphasize that the study of the (GH) of such a general class of operators is a highly non-trivial problem and it seems impossible to attack it by a unified approach, even when $C(t,x,D_x)$ is a first order differential operator on $M$. We mention that the main body of results on (GH) for differential operators is localized in the case where the compact manifold $M$ is a flat torus, see the impressive list of works \cite{BERG99,BCP04,BDGK15,CC00,DGY97,GW1,HOU79,HOU82,Petr11} and the references therein.

However, we remark that it is possible to treat a general class of operators without requiring smoothness with respect to the dual variables, as in the recent works of M. Ruzhansky and V. Turunen, see \cite{RT3,RT1,RT2}.

Moreover, we recall the famous Katok conjecture in \cite{Kat01,Kat03}, and also, the related Greenfield-Wallach conjecture in \cite{GW3} which states: if a closed, connected, orientable manifold admits a (GH) vector field, then this manifold is diffeomorphic to a torus and this vector field is smoothly conjugate to a constant Diophantine vector field (see also G. Forni \cite{Fo08} and A. Kocsard \cite{Koc09}).

Our crucial hypothesis is inspired by two works: S. Greenfield's and N. Wallach's paper \cite{GW2} in which the authors investigate the (GH) of invariant differential operators with respect to the eigenspaces of a fixed elliptic normal differential operator $E$, and  J. Delgado's and M. Ruzhansky's papers \cite{DR15,DR14,DR14JFA} in which they investigate the recent generalization of the notion of invariance for elliptic pseudo-differential operators on compact manifolds. Both notions lead to the possibility of using discretization approaches based on the Fourier expansions for characterizing functional spaces, defined by R.T. Seeley \cite{See65,See69}.

Our starting point is analogous: we fix an elliptic pseudo-differential operator $E(x,D_x)$ on $M$, and assume
\begin{equation}\label{hyp1}
[C(t,x,D_x),E(x,D_x)]=0, \ \ \forall t \in \T.
\end{equation}

However, in view of the presence of a global bifurcation of the parameter $t$, the commutation hypothesis \eqref{hyp1} is not sufficient. We also had to assume that
\begin{equation}\label{hyp1x}
C(t,x,D_x) \mbox{ is normal, namely, \ } C^*C=C \, C^*,
\end{equation}
where $C^*$ stands for the adjoint of $C$ with respect to $L^2(M,dx)$.

In fact, writing  uniquely
\begin{equation}\label{hyp1-AeB}
  A = \frac{C+C^*}{2} \ \mbox{ \ and \ } \ \ B = \frac{C-C^*}{2i},
\end{equation}
we have
\begin{equation}\label{hyp2}
 C(t,x,D_x) = A(t,x,D_x)+iB(t,x,D_x),
\end{equation}
and these two assumptions (commutativity with $E(x,D_x)$ and normality) are equivalent to the following commutative conditions:
\smallskip
\begin{equation}\label{hyp-self}
A^*(t,x,D_x) = A(t,x,D_x) \ \ \textrm{ and } \ \ B^*(t,x,D_x) = B(t,x,D_x);
\end{equation}
\smallskip
\begin{equation}\label{hyp3}
[A(t,x,D_x),E(x,D_x)]= [B(t,x,D_x),E(x,D_x)]=0;
\end{equation}
\smallskip
\begin{equation}\label{hyp4}
[A(t,x,D_x),B(t,x,D_x)]=0;
\end{equation}
for any $t \in \T$.

We recall the reader that, even for matrices, the centralizer is a not commutative group; thus the condition \eqref{hyp3} does not imply \eqref{hyp4}.

One more restriction is necessary, because of the possible bifurcation phenomena with respect to $t$, when we have multiple eigenvalues divergent to infinity:
\begin{quote}
{\em
 there exist unitary operators $S_t:L^2(M)\rightarrow L^2(M), \ S^*_t=S_t^{-1},$ smoothly depending on $t$, such that
 \begin{equation*}\label{hyp4-5}
    S_t^* A \, S_t \ \mbox{ \ and \ } \ S_t^* B \, S_t
  \end{equation*}
are simultaneously diagonal on the eigenspaces of  $E(x,D_x)$.}
\end{quote}

In the particular case of the separation of variables we can write \eqref{hyp1-AeB} as
\begin{align*}
 A(t,x,D_x) & = a(t)\otimes p(x,D_x) \ \mbox{ and } \\[2mm]
 B(t,x,D_x) & = b(t) \otimes q(x,D_x),
\end{align*}
where $a$ and $b$  are smooth, real functions on $\T$, and $p(x,D_x)$ and $q(x,D_x)$ are first order pseudo-differential operators on $M$, and the hypothesis \eqref{hyp-self}, \eqref{hyp3} and \eqref{hyp4} are respectively equivalent to
\begin{equation}\label{hyp-selfpq}
p(x,D_x)=p^*(x,D_x) \ \textrm{ and } \ q(x,D_x) = q^*(x,D_x);
\end{equation}
\begin{equation}\label{hyp3pq}
[p(x,D_x),E(x,D_x)] =0 \ \textrm{ and } \ [q(x,D_x),E(x,D_x)]=0;
\end{equation}
\begin{equation}\label{hyp4pq}
[p(x,D_x),q(x,D_x)]=0.
\end{equation}

Observe that, from these hypothesis, we obtain that $p(x,D_x)$ and $q(x,D_x)$ are simultaneously diagonalizable on each eigenspace of $E(x,D_x)$, therefore there exists a unitary operator $S$ such that
\begin{equation}\label{new-hyp4-5}
    S^* p(x,D_x) S \ \mbox{ and } \ S^* q(x,D_x) S
\end{equation}
are diagonal operators.

We outline our main novelties in the separation of variables case. First, assuming $b \not \equiv 0$ and denoting by $\{\nu_j\}$ the sequence of elements of the diagonal of $q(x,D_x)$, given in \eqref{new-hyp4-5}, and assuming that $|\nu_j| \to \infty$, we have:
\begin{enumerate}
  \item[i.] if $b$ does not change, then $L$ is (GH);
  \item[ii.] if $b$ changes sign and the growth of $|\nu_j|$ is super-logarithmic, then $L$ is not (GH);
  \item[iii.] finally, if $b$ changes sign and the growth of $|\nu_j|$ is at most logarithmic, then $L$ is (GH) if, and only if, a Diophantine phenomena occur.
\end{enumerate}

In the case where $b \equiv 0$, we denote by $\{\mu_j\}$ the sequence of the diagonal of $p(x,D_x)$ on the eigenspaces of $E(x,D_x)$, and by $a_0$ the mean value of $a(t)$ on $\T$. Then we have that $L$ is (GH) if, and only if, $a_0$ has at most finitely many resonances with respect to the sequence $\{\mu_j\}$, namely there exists $j_0\in \N$ such that
\begin{equation*}
a_0\mu_j \not\in \Z, \ \forall j \geqslant j_0,
\end{equation*}
and $a_0$ is non-Liouville with respect to the sequence $\{\mu_j\}$, namely there exist $\delta \geqslant 0$, $C>0$ and $R\gg 1$ such that \begin{equation}\label{nonLiouville-introd}
\inf_{\l \in \Z} |a_0\mu_j + \l| \geqslant C j^{-\delta}, \ \forall j \geqslant R.
\end{equation}

As an example, consider the following operators on the bidimensional torus:
\begin{eqnarray*}
  L   &=& D_t + a(t)D_x + ib(t)\log^\rho(2+|D_x|), \\[2mm]
  L_{a_0,b_0} &=& D_t + \ a_0\ D_x + \ ib_0\ \log^\rho(2+|D_x|),
\end{eqnarray*}
where $\rho>0, \ a,b \in C^\infty(\T)$, and $a_0$ and $b_0$ are the mean value of $a$ and $b$ on $\T$, respectively. In the case that $b \not \equiv 0,$ we have:

\begin{enumerate}
  \item if $\rho>1$, the operator $L$ is (GH) if, and only if, $b$ does not change sign;
  \item if $\rho \leqslant 1$, the operator $L$ is (GH) if, and only if, the operator $L_{a_0,b_0}$ is (GH), i.e. either $b_0\neq 0$ or $b_0=0$ and $a_0$ is an irrational non-Liouville number.
\end{enumerate}

We will discuss this example with more details on page \pageref{example-intro}, subsection \ref{example-intro}, where we compare our results with that obtained by J. Hounie, in \cite{HOU79}.

Observe that, in the presence of Diophantine phenomena, it is not possible to use the machinery of pseudo-differential calculus, since the inverse operator does not satisfy the difference estimates of M. Ruzhansky and V. Turunen \cite{RT3}, and J. Delgado and M. Ruzhansky \cite{DR14,DR14JFA}. On the other hand, we note that, in a different context, J. Delgado and M. Ruzhansky rely on invariant spaces without the presence of bifurcation parameters.

We point out that one of the crucial ingredients of our approach is the use of the corollary from Weyl's asymptotic counting function for $E$ on the asymptotic behaviour of the sequences $\{\mu_j\}$ and $\{\nu_j\}$, as well as the sequence space characterization by R. T. Seeley \cite{See65}. See also T. Gramchev, S. Pillipovic and L. Rodino \cite{GPR11} for hypoellipticity of Shubin type operators on $\R^n$.

\section{Functional spaces}

Let $M$ be a closed smooth manifold endowed with a positive measure $dx$. The inner product on the Hilbert space  $L^2(M)=L^2(M,dx)$ is  given by
\begin{equation*}
(f,g)_{L^2(M)} = \int_{M}{f(x)\overline{g(x)}dx}.
\end{equation*}

Denoting by $\H^s(M)$ the correspondent Sobolev space of order $s$ on $M$, we have
\begin{equation}\label{sobolev-dx}
C^\infty(M) = \bigcap_{s \in \R}\H^{s}(M) \mbox{ \ and \ } \D(M) = \bigcup_{s \in \R}\H^{s}(M).
\end{equation}

We denote by $\Psi^m(M)$ and by $\Psi^m_{cl}(M)$ the usual H\"ormander class of pseudo-differential operators and the classical pseudo-differential operators of order $m\in \R$, respectively (see, e.g., \cite{RT1}). Furthermore, we denote by $\Psi^m_{el}(M)$ the class of elliptic operators in $\Psi^m_{cl}(M)$, with $m>0$ in order to guarantee the discreteness of spectrum.

Suppose now that $E(x,D_x)\in \Psi^m_{el}(M)$, $m>0$,  is normal, namely
$$E(x,D_x)\circ E^*(x,D_x)=  E^*(x,D_x)\circ E(x,D_x).$$
Then:
\begin{enumerate}
  \item[$i.$] the spectrum $\spec(E)=\{\lambda_j; j\in \N\} \subset \R$ is discrete and $|\lambda_j| \to \infty$;
  \item[$ii.$] there is an orthonormal basis $\{\varphi_j\}_{j \in \N}$ for $L^2(M)$, where each $\varphi_j$ is a smooth function on $M$, such that $E\varphi_j = \lambda_j\varphi_j,$ for all $j \in \N$;
  \item[$iii.$] the eigenspace $E_{\lambda_j}$ of $E$ corresponding to $\lambda_j$, has finite dimension, for any $j\in \N$.
\end{enumerate}

We may assume that $\lambda_j>0,$ for any $j$. In fact, if this is not true, it will be enough to consider the following elliptic operator $E_{\delta} \doteq (E^*E + \delta)^{1/2}$, with $\delta>0$.

Finally, since $\lambda_j \to \infty$, counting the multiplicity of $\lambda_j$, we may assume that
\begin{equation}\label{hypsp1}
\spec (E)  = \{ 0 < \lambda_1 \leqslant \lambda_2 \leqslant\ldots \lambda_j \leqslant\ldots \to +\infty\},
\end{equation}
keeping the same orthonormal basis, after eventual reordering.

Now, fixed a normal elliptic operator $E$, as above, and an orthonormal basis $\{\psi^j_k\}_{k=1}^{d_j}$ of the eigenspace $E_{\lambda_j}$, we can write
\begin{equation*}
L^2(M) = \bigoplus_{j=1}^{\infty}E_{\lambda_j}, \ \ E_{\lambda_j} = \mbox{span } \left\{ \psi^j_{k} \right\}_{k=1}^{d_j}, \ j \in \N.
\end{equation*}
Thus, any distribution $u \in \D(M)$ may be represented as a Fourier series
\begin{equation}\label{Fourier1}
u(x) = \sum_{j\in \N}\left\langle  u^j, \psi^j(x) \right\rangle_{\C^{d_j}} = \sum_{j=1}^\infty\sum_{k=1}^{d_j} u_k^j \psi_k^j(x),
\end{equation}
where $u_k^j=\langle u,\overline{\psi_k^j} \ dx \rangle$. Of course, when $u \in L^2(M)$, we have the usual scalar product and $u_k^j=(u,{\psi_k^j})$.

Concerning the asymptotics of eigenvalues, from Shubin's theorems 15.2 and 16.1 in \cite{Shubin}, we have the following lemma.

\begin{lemma}[Weyl's Asymptotic Formula] \label{Weyl}
Let $E \in \Psi^m_{el}(M)$ be a normal elliptic operator with spectrum $\spec(E) = \{\lambda_j; j \in \N\}$. Then
\begin{equation}\label{Weyl-formula}
\lambda_j \sim \ c j^{\frac{m}{n}}, \quad j \to \infty,
\end{equation}
for some positive constant $c$.
\end{lemma}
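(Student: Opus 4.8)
The statement to prove is Weyl's Asymptotic Formula (Lemma \ref{Weyl}): for a normal elliptic operator $E \in \Psi^m_{el}(M)$ with $m>0$ on a closed $n$-dimensional manifold $M$, the eigenvalues satisfy $\lambda_j \sim c\, j^{m/n}$. The plan is to reduce the problem to the classical Weyl law for positive self-adjoint elliptic operators, which is precisely the content of Shubin's Theorems 15.2 and 16.1, and then undo the reductions. First I would observe that, by the normalization already carried out in the text, we may assume $0 < \lambda_1 \leqslant \lambda_2 \leqslant \cdots \to +\infty$; the passage from a general normal $E$ to the positive operator $E_\delta = (E^*E+\delta)^{1/2}$ changes the eigenvalues only by a bounded perturbation and a fixed power, so it suffices to prove the asymptotics for a positive self-adjoint elliptic operator, which I will still call $E$, of order $m$.

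The core step is to invoke the counting function $N(\lambda) \doteq \#\{j : \lambda_j \leqslant \lambda\}$ and Shubin's result that, for a positive self-adjoint elliptic pseudo-differential operator of order $m$ on a closed $n$-manifold,
\begin{equation*}
N(\lambda) \sim C_0\, \lambda^{n/m}, \quad \lambda \to +\infty,
\end{equation*}
where $C_0 = (2\pi)^{-n}\,\mathrm{vol}\,\{(x,\xi)\in T^*M : e_m(x,\xi) < 1\}$ is expressed through the principal symbol $e_m$; this is exactly Theorems 15.2 and 16.1 in \cite{Shubin}. The constant $C_0$ is positive and finite because ellipticity forces the sublevel set of the (positively homogeneous, nonvanishing on $\xi\neq 0$) principal symbol to be compact with nonempty interior in each fiber, and $M$ is compact.

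Finally I would perform the standard inversion: from $N(\lambda_j) \geqslant j$ and $N(\lambda_j - 0) \leqslant j$ together with $N(\lambda)\sim C_0\lambda^{n/m}$, one gets $j \sim C_0 \lambda_j^{n/m}$, hence $\lambda_j \sim (j/C_0)^{m/n} = c\, j^{m/n}$ with $c = C_0^{-m/n} > 0$. A small amount of care is needed here because the $\lambda_j$ are listed with multiplicity and $N$ may have jumps, but the sandwiching argument $N(\lambda_j - \varepsilon) \leqslant j \leqslant N(\lambda_j)$ combined with the monotone regular variation of $N$ handles this routinely. The main obstacle, such as it is, is purely bookkeeping: one must make sure the reduction $E \rightsquigarrow E_\delta = (E^*E+\delta)^{1/2}$ genuinely preserves the asymptotic exponent — the point being that $E^*E$ is elliptic of order $2m$ with positive principal symbol $|e_m|^2$, its square root is elliptic of order $m$, and adding $\delta$ is a lower-order (indeed order-zero) perturbation that does not affect the leading term of the counting function — so that the hypotheses of Shubin's theorems apply verbatim. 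Everything else is a direct citation.
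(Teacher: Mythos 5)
Your proof is correct and follows the same route the paper takes: the paper's entire argument for Lemma \ref{Weyl} is the citation of Shubin's Theorems 15.2 and 16.1, preceded by the reduction $E \rightsquigarrow E_\delta = (E^*E+\delta)^{1/2}$ described just above the lemma, and your write-up simply supplies the standard inversion from the counting-function asymptotics $N(\lambda)\sim C_0\lambda^{n/m}$ to the eigenvalue asymptotics $\lambda_j \sim C_0^{-m/n} j^{m/n}$, which the paper leaves implicit.
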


Now, combining proposition 10.2 from \cite{Shubin} and Weyl's formula we have:

\begin{proposition}\label{prop-smooth-1}
For the series
\begin{equation}\label{1-prop-smooth-1}
\sum_{j \in \N} \sum_{k=1}^{d_j} c_k^{j} \psi_k^j(x),
\end{equation}
with complex coefficients $c_k^j$, the following three statements are equivalent:
\begin{enumerate}
        \item[i.] The series (\ref{1-prop-smooth-1}) converges in the $C^{\infty}(M)$ topology;
        \item[ii.] The series (\ref{1-prop-smooth-1}) is the Fourier expansion of some $f\in C^{\infty}(M)$;
        \item[iii.] For any integer $N$ we have
\begin{equation}\label{2-prop-smooth-1}
\sum_{j \in \N} |c_k^j|^2 j^{-N} < + \infty, \textit{ for each } k \in \{1, \ldots d_j\}.
\end{equation}
\hspace{-13.5mm} Moreover, the following conditions are equivalent:

        \item[iv.] The series (\ref{1-prop-smooth-1}) converges in the $\D(M)$ topology;
        \item[v.] The series (\ref{1-prop-smooth-1}) is the Fourier expansion of some $u\in \D(M)$;
        \item[vi.] For some integer $N,$ (\ref{2-prop-smooth-1}) holds;
\end{enumerate}
\end{proposition}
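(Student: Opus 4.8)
The plan is to reduce everything to the spectral description of the Sobolev scale attached to $E$ and then read off $C^\infty(M)$ and $\mathcal{D}'(M)$ from \eqref{sobolev-dx}. First I would record the Sobolev characterization. After the reduction already explained (replace $E$ by $(E^*E+\delta)^{1/2}$, $\delta>0$, if necessary) we may assume $E$ is positive with spectrum as in \eqref{hypsp1}; then Seeley's complex powers \cite{See65,See69} make $E^{s/m}$ a classical elliptic operator of order $s$ for every $s\in\R$, diagonal in the basis $\{\psi^j_k\}$ with eigenvalue $\lambda_j^{s/m}$ on $E_{\lambda_j}$, and mapping $\mathcal{H}^t(M)$ isomorphically onto $\mathcal{H}^{t-s}(M)$. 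Hence $u\in\mathcal{H}^s(M)$ iff $E^{s/m}u\in L^2(M)$, i.e.
\begin{equation*}
u\in\mathcal{H}^s(M)\iff \sum_{j\in\N}\lambda_j^{2s/m}\sum_{k=1}^{d_j}|u^j_k|^2<\infty,
\end{equation*}
which is the content of Proposition~10.2 in \cite{Shubin}. Weyl's Lemma~\ref{Weyl} gives $\lambda_j\sim c\,j^{m/n}$, so $\lambda_j^{2s/m}\asymp j^{2s/n}$ and the same equivalence holds with $\lambda_j^{2s/m}$ replaced by $j^{2s/n}$. Since the counting function of $E$ is $\asymp\lambda^{n/m}$, the multiplicities satisfy $d_j\le C\,j$, so for a nonnegative double sequence the rapid (resp. at most polynomial) decay in $j$ of $\sum_{k=1}^{d_j}(\cdot)$ is equivalent to that of $\max_{1\le k\le d_j}(\cdot)$; this lets me pass freely between the vector $u^j=(u^j_k)_k$ and its components, which is all that is needed to match the componentwise form of $iii.$ and $vi.$

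Next I would run the first cycle $i.\Rightarrow ii.\Rightarrow iii.\Rightarrow i.$ For $i.\Rightarrow ii.$: if the partial sums $S_J=\sum_{j\le J}\sum_k c^j_k\psi^j_k$ converge to $f$ in $C^\infty(M)$, then $f\in C^\infty(M)$ and, pairing with $\psi^i_l$ and using continuity of the $L^2$ inner product together with orthonormality, $(f,\psi^i_l)=\lim_J(S_J,\psi^i_l)=c^i_l$, so the series is the Fourier expansion of $f$. For $ii.\Rightarrow iii.$: $f\in C^\infty(M)=\bigcap_s\mathcal{H}^s(M)$ forces $\sum_j j^{2s/n}\sum_k|c^j_k|^2<\infty$ for every real $s$; taking $s=-nN/2$ and letting $N$ range over all integers yields $\sum_j|c^j_k|^2 j^{-N}<\infty$. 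For $iii.\Rightarrow i.$: rapid decay of the coefficients makes, for each fixed real $s$, the tail $\sum_{j>J}j^{2s/n}\sum_k|c^j_k|^2$ tend to $0$, so $(S_J)$ is Cauchy in every $\mathcal{H}^s(M)$ and therefore converges in $C^\infty(M)$. The second cycle $iv.\Rightarrow v.\Rightarrow vi.\Rightarrow iv.$ is verbatim the same, replacing $\bigcap_s$ by $\bigcup_s$, ``for every real $s$ / every integer $N$'' by ``for some real $s$ / some integer $N$'', ``rapid decay'' by ``at most polynomial growth'', $C^\infty(M)$ by $\mathcal{D}'(M)$, and using continuity of the distributional pairing on $\mathcal{H}^{-s}(M)\times\mathcal{H}^{s}(M)$.

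The only genuinely non-formal ingredient is the spectral description of $\mathcal{H}^s(M)$ through the powers of $E$, and this is exactly what is quoted from \cite{Shubin} and Seeley \cite{See65,See69}; everything else is bookkeeping: the reduction to $E>0$, the change of variables $\lambda_j\leftrightarrow j^{m/n}$ via Weyl, the sign convention relating $s\in\R$ to the integer $N$ (so that ``for all/some $N$'' really does capture $\bigcap_s/\bigcup_s$), and the harmless passage between the vectors $u^j$ and their components afforded by $d_j\le C\,j$. I expect the only place that warrants care is this last cluster of index conventions, because $iii.$ and $vi.$ are phrased componentwise while the Sobolev norms are naturally expressed in terms of $\|u^j\|_{\C^{d_j}}$; once those are aligned the proof is immediate.
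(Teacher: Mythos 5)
Your proposal follows exactly the route the paper intends: the paper itself offers no proof of Proposition~\ref{prop-smooth-1}, only the remark that it is obtained ``by combining Proposition~10.2 from \cite{Shubin} and Weyl's formula,'' and your argument is precisely that combination carried out in full (Seeley's powers to get the spectral form of the Sobolev scale, Weyl to trade $\lambda_j$ for $j^{m/n}$, then the two cyclic chains of implications). The cycles $i.\Rightarrow ii.\Rightarrow iii.\Rightarrow i.$ and $iv.\Rightarrow v.\Rightarrow vi.\Rightarrow iv.$ are handled correctly.

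One point deserves a correction, though it does not affect the structure of the argument. You claim that the crude Weyl asymptotic $N(\lambda)\asymp\lambda^{n/m}$ forces $d_j\leqslant Cj$. This is not so: from the leading-order count alone one only gets the lower bound $\sigma_j\geqslant c\,j^{m/n}$ on the $j$-th \emph{distinct} eigenvalue, and no upper bound on the gap $d_j=N(\sigma_j)-N(\sigma_{j-1})$. To obtain a bound on $d_j$ one needs the sharper remainder estimate $N(\lambda)=C\lambda^{n/m}+O(\lambda^{(n-1)/m})$ (this is what Shubin's Theorem~16.1, cited in the paper, supplies), which yields $d_j\leqslant C\sigma_j^{(n-1)/m}$ and, after bootstrapping, $\sigma_j\leqslant Cj^m$ and $d_j\leqslant Cj^{n-1}$. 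The bound $Cj$ is already false for the round sphere $\sphere^{n-1}$ with $n>3$. What your argument actually uses, and what is true, is that $d_j$ is bounded by a fixed polynomial in $j$; any such bound suffices to pass between the vector $\|u^j\|^2=\sum_k|u^j_k|^2$ and the componentwise quantities $|u^j_k|^2$, since a polynomial factor is absorbed by the quantifiers ``for every $N$'' / ``for some $N$.'' So simply replace ``$d_j\leqslant Cj$'' by ``$d_j$ is polynomially bounded in $j$, by the remainder term in Weyl's law,'' and the rest of the proof stands as written.
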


And we have the following characterization of Sobolev spaces.

\begin{proposition}\label{Sobolevmatr}
Let $\Gamma = \{\Gamma^j \}_{j=1}^\infty $ and $\Gamma^j \in M_{d_j\times d_j}(\C)$, with $j \in \N$.
For any $s\geqslant 0$ we have:
\begin{enumerate}
        \item[i.] The linear map $\Gamma: \H^s(M) \to \H^s(M)$ defined by
\begin{equation*}
\Gamma u = \sum_{j \in \N} \left\langle \Gamma^j  u^j, \psi^{j}(x)   \right\rangle_{\C^{d_j}}
\end{equation*}
is continuous if, and only if,
\begin{equation}\label{Hsnorm1}
\sup_{j \in \N} || (\Gamma^j)^* \Gamma^j \| = \sup_{j \in \N}
\left\{ \max \{ \sigma; \sigma \in \spec (\Gamma_j^* \Gamma_j )\}
\right\} < +\infty
\end{equation}
If the multiplicities are bounded, i.e.,
\begin{equation} \label{Hsnorm2}
\sup \{ d_j,  j \in \N \} = \overline{d} <+\infty,
\end{equation}
the condition \eqref{Hsnorm1} is equivalent to uniform boundedness of the entries of $\Gamma^j$, $j \in \N$, i.e.,
\begin{equation}\label{Hsnorm3}
\sup_{j \in \N}\ \max_{r,s\in\{1, \ldots,d_j\}} | \Gamma^j_{rs} | <+\infty.
\end{equation}
        \item[ii.] If $ 0 \notin \spec(E)$ and \eqref{Hsnorm3} holds, then the following expression is an equivalent norm on $\H^s(M)$

\begin{equation}\label{Hsnorm4}
||\Gamma u||_{\H^s(M)} \doteq || E^{s/m}\Gamma u||_{L^2(M)}
\end{equation}
if, and only if,

\begin{equation}\label{Hsnorm5}
\inf_{j \in \N} || (\Gamma^j)^* \Gamma_j \| = \inf_{j \in \N}
\left( \min  \{ \sigma; \sigma \in \spec (\Gamma_j^* \Gamma_j )\}\right) >0.
\end{equation}
\end{enumerate}
Moreover
\begin{equation} \label{Sobolev1matr}
u \in \H^s(M) \Longleftrightarrow \sum_{j\in\N} {\|u^j\|^2 \lambda_j^{\frac{2s}{m}} }< + \infty
              \Longleftrightarrow \sum_{j\in\N} {|u_j|^2 j^{\frac{2s}{n}} }< + \infty.
\end{equation}
\end{proposition}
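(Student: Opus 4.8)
The plan is to first reduce everything to the spectral description of $\H^s(M)$ by the eigenfunction expansion of $E$, that is to prove \eqref{Sobolev1matr}, and then to read parts (i) and (ii) off from elementary ``block-diagonal'' linear algebra on the Fourier side. For \eqref{Sobolev1matr} I would invoke the Seeley-type characterisation of Sobolev spaces through the eigenfunctions of the elliptic operator $E$ (cf. \cite{See65,See69}; recall we have arranged $\lambda_j>0$, passing to $E_\delta$ if necessary), which gives $u\in\H^s(M)\Longleftrightarrow\sum_j\lambda_j^{2s/m}\|u^j\|_{\C^{d_j}}^2<\infty$, and then Weyl's Lemma~\ref{Weyl}, $\lambda_j\sim cj^{m/n}$, to replace the weight $\lambda_j^{2s/m}$ by the multiplicity-counted $j^{2s/n}$. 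Since $E^{s/m}$ acts on the basis by $E^{s/m}\psi^j_k=\lambda_j^{s/m}\psi^j_k$, Parseval rewrites this as $\|u\|_{\H^s(M)}\asymp\|E^{s/m}u\|_{L^2(M)}$ with $\|E^{s/m}u\|_{L^2(M)}^2=\sum_j\lambda_j^{2s/m}\|u^j\|^2$, and the constants in this equivalence do not depend on $j$; I would keep that uniformity in view, since it is what makes the single-eigenspace test vectors used below effective.

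For part (i), applying \eqref{Sobolev1matr} to $\Gamma u$ gives $\|\Gamma u\|_{\H^s(M)}^2\asymp\sum_j\lambda_j^{2s/m}\|\Gamma^ju^j\|^2$. Sufficiency of \eqref{Hsnorm1} is then immediate from the termwise bound $\|\Gamma^ju^j\|^2=\langle(\Gamma^j)^*\Gamma^ju^j,u^j\rangle\le\|(\Gamma^j)^*\Gamma^j\|\,\|u^j\|^2$. For necessity I would test $\Gamma$ on $u=\sum_k v_k\psi^{j_0}_k\in C^\infty(M)$, where $v$ is a unit vector realising $\|\Gamma^{j_0}v\|=\|\Gamma^{j_0}\|_{\mathrm{op}}$: since $\|u\|_{\H^s(M)}^2\asymp\lambda_{j_0}^{2s/m}$ and $\|\Gamma u\|_{\H^s(M)}^2\asymp\lambda_{j_0}^{2s/m}\|\Gamma^{j_0}\|_{\mathrm{op}}^2$, continuity of $\Gamma$ bounds $\|\Gamma^{j_0}\|_{\mathrm{op}}^2$ by the squared operator norm of $\Gamma$, uniformly in $j_0$; as $\|(\Gamma^j)^*\Gamma^j\|=\|\Gamma^j\|_{\mathrm{op}}^2=\max\spec((\Gamma^j)^*\Gamma^j)$, this is \eqref{Hsnorm1}. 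Under the multiplicity bound \eqref{Hsnorm2}, the equivalence of \eqref{Hsnorm1} with \eqref{Hsnorm3} is just the finite-dimensional norm comparison $\max_{r,s}|\Gamma^j_{rs}|\le\|\Gamma^j\|_{\mathrm{op}}\le d_j\max_{r,s}|\Gamma^j_{rs}|\le\overline d\,\max_{r,s}|\Gamma^j_{rs}|$.

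For part (ii), the same Parseval identity gives $\|E^{s/m}\Gamma u\|_{L^2(M)}^2=\sum_j\lambda_j^{2s/m}\|\Gamma^ju^j\|^2$, and continuity of $\Gamma$ (from part (i)) already yields the upper bound $\|E^{s/m}\Gamma u\|_{L^2(M)}\lesssim\|u\|_{\H^s(M)}$; hence \eqref{Hsnorm4} is an equivalent norm on $\H^s(M)$ precisely when it also dominates $\|\cdot\|_{\H^s(M)}$, i.e.\ when $\sum_j\lambda_j^{2s/m}\|u^j\|^2\lesssim\sum_j\lambda_j^{2s/m}\|\Gamma^ju^j\|^2$. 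If \eqref{Hsnorm5} holds with infimum $\delta>0$, then $(\Gamma^j)^*\Gamma^j\ge\delta I$ forces $\|\Gamma^ju^j\|^2\ge\delta\|u^j\|^2$ termwise and the bound follows; if $\inf_j\min\spec((\Gamma^j)^*\Gamma^j)=0$, I would pick $j_\ell$ and unit vectors $v_\ell$ with $\|\Gamma^{j_\ell}v_\ell\|^2=\min\spec((\Gamma^{j_\ell})^*\Gamma^{j_\ell})\to0$ and test on $u_\ell=\sum_k(v_\ell)_k\psi^{j_\ell}_k$, for which $\|E^{s/m}\Gamma u_\ell\|_{L^2(M)}^2/\|u_\ell\|_{\H^s(M)}^2\asymp\min\spec((\Gamma^{j_\ell})^*\Gamma^{j_\ell})\to0$; thus \eqref{Hsnorm4} cannot be comparable to $\|\cdot\|_{\H^s(M)}$ — and, $E^{s/m}$ being injective, it is not even a norm once some $\Gamma^j$ is singular.

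The one genuinely non-elementary ingredient is the Seeley-type identity $\|u\|_{\H^s(M)}\asymp\|E^{s/m}u\|_{L^2(M)}$, with $E^{s/m}$ acting diagonally on the eigenbasis, together with Weyl's asymptotics; granting that, the rest is the pair of observations that a block-diagonal operator on a weighted $\ell^2$-sum of its blocks is bounded iff the blocks are uniformly bounded, and bounded below iff they are uniformly bounded below. The step I expect to need the most care is the \emph{uniformity in $j$} of the constants in \eqref{Sobolev1matr}: without it the single-eigenspace test functions would fail to detect $\sup_j\|\Gamma^j\|_{\mathrm{op}}$ and $\inf_j\min\spec((\Gamma^j)^*\Gamma^j)$, and it is exactly at that point that one must lean on the elliptic structure rather than on a purely Hilbert-space argument.
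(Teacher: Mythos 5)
The paper states Proposition~\ref{Sobolevmatr} without giving a proof (unlike Proposition~\ref{prop-smooth-1}, for which it points to Shubin's Proposition~10.2 and Weyl's formula, nothing is cited here); your write-up therefore cannot be compared to a paper proof, only assessed on its own merits. On that basis it is correct, and it takes exactly the route the authors evidently had in mind: prove \eqref{Sobolev1matr} first via Seeley's functional calculus for elliptic operators ($E^{s/m}$ acts diagonally on the eigenbasis, $\|\cdot\|_{\H^s}\asymp\|E^{s/m}\cdot\|_{L^2}$, with Weyl's formula converting $\lambda_j^{2s/m}$ into $j^{2s/n}$), and then read off parts (i) and (ii) from block-diagonal linear algebra, testing on single-eigenspace vectors in both directions. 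You correctly flag the one non-trivial point, namely that the constants in the norm equivalence $\|u\|_{\H^s}^2\asymp\sum_j\lambda_j^{2s/m}\|u^j\|^2$ are uniform in $j$, which is what makes the single-block test functions work; the finite-dimensional comparisons $\|(\Gamma^j)^*\Gamma^j\|=\|\Gamma^j\|_{\mathrm{op}}^2$ and $\max_{r,s}|\Gamma^j_{rs}|\le\|\Gamma^j\|_{\mathrm{op}}\le d_j\max_{r,s}|\Gamma^j_{rs}|$ are exactly right.

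One small remark worth making explicit: in part (ii) you invoke ``continuity of $\Gamma$ from part (i),'' but the stated hypothesis there is only \eqref{Hsnorm3}, which yields \eqref{Hsnorm1} only if the multiplicities are bounded as in \eqref{Hsnorm2}. This is an imprecision already present in the proposition's phrasing rather than a gap in your reasoning, but to make your part (ii) airtight you should say that you are working under \eqref{Hsnorm2} (or directly under \eqref{Hsnorm1}), since the proposition implicitly carries that hypothesis forward.
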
 \vspace{3mm}

We also define the $x$-Fourier series of a distribution  $u \in \D(\T \times M)$
\begin{equation}\label{Fourier2matr}
u = \sum_{j=1}^\infty  \left\langle u^j(t), \psi^j(x)\right\rangle = \sum_{j=1}^\infty \sum_{k=1}^{d_j} u^j_k(t)\psi^j_k(x).
\end{equation}

\begin{proposition}\label{prop-smooth-2matr}
For the series
\begin{equation}\label{1-prop-smooth-2}
\sum_{j \in \N} \sum_{k=1}^{d_j} c_k^{j}(t) \psi_k^j(x),
\end{equation}
where $c_{j}^k \in C^{\infty}(\T)$, the three following statements are equivalent:
\begin{enumerate}
        \item[i.] The series \eqref{1-prop-smooth-2} converges in the $C^{\infty}(\T \times M)$ topology;

        \item[ii.] The series \eqref{1-prop-smooth-2} is the $x$-Fourier expansion of some $f\in C^{\infty}(\T \times M)$;

        \item[iii.] For any $k \in \N$ and any integer $N$,
             \begin{equation} \label{2-prop-smooth-2}
                \max_{t\in \T}|\partial_t^k c_{\ell}^j(t)| = \O(j^{-N}), \ \textrm{ as } j \to \infty,
             \end{equation}
   for each $\ell \in \{1, \ldots d_j\}$.
\end{enumerate}
   Moreover, the following conditions are equivalent:
\begin{enumerate}
        \item[iv.] The series (\ref{1-prop-smooth-2}) converges in the $\D(\T \times M)$ topology;

        \item[v.] The series (\ref{1-prop-smooth-2}) is the Fourier expansion of some $f\in \D(\T \times M)$;

        \item[vi.] For some real $N$, (\ref{2-prop-smooth-2}) holds.
\end{enumerate}
\end{proposition}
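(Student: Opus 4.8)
The plan is to identify $C^\infty(\T\times M)$ with $C^\infty(\T;C^\infty(M))$ and to reduce the statement to Proposition~\ref{prop-smooth-1}, made \emph{uniform} in $t$ together with all $t$-derivatives. The implication (i)$\Rightarrow$(ii) is immediate: if the series converges in $C^\infty(\T\times M)$, then by completeness its sum is some $f\in C^\infty(\T\times M)$; and since for each fixed $t$ the functional $g\mapsto\int_M g(t,x)\overline{\psi_k^j(x)}\,dx$ is continuous on $C^\infty(\T\times M)$, the $x$-Fourier coefficients $u_k^j(t)$ of $f$ must equal $c_k^j(t)$, so the series is the $x$-Fourier expansion of $f$. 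Hence it suffices to prove (ii)$\Rightarrow$(iii) and (iii)$\Rightarrow$(i), which closes the cycle (i)$\Rightarrow$(ii)$\Rightarrow$(iii)$\Rightarrow$(i); the block (iv)--(vi) will then follow by the same scheme with $\bigcap_s$ replaced by $\bigcup_s$.

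For (ii)$\Rightarrow$(iii) I would fix $k,\ell\in\N$ and observe that $E(x,D_x)^\ell\partial_t^k f$ again lies in $C^\infty(\T\times M)$, hence is continuous on the compact set $\T\times M$, so $C_{k,\ell}:=\sup_{t\in\T}\|E^\ell\partial_t^k f(t,\cdot)\|_{L^2(M)}<\infty$. Since $\{\psi^j_r\}$ is an orthonormal eigenbasis of $E$ with $E\psi^j_r=\lambda_j\psi^j_r$, Parseval's identity in the $x$ variable gives, for every $t$,
$$\sum_{j\in\N}\lambda_j^{2\ell}\sum_{r=1}^{d_j}\big|\partial_t^k u^j_r(t)\big|^2\;=\;\big\|E^\ell\partial_t^k f(t,\cdot)\big\|_{L^2(M)}^2\;\le\;C_{k,\ell}^2 ;$$
retaining a single summand yields $\lambda_j^{\ell}\,\max_{r}\max_{t\in\T}|\partial_t^k u^j_r(t)|\le C_{k,\ell}$, and Weyl's Asymptotic Formula (Lemma~\ref{Weyl}), $\lambda_j\sim c\,j^{m/n}$, turns this into $\max_r\max_{t\in\T}|\partial_t^k u^j_r(t)|=\O(j^{-\ell m/n})$. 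As $\ell$ is arbitrary, this is precisely \eqref{2-prop-smooth-2} for every integer $N$.

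For (iii)$\Rightarrow$(i) it is enough to show that the (smooth) partial sums $S_J(t,x)=\sum_{j\le J}\sum_{r=1}^{d_j}c^j_r(t)\psi^j_r(x)$ are Cauchy in each seminorm $g\mapsto\sup_{t\in\T}\|\partial_t^k g(t,\cdot)\|_{\H^s(M)}$ ($k\in\N$, $s\ge 0$) of $C^\infty(\T;C^\infty(M))=C^\infty(\T\times M)$. The Sobolev characterization \eqref{Sobolev1matr} gives, for $J'<J$,
$$\big\|\partial_t^k(S_J-S_{J'})(t,\cdot)\big\|_{\H^s(M)}^2\;\le\;C\sum_{J'<j\le J}\lambda_j^{2s/m}\,d_j\,\max_{1\le r\le d_j}\big|\partial_t^k c^j_r(t)\big|^2 .$$
By Weyl's formula $\lambda_j^{2s/m}=\O(j^{2s/n})$, while its integrated form (the counting function $N(\lambda)\sim c'\lambda^{n/m}$) gives $d_j\le N(\lambda_j)=\O(j)$; hence the $j$-th term is $\O(j^{2s/n+1})\max_r|\partial_t^k c^j_r(t)|^2$. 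Applying \eqref{2-prop-smooth-2} with $N$ so large that $2s/n+1-2N<-1$, the $j$-th term is $\O(j^{-2})$ \emph{uniformly in $t$}, so the tail tends to $0$ uniformly in $t$ as $J'\to\infty$. Thus $S_J$ converges in $C^\infty(\T\times M)$, and by the first paragraph its limit is the $f$ of statement (ii).

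Finally, the equivalences (iv)$\Leftrightarrow$(v)$\Leftrightarrow$(vi) are obtained by the same three steps, now with $C^\infty(M)=\bigcap_s\H^s(M)$ replaced by $\D(M)=\bigcup_s\H^s(M)$, Proposition~\ref{prop-smooth-1}(i)--(iii) by (iv)--(vi), and ``for every $N$'' by ``for some $N$'' throughout; equivalently one may run the argument directly on $\T\times M$ using the normal elliptic operator $1-\partial_t^2+E(x,D_x)^2$ and the Seeley-type sequence-space characterization attached to it. I expect the only genuinely delicate point -- here and above -- to be the bookkeeping forced by the $j$-dependent multiplicities $d_j$: they must be controlled (not merely the eigenvalues $\lambda_j$) via the Weyl counting function, and every estimate has to be made uniform in $t\in\T$ so that the series converge in the Fréchet (resp.\ $LF$) topology and not merely pointwise in $t$.
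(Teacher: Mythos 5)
Since the paper gives no proof of this proposition — it is stated as the $t$-dependent companion of Proposition~\ref{prop-smooth-1}, itself attributed to Shubin's Proposition~10.2 together with Weyl's formula — there is no ``paper's proof'' to compare against; your argument supplies precisely the uniform-in-$t$ details one would expect, and the cycle (i)$\Rightarrow$(ii)$\Rightarrow$(iii)$\Rightarrow$(i), with the parallel cycle for (iv)--(vi), is structurally sound. The one step that deserves to be made fully explicit is the pair of bounds $d_j\le N(\lambda_j)=\O(j)$ and $\lambda_j^{2s/m}=\O(j^{2s/n})$ used in (iii)$\Rightarrow$(i). Both rely on the \emph{upper} half of Weyl's two-sided asymptotic $\lambda_j\sim c\,j^{m/n}$ (Lemma~\ref{Weyl}); that asymptotic concerns eigenvalues enumerated \emph{with} multiplicity, whereas the index $j$ in \eqref{1-prop-smooth-2} runs over distinct eigenvalues. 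The lower bound $\lambda_j\gtrsim j^{m/n}$ — which is all that (ii)$\Rightarrow$(iii) requires — survives the re-indexing automatically, but the upper bound (and hence $d_j=\O(j)$) does not without some control on the multiplicities, as the paper itself eventually imposes in \eqref{Hsnorm2}. You flag exactly this as the delicate point; it should be promoted from a remark to a stated hypothesis rather than treated as covered by Lemma~\ref{Weyl} alone. With that caveat the proof is correct, and the alternative you sketch — applying Proposition~\ref{prop-smooth-1} pointwise in $t$ and noting that \eqref{2-prop-smooth-2} is uniform in $t$ and stable under $\partial_t^k$ — would reach the same conclusion without re-deriving the Sobolev estimates.
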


\section{(GH) and the separation of variables}

In the first part of this section, we are going to show that it is enough to consider the case where all eigenvalues of
$E(x,D_x)$ are simple. In the second part, we are going to give a more precise version of the theorem announced in the introduction.

\subsection{Reduction to the diagonal form \label{reduct-diag}}

Consider the following operator
\begin{equation}\label{Model L}
L \doteq D_t + a(t)p(x,D_x) + ib(t)q(x,D_x), \quad (t,x) \in \T\times M,
\end{equation}
where $a,b \in C^\infty(\mathbb{T})$, and $p(x,D_x)$ and $q(x,D_x)$ are self-adjoint pseudo-dif\-fer\-en\-tial operators, defined in $\Psi^1(M)$, which commute with a fixed normal elliptic operator $E(x,D_x)\in \Psi^1_{el}(M)$, namely
\begin{equation}\label{pq-commutes-with-E}
     [E, p(x,D_x)] = 0 \ \mbox{ and } \ [E, q(x,D_x)] = 0.
\end{equation}

We observe that the commutation conditions \eqref{pq-commutes-with-E}, \eqref{hyp1}, \eqref{hyp3} etc. guarantee that the considered operators are invariant with respect to $E$, with the notion of invariance introduced by J. Delgado and M. Ruzhansky in the recent paper \cite{DR15}.

Observe that, under this assumption, we have $p(E_{\lambda_j}) \subset E_{\lambda_j}$ and $q(E_{\lambda_j}) \subset E_{\lambda_j}$, for any $j \in \N$. In this case we say that the operators $p$ and $q$ are $E_{\lambda_j}$--invariants.

We can also rewrite the spectrum of $E(x,D_x)$ without counting the multiplicity, as in \cite{DR14JFA,DR14,GW1},
\begin{align}
         & \spec(E)  =  \{ 0 < \sigma_1<\sigma_2  <\ldots \sigma_j < \ldots \to +\infty\}, \label{hypsp2} \\[2mm]
         & \mbox{with \ mult} (\sigma_j) = d_j , \ j\in\ \mathbb{N}, \nonumber
\end{align}
and the corresponding orthonormal basis of $L^2(M)$ as
\begin{equation}\label{hypsp2a}
\{ e^j_k; \ k=1,2,\ldots,d_j, \ j\in\N \}.
\end{equation}

Thus, each eigenspace $E_{\sigma_j}$ has dimension $d_j$ and
\begin{equation}\label{basis-E-sigmaj}
E_{\sigma_j} = \mbox{span } \{ e^j_1, e^j_2, \ldots, e^j_{d_j} \}, \mbox{ for any } j \in \N.
\end{equation}

This way, any $u \in \D(\T \times M)$ can be represented by a $x$-Fourier series as follows:
\begin{equation*}
u = \sum_{j\in \N} \, \sum_{k=1}^{d_j}u^j_k (t) e^j_k (x).
\end{equation*}

Since $p(x,D_x)$ and $q(x,D_x)$ are also $E_{\sigma_j}$-invariants, we can consider the restrictions
\begin{equation}\label{restrict}
p_j(x,D_x): E_{\sigma_j} \To E_{\sigma_j}
\ \mbox{ and } \ \,
q_j(x,D_x): E_{\sigma_j} \To E_{\sigma_j}.
\end{equation}

Thus, for $u \in \D(\T \times M)$ we can write
\begin{align} \label{discrete2}
p(x,D_x)u &= \sum_{j\in \N} \left\langle P_j {U_j}(t), {e^j}(x)  \right\rangle_{\C^{d_j}}, \mbox{ and} \nonumber \\[2mm]
q(x,D_x)u &= \sum_{j\in \N} \left\langle Q_j {U_j}(t), {e^j}(x)  \right\rangle_{\C^{d_j}},
\end{align}
where $P_j$ and $Q_j$ are the complex self-adjoint $d_j \times d_j$ matrices of $p_j$ and $q_j$, with respect to the orthonormal basis of $E_{\sigma_j}$ given in \eqref{basis-E-sigmaj} and
\begin{equation}\label{system-Uj}
U_j(t)=\big(u_{k}^j(t)\big)_{d_j\times 1}, \ \mbox{ and } \ e^j=\big(e^{j}_k(x)\big)_{d_j\times 1}, \ \mbox{ for any } \ j \in \N.
\end{equation}

With this notation, $Lu = f$ is equivalent to the following sequence of differential equations:
\begin{equation}\label{system-1}
D_t {U_j}(t) + C_j(t) {U_j}(t) = {F_j}(t), \ j \in \N, \ t \in \T,
\end{equation}
where
\begin{equation}\label{system-C-Fj}
C_j(t) = a(t) P_j + i b(t) Q_j \ \mbox{ and } \ F_j(t) = \big(f_k^{j}(t)\big)_{d_j\times 1}.
\end{equation}

Now, we recall the following lemma of linear algebra.

\begin{lemma}\label{simult-diag.}
Let $\{T_{\alpha}:V \to V, \alpha \in \Lambda\}$ be a family of diagonalizable linear operators defined on a finite dimensional vector space $V$ such that
$[T_{\alpha}, T_{\beta} ] = 0, \forall \alpha, \beta \in \Lambda$.
Thus, there exists an ordered basis of $V$ in which any operator of this family has a diagonal representation. Moreover, if every $T_\alpha$ is normal, then there is an unitary matrix $S$ satisfying
\begin{equation*}
[T_{\alpha}] = S D_{T_\alpha} S^{*}, \ \forall \ {\alpha} \in \Lambda,
\end{equation*}
where  $D_{T_\alpha}$ is the diagonal matrix of the eigenvalues of $T_{\alpha}$.
\end{lemma}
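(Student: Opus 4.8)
The plan is to proceed by induction on $\dim V$, using the standard argument that commuting operators preserve each other's eigenspaces. First I would dispose of the trivial cases: if $\dim V \leqslant 1$, or if every $T_\alpha$ is a scalar multiple of the identity, there is nothing to prove. Otherwise, pick some $T_{\alpha_0}$ that is not a scalar operator; since it is diagonalizable, $V = \bigoplus_{i} V_i$ where the $V_i$ are the (proper, nontrivial) eigenspaces of $T_{\alpha_0}$. The key step is that for any $\beta \in \Lambda$, the relation $[T_{\alpha_0}, T_\beta] = 0$ forces $T_\beta(V_i) \subset V_i$: if $T_{\alpha_0} v = c v$ then $T_{\alpha_0}(T_\beta v) = T_\beta(T_{\alpha_0} v) = c\,(T_\beta v)$. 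Thus every member of the family restricts to an operator on each $V_i$, and the restricted family still consists of pairwise commuting operators. One must also check that the restriction $T_\beta|_{V_i}$ is still diagonalizable: this follows because the minimal polynomial of $T_\beta|_{V_i}$ divides that of $T_\beta$, which has distinct roots. Applying the induction hypothesis to each $V_i$ (of strictly smaller dimension) yields a basis of $V_i$ simultaneously diagonalizing all $T_\beta|_{V_i}$; concatenating these over $i$ gives the desired basis of $V$.

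For the second assertion, suppose in addition that every $T_\alpha$ is normal. I would run the same induction but keeping track of orthogonality. Choose $T_{\alpha_0}$ non-scalar; being normal, it admits an orthogonal decomposition $V = \bigoplus_i V_i$ into its eigenspaces. For a normal operator $N$ commuting with $T_{\alpha_0}$, one checks that $N$ also commutes with $T_{\alpha_0}^*$ (by Fuglede's theorem, or directly in the finite-dimensional setting), hence $N$ preserves each $V_i$ and $N|_{V_i}$ is again normal, so diagonalizable by an orthonormal basis of $V_i$. Since the $V_i$ are mutually orthogonal, assembling orthonormal bases of the $V_i$ produces an orthonormal basis of $V$; the change-of-basis matrix $S$ to this basis is unitary, and in it every $T_\alpha$ is represented by the diagonal matrix $D_{T_\alpha}$ of its eigenvalues, i.e. $[T_\alpha] = S D_{T_\alpha} S^*$.

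The main obstacle, and the only nonroutine point, is justifying that a normal operator commuting with $T_{\alpha_0}$ also commutes with $T_{\alpha_0}^*$ — this is needed so that the family still consists of normal operators after restriction to the eigenspaces of $T_{\alpha_0}$. In finite dimensions this is elementary: writing $T_{\alpha_0} = \sum_i c_i \Pi_i$ with $\Pi_i$ the orthogonal projections onto its eigenspaces, commuting with $T_{\alpha_0}$ is equivalent to commuting with each $\Pi_i$ (since the $\Pi_i$ are polynomials in $T_{\alpha_0}$), and $T_{\alpha_0}^* = \sum_i \overline{c_i}\,\Pi_i$ is likewise a polynomial in $T_{\alpha_0}$, so commutes with $N$ as well. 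Everything else is the familiar "commuting preserves eigenspaces / restrictions stay diagonalizable" bookkeeping, carried through the induction on dimension.
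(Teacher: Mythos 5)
Your proof is correct. Note that the paper does not supply a proof of this lemma at all — it states it as a recalled fact of linear algebra and immediately applies it to the pairs $\{P_j, Q_j\}$. So there is no paper argument to compare against; you are simply filling in a standard textbook proof. Your induction on $\dim V$, using that commuting operators preserve each other's eigenspaces and that restrictions of diagonalizable operators remain diagonalizable (via the minimal polynomial), is the canonical route. In the normal case, the only genuinely non-routine point — that $N|_{V_i}$ is still normal, which requires $N^*$ to preserve $V_i$ as well — is correctly handled by your observation that the spectral projections $\Pi_i$ of the normal $T_{\alpha_0}$ are self-adjoint polynomials in $T_{\alpha_0}$, so $N\Pi_i = \Pi_i N$ forces $N^*\Pi_i = \Pi_i N^*$ upon taking adjoints. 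This is the finite-dimensional kernel of Fuglede's theorem, exactly as you say, and it closes the induction cleanly.
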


For each $j \in \N$, the family $\{P_j,Q_j\}$ satisfies the hypothesis of lemma \ref{simult-diag.}, since $P_j^* = P_j$, $Q_j^* = Q_j$ and
$[p(x,D_x), q(x,D_x)] = 0$ is equivalent to
\begin{equation*}
  [P_j, Q_j ] = 0, \ \forall j \in \N.
\end{equation*}

Therefore, for each $j \in \N$, there exists an ordered basis of $E_{\sigma_j}$ such that
\begin{equation}\label{diag-simult-1}
S^*_j P_j S_j =   D_{P_j}  \ \textrm{ and } \ S^*_j Q_j S_j =   D_{Q_j},
\end{equation}
where each $S_j$ is a unitary matrix and
\begin{equation}\label{diag-simult-2}
D_{P_j} = \mbox{diag} \left ( \mu_{j,1},   \ldots,  \mu_{j,{d_j}} \right) \ \textrm{ and } \
D_{Q_j} = \mbox{diag} \left ( \nu_{j,1},   \ldots,  \nu_{j,{d_j}} \right)
\end{equation}
are diagonal matrices.

\smallskip
Now, if we write
\begin{equation}\label{V-G-of-diagonal-system}
{V_j}(t) \doteq S_j^* {U_j}(t) \ \textrm{ and } \ {G_j}(t) \doteq S_j^* {F_j}(t),
\end{equation}
where  ${U_j}(t)$ and ${F_j}(t)$ are defined in \eqref{system-Uj} and \eqref{system-C-Fj}, then we can rewrite the sequence of differential equations \eqref{system-1} as
\begin{equation}\label{diagonal-system}
D_t {V_j}(t) + C_j(t) {V_j}(t) = {G_j}(t), \ j \in \N,
\end{equation}
where $C_j(t) = a(t) D_{P_j} + i b(t)D_{Q_j}$.

Observe that the study of the behaviour of the solutions $U_j$ is equivalent to the study of $V_j$. Indeed, since $S_j$ is unitary we have
\begin{eqnarray*}
\left\| \partial^{k}_t {V_j}(t) \right\|^2_{\C^{d_j}} &=& \left\| S_j^*\cdot \partial^{k}_t {U_j}(t) \right\|^2_{\C^{d_j}} \\[2mm]
                              &=& \left\langle S_j\cdot S_j^*\partial^{k}_t {U_j}(t),\partial^{k}_t {U_j}(t) \right\rangle_{\C^{d_j}} \\[2mm]
                              &=& ||\partial^{k}_t {U_j}(t)||^2_{\C^{d_j}}.
\end{eqnarray*}

In particular, when $f \in C^{\infty}(\T \times M)$, the sequences $\{F_{j}(t)\}$ and $\{G_{j}(t)\}$ satisfy the condition
\eqref{2-prop-smooth-2}.

Then, the system \eqref{diagonal-system} is equivalent to the sequence of differential equations
\begin{equation}\label{entradas-eq}
D_t v_{\ell}^j(t) + c_{\ell}^j(t)v_{\ell}^j(t) = g_{\l}^j(t), \ j \in \N,
\end{equation}

\noindent with $c_{\ell}^j(t) = a(t) \mu_{\ell}^j + i b(t)\nu_{\ell}^j$, for each $\ell \in \{1, \ldots, d_j\}$.

If the solutions $V_j(t)$ of \eqref{diagonal-system} satisfy an estimate as
\begin{equation*}
\left\| \partial^{k}_t {V_j}(t) \right\|^2_{\C^{d_j}} \leqslant \ C j^N, \ j \to \infty,
\end{equation*}
then each $v_{\ell}^j(t)$ satisfies itself, and reciprocally.

It follows from this discussion that the global hypoellipticity of the operator $L$ is equivalent to the study of the solutions of the equations
\eqref{entradas-eq}. In this sense, it is enough to consider the case where the multiplicity of all eigenvalues is exactly equal to one
(simple eigenvalues).

\begin{remark} \label{obs-comuta}
To obtain the diagonal system \eqref{diagonal-system} we use lemma \ref{simult-diag.}, which requires the commutation $[P_j, Q_j] = 0$, for each $j\in \N$; thus in this point we stress the use of hypothesis
\begin{equation*}
[p(x,D_x), q(x,D_x)] =0.
\end{equation*}
\end{remark}

\subsection{(GH) for the diagonal form \label{GHT}}

We start by fixing a normal elliptic operator $E(x,D_x)\in \Psi^1_{el}(M)$, with spectrum
\begin{equation}\label{hypsp1bis}
\spec (E)  = \{ 0 < \lambda_1 \leqslant\lambda_2 \leqslant\ldots \lambda_j \leqslant\ldots \to +\infty\},
\end{equation}
where all eigenvalues $\lambda_j$ are simple, and the corresponding orthonormal basis is $\{\varphi_j\}_{j \in \N}$ for $L^2(M)$.
In this situation, all eigenspaces $E_{\lambda_j}$ have dimension 1.

\smallskip
Let $p(x,D_x),q(x,D_x) \in \Psi^1(M)$ be self-adjoint operators that commute with $E(x,D_x)$, namely
\begin{equation*}\label{pq-commutes-with-Ex}
     [E(x,D_x), p(x,D_x)] = 0 \ \ \mbox{ and } \ \ \ [E(x,D_x), q(x,D_x)] = 0,
\end{equation*}
and let $L$ be the operator
\begin{equation}\label{L-dim-1}
L \doteq D_t + a(t)p(x,D_x) + ib(t)q(x,D_x), \quad (t,x) \in \T\times M,
\end{equation}
with $a,b \in C^\infty(\mathbb{T})$ and set
\begin{equation}\label{a0-b0}
a_0 =  (2\pi)^{-1} \int_{0}^{2\pi}a(\tau) d\tau, \quad  b_0 = (2\pi)^{-1} \int_{0}^{2\pi}b(\tau) d\tau.
\end{equation}

\smallskip

Since $p(E_{\lambda_j}) \subset E_{\lambda_j}$, $q(E_{\lambda_j}) \subset E_{\lambda_j}$ and $\dim(E_{\lambda_j})=1,$ for any $j \in \N$,
there exist sequences of real numbers $\{\mu_j\}$ and $\{\nu_j\}$ such that
\begin{equation}\label{seqmunu}
p(x,D_x)\varphi_j = \mu_j \varphi_j \ \textrm{ and } \ q(x,D_x)\varphi_j = \nu_j \varphi_j, \ \ j \in \N.
\end{equation}

We point out that the behavior at the infinity of these sequences, play a decisive role in the study of the regularity of the operator $L$. \

\begin{definition}
We say that $a_0$ is non-Liouville with respect to the sequence $\{\mu_j\}$, if there exists $\delta \geqslant 0$, $C>0$ and $R\gg 1$ such that
\begin{equation}\label{nonLiouville}
\inf_{\l \in \Z} |a_0\mu_j + \l| \geqslant C j^{-\delta}, \ \forall j \geqslant R.
\end{equation}
If \eqref{nonLiouville} does not hold, we say that $a_0$ is Liouville with respect to the sequence $\{\mu_j\}$.
\end{definition}

\begin{definition}
The set of resonances of $a_0$ with respect to the sequence $\{\mu_j\}$ is defined by
\begin{equation}\label{nonres1}
\Gamma_{a_0} = \{ j \in \N; \ a_0 \mu_j \in \Z\}.
\end{equation}
\end{definition}

\begin{definition}\label{GH-definition}
   The operator $L$ is said to be globally hypoelliptic on $\T \times M$ (GH) if the conditions $u \in \D(\T \times M)$  and
$Lu \in {C}^\infty(\T \times M)$  imply $u \in C^\infty(\T \times M)$.
\end{definition}

Now we are in the position to enunciate our main results on the case of the separation of variables and simple eigenvalues.

\begin{theorem}\label{main-thm-dim-1}
Let $L$ be the operator defined in \eqref{L-dim-1}, and suppose that
\begin{equation*}
\lim_{j \to \infty} |\nu_j| = \infty.
\end{equation*}
Then:
  \begin{enumerate}\itemsep2pt
        \item[i.] if $b\equiv 0$, then $L$ is (GH) if, and only if, the resonance set $\Gamma_{a_0}$ is finite and $a_0$ is non-Liouville with respect to the sequence $\{\mu_j\}$;

        \item[ii.] if $b$ does not change sign and $b$ is not identical to zero, then $L$ is (GH);

    \item[iii.] if $b$ changes sign, then $L$ is not (GH), provided that there is a subsequence $\{\nu_{j_k}\}$ such that
       \begin{equation}\label{onboundhyp}
             \lim_{k \to \infty} \dfrac{|\nu_{j_k}|}{\log (j_k)} = + \infty.
       \end{equation}
  \end{enumerate}
However, if
   \begin{equation}\label{boundhyp}
                         \limsup_{j \to \infty} \dfrac{|\nu_{j}|}{\log (j)} = \kappa < + \infty,
   \end{equation}
we have:
\begin{center}
$L$ is (GH) \ if, and only if, \ $L_{a_0,b_0} \doteq D_t + a_0p(x,D_x) + ib_0 q(x,D_x)$ is (GH),
\end{center}
namely
\begin{enumerate}
                    \item[a.] if $b_0\neq 0$, then $L_{a_0,b_0}$ is (GH);

              \item[b.] if $b_0=0$, $L_{a_0,b_0}$ is (GH) if, and only if, $\Gamma_{a_0}$ is a finte set and $a_0$ is non-Liouville with respect to the sequence $\{\mu_j\}$.
\end{enumerate}
\end{theorem}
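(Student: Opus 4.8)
The plan is to reduce everything, via the discretization from Section~3.1, to the family of scalar periodic ODEs
\begin{equation*}
D_t v_j(t) + \bigl(a(t)\mu_j + ib(t)\nu_j\bigr) v_j(t) = g_j(t), \qquad j \in \N, \ t \in \T,
\end{equation*}
and to characterize, in terms of $\{\mu_j\}$, $\{\nu_j\}$, $a$, $b$, when the solution operator $g_j \mapsto v_j$ is ``temperate'', i.e.\ maps sequences satisfying \eqref{2-prop-smooth-2} to sequences satisfying \eqref{2-prop-smooth-2}, with the $\D$/$C^\infty$ characterizations of Propositions~\ref{prop-smooth-1} and \ref{prop-smooth-2matr} translating this into exactly the (GH) property of $L$. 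The standard fact to recall first is the explicit integrating-factor formula: writing $c_j(t) = a(t)\mu_j + ib(t)\nu_j$ and $C_j(t) = \int_0^t c_j(s)\,ds$, the equation on $\T$ is solvable with temperate solution precisely when $e^{-2\pi i \, \widehat{c_j}(0)} \neq 1$ for the zeroth Fourier coefficient $\widehat{c_j}(0) = a_0\mu_j + ib_0\nu_j$ (the nonresonant case), in which case
\begin{equation*}
v_j(t) = \frac{1}{e^{-2\pi i\, \widehat{c_j}(0)} - 1} \int_0^{2\pi} e^{i\int_{t}^{t+s} c_j(r)\,dr}\, g_j(t+s)\, ds,
\end{equation*}
and when $\widehat{c_j}(0) \in \Z$ (the resonant case) solvability requires a compatibility condition on $g_j$ and the homogeneous solution $e^{-iC_j(t)}$ must itself be tested for smoothness. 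So the whole theorem becomes a matter of estimating $|v_j(t)|$ and its $t$-derivatives against powers of $j$, using Weyl's formula (Lemma~\ref{Weyl}) to convert between ``powers of $j$'' and ``powers of $\lambda_j$''.

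Next I would dispatch the cases in order. For (i), $b\equiv 0$: $c_j(t) = a(t)\mu_j$ is real, so $\widehat{c_j}(0) = a_0\mu_j$; the resonant set is $\Gamma_{a_0}$, and for $j\notin\Gamma_{a_0}$ the denominator $|e^{-2\pi i a_0\mu_j}-1| \asymp \inf_{\ell\in\Z}|a_0\mu_j+\ell|$, so temperateness of $v_j$ is controlled by a lower bound $\inf_\ell|a_0\mu_j+\ell| \geq Cj^{-\delta}$ — precisely non-Liouvilleness; meanwhile if $\Gamma_{a_0}$ is infinite one builds a singular $u$ supported on the resonant modes (the homogeneous solutions $e^{-ia_0\mu_j t}\varphi_j$ with unit coefficients give $u\in\D\setminus C^\infty$ with $Lu=0$), and if it is finite but $a_0$ is Liouville one picks a subsequence realizing the failure of \eqref{nonLiouville} and inflates the corresponding coefficients to manufacture a non-smooth solution. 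This is the classical Greenfield--Wallach/Hounie argument. For (ii), $b\geq 0$, $b\not\equiv0$: here $\mathrm{Re}\bigl(i\int_t^{t+s}c_j\bigr) = -\nu_j\int_t^{t+s}b(r)\,dr$; when $\nu_j>0$ one integrates the exponential formula forward, when $\nu_j<0$ backward, so that the exponent is always $\leq 0$ along the chosen contour, giving $|v_j(t)|\leq C\sup|g_j|$ uniformly — no small divisors at all, using $|\nu_j|\to\infty$ to absorb the finitely many $j$ where $\int_0^{2\pi}b = 0$ could be an issue. The point is that a non-vanishing mean of $b$ with a definite sign makes the solution operator bounded on $L^\infty(\T)$, and differentiating the integral formula in $t$ only brings down factors of $c_j'(t)$, i.e.\ $O(|\mu_j|+|\nu_j|) = O(j^{1/n})$ by Weyl, which is harmless.

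The delicate parts are (iii) and the dichotomy governed by \eqref{onboundhyp} versus \eqref{boundhyp}. When $b$ changes sign, along any contour the real part of the exponent $-\nu_j\int_t^{t+s}b$ necessarily becomes positive somewhere, of size comparable to $|\nu_j|\cdot\max_{\tau}\bigl|\int_0^\tau b\bigr|$; if $|\nu_{j_k}|/\log j_k\to\infty$ along a subsequence this produces an amplification factor $e^{c|\nu_{j_k}|}$ that beats every polynomial $j_k^N$, and by choosing $g_{j_k}$ suitably (or taking $g\equiv 0$ and exploiting resonances/near-resonances) one exhibits $u\in\D$ with $Lu\in C^\infty$ but $u\notin C^\infty$ — so $L$ is not (GH). Conversely, under \eqref{boundhyp} the amplification is at most $e^{c\kappa\log j} = j^{c\kappa}$, polynomial, hence \emph{harmless for the $\D\to\D$ direction but not automatically for the $C^\infty\to C^\infty$ direction}; the claim is that in this regime the $\nu_j$-part of the phase contributes only a bounded (in the temperate sense) perturbation, so that $L$ is (GH) iff the ``frozen-coefficient'' operator $L_{a_0,b_0}$ is, which by the $b_0\neq0$ case behaves like (ii) (the constant nonzero imaginary mean kills all divisors) and by the $b_0=0$ case reduces to (i) with $a_0$ against $\{\mu_j\}$. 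I expect the main obstacle to be this last equivalence: showing rigorously that replacing $a(t),b(t)$ by their means $a_0,b_0$ does not change global hypoellipticity when $|\nu_j|$ grows at most logarithmically. This needs a careful perturbative comparison of the two integrating factors — writing $a(t) = a_0 + \tilde a(t)$, $b(t) = b_0 + \tilde b(t)$ with $\tilde a,\tilde b$ having zero mean, hence bounded primitives — and controlling $\exp\bigl(-\nu_j\int \tilde b\bigr)$ by $e^{O(|\nu_j|)} = j^{O(\kappa)}$, which is polynomially bounded and so absorbable, while simultaneously checking that the resonance/nonresonance classification for $\widehat{c_j}(0) = a_0\mu_j + ib_0\nu_j$ is unaffected (when $b_0\neq 0$ there are \emph{no} resonances for large $j$ since $\mathrm{Im}\,\widehat{c_j}(0) = b_0\nu_j\to\pm\infty$, which is why case (a) is unconditional). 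Assembling the three cases plus the dichotomy, together with the reduction of Section~3.1 and Propositions~\ref{prop-smooth-1}--\ref{prop-smooth-2matr}, yields the theorem.
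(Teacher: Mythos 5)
Your proposal follows essentially the same route as the paper: discretize to scalar ODEs on $\T$, use the explicit integrating-factor formula and choose forward or backward contours according to the sign of $\nu_j$, conjugate away the time-dependent coefficients under logarithmic growth (the $\Psi_a,\Psi_b$ automorphisms of Section~4), bound the small divisor $|1-e^{-2\pi c_j^0}|$ via the elementary sine/cosine inequality of Proposition~\ref{prop-liou} in the $b\equiv 0$ case, and build a cutoff-times-exponential singular solution under change of sign and super-logarithmic growth. The two points you leave most compressed are exactly the ones the paper spells out with care: the boundedness of the normalizing factors $\Theta_j$ and $\Theta_j e^{2\pi b_0\nu_j}$ (Proposition~\ref{prop-theta}) needed to close case (ii), and the change-of-sign Lemma~\ref{lamma-chang.sig} pinning down $t^*,t_*$ and the intervals of strict sign for the primitives, which makes the singular-solution construction in (iii) actually produce $Lu\in C^\infty(\T\times M)$ rather than merely $Lu\in\D(\T\times M)$ — but the strategy is identical.
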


\subsection{Logarithmic growth and Sobolev spaces \label{example-intro}}

The purpose of this subsection is to establish a parallel between theorem \ref{main-thm-dim-1} above, and the results obtained by J. Hounie \cite{HOU79}, in the case where the growth of the sequence $\{\nu_j\}$ is at most logarithmic, that is, \eqref{boundhyp} holds.

We start by recalling the following example given in the introduction:
\begin{equation}\label{examplo-hounie-fail}
     L_\rho = D_t + a(t)D_x + ib(t)\log^\rho(2+|D_x|), \quad (t,x)\in \T\times\T,
\end{equation}
where $\rho>0$ and $a,b \in C^\infty(\T)$.

In the case that $b\not \equiv 0,$ our results imply that:
\begin{enumerate}
  \item[$i.$] if $\rho>1$, the operator $L_\rho$ is (GH) if, and only if, $b(t)$ does not change sign;
  \item[$ii.$] if $\rho \leqslant 1$, the operator $L_\rho$ is (GH) if, and only if, either $b_0\neq 0$, or $b_0=0$ and $a_0$ is an irrational non-Liouville number;
\end{enumerate}

Thus, the operator $L_\rho$ may be $C^\infty\!$--global hypoelliptic even when the function $b$ changes sign, that is, we are able to obtain examples in which the famous condition $(\mathcal{P})$ of Nierenberg-Treves fails, see \cite{NT70,NT71,T70}, and the first-order operator $L$ is $C^\infty$--global hypoelliptic.

We highlight that J. Hounie's abstract result in \cite{HOU79} could not be applied for the study of $C^\infty\!$--global hypoellipticity if
$a \equiv 0$ and $b \not \equiv 0$.

If $b \equiv 0$ our general result recaptures the theorem of J. Hounie on our example with $\H^0(\T)=L^2(\T)$, $\H^\infty(\T)=C^\infty(\T)$ and $\H^{-\infty}=\D(\T)$.

But, in the case $a\equiv 0$ and $b \not \equiv 0$  we have
\begin{equation}\label{example-hounie-fail2}
     \tilde{L}_\rho = D_t + ib(t)\log^\rho(2+|D_x|), \quad (t,x)\in \T\times\T,
\end{equation}
where $\rho>0$ and $b \in C^\infty(\T)$.

For this, consider the following self-adjoint pseudo-differential operator defined on the one-dimensional torus $\T$:
\begin{equation*}
  Q(x,D_x)= \log^\rho(2+|D_x|), \  \rho > 0,
\end{equation*}
and, following the ideas of J. Hounie, in \cite{CH77} and \cite{HOU79}, consider the scale of Sobolev spaces $\H^s_Q$ defined by $Q$, that is, each $\H^s_Q$ is the space of elements $u$ of $\D(\T)$ such that $Q^s u \in L^2(\T)$ or, equivalently,
\begin{equation*}
\H^s_Q = \{u \in \D(\T); \log^{\rho s}(2+|\xi|)\widehat{u}(\xi) \in \ell^2(\Z) \}, \ s \in \R.
\end{equation*}

We also denote
\begin{equation*}
\H^\infty_Q = \bigcap_{s\in \R}\H^s_Q \ \mbox{ \ and \ } \ \H^{-\infty}_Q = \bigcup_{s\in \R}\H^s_Q,
\end{equation*}
\noindent and
$$
  \H^\veps (\T) \mbox{ is the standard Sobolev space of order } \veps \in \R. \vspace{5mm}
$$

\begin{proposition}
$\H^\infty_Q(\T) \neq C^\infty(\T)$
\end{proposition}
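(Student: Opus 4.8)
The strategy is to separate two decay rates for Fourier coefficients. A distribution $u$ lies in $C^\infty(\T)$ exactly when $\widehat u$ is rapidly decreasing, i.e. $\widehat u(\xi)=\O((1+|\xi|)^{-N})$ for every $N$, whereas $u\in\H^\infty_Q(\T)=\bigcap_{s\in\R}\H^s_Q(\T)$ only requires the much weaker \emph{super-logarithmic} smallness $\sum_{\xi\in\Z}\log^{2\rho s}(2+|\xi|)\,|\widehat u(\xi)|^{2}<\infty$ for every $s\in\R$. Any rapidly decreasing sequence obviously meets the latter, so $C^\infty(\T)\subseteq\H^\infty_Q(\T)$ always holds, and it is enough to exhibit one element of $\H^\infty_Q(\T)$ that is not smooth. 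The natural device is a lacunary Fourier series whose coefficients decay faster than every power of $\log(2+|\xi|)$ but not faster than $|\xi|^{-N}$ for some $N$.

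Concretely I would take
\[
u(x)\doteq\sum_{k=1}^{\infty}2^{-k}\,e^{i2^{k}x},\qquad\text{i.e.}\qquad
\widehat u(\xi)=\begin{cases}2^{-k},&\xi=2^{k},\ k\geqslant 1,\\0,&\text{otherwise.}\end{cases}
\]
Since $\sum_{k}4^{-k}<\infty$ we have $u\in L^{2}(\T)\subset\D(\T)$. To check $u\in\H^{\infty}_{Q}(\T)$, fix $s\in\R$ and use $\log(2+2^{k})=k\log 2+\O(1)$, whence $\log^{2\rho s}(2+2^{k})=\O(k^{2\rho s})$; therefore
\[
\sum_{\xi\in\Z}\log^{2\rho s}(2+|\xi|)\,|\widehat u(\xi)|^{2}
=\sum_{k\geqslant 1}4^{-k}\,\log^{2\rho s}(2+2^{k})
\leqslant C_{s}\sum_{k\geqslant 1}4^{-k}k^{2\rho s}<\infty,
\]
the last series converging by the ratio test (and for $s\leqslant 0$ the weight is even bounded). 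As $s$ is arbitrary, $u\in\H^{\infty}_{Q}(\T)$. On the other hand, at the frequency $\xi=2^{k}$ one has $(1+|\xi|)^{2}|\widehat u(\xi)|=(1+2^{k})^{2}2^{-k}\to\infty$ as $k\to\infty$, so $\widehat u$ is not rapidly decreasing and $u\notin C^{\infty}(\T)$. Hence $\H^{\infty}_{Q}(\T)\neq C^{\infty}(\T)$.

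The only delicate point — and it is a mild one — is the calibration of the three scales appearing in the construction: the frequencies $n_{k}=2^{k}$, the coefficients $2^{-k}$, and the logarithmic weights $\log^{2\rho s}(2+n_{k})\approx k^{2\rho s}$. A subexponential choice of frequencies would make these weights grow too slowly to impose any real restriction, while coefficients decaying like $2^{-Nk}$ or $1/k!$ would already be rapidly decreasing in $\xi$ and would land back in $C^{\infty}(\T)$; the pair (frequencies $2^{k}$, coefficients $2^{-k}$) sits precisely in the window where the series looks smooth on every $\log$-scale yet fails to be genuinely smooth. Once this balance is fixed, both verifications collapse to summing a geometric series against a polynomial factor, so there is no substantive obstacle beyond getting these growth orders right. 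If one wishes $u$ to be real-valued, it suffices to also set $\widehat u(-2^{k})=2^{-k}$, which affects neither estimate.
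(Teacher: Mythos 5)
Your proof is correct, and it is genuinely simpler than the paper's. The paper chooses a non-lacunary example $\widehat u(\xi)=\psi(\xi)\,e^{-\log^\delta(|\xi|)\log\log|\xi|}$ with $\psi(\xi)=|\xi|^{-1/2}\log^{-\theta}(|\xi|)$, $\theta>1/2$, $\delta\in(0,1)$, and deduces the stronger fact $\H^\infty_Q(\T)\not\subset\H^\veps(\T)$ for \emph{every} $\veps>0$; that stronger statement is what drives the ensuing corollary on $\T^n$. Your lacunary series $\widehat u(2^k)=2^{-k}$ gives the proposition as stated with essentially no computation (a geometric series against a polynomial in $k$), but it does not deliver the paper's refinement: since $\sum_k 2^{2ks}4^{-k}<\infty$ iff $s<1$, your $u$ lies in $\H^s(\T)$ for all $s<1$, so it only witnesses $\H^\infty_Q(\T)\not\subset\H^1(\T)$. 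Thus, the trade-off is transparency and brevity (yours) versus the sharper quantitative conclusion (the paper's). One small remark: rather than invoking ``$\widehat u$ is not rapidly decreasing'' it would be marginally cleaner, in the spirit of the surrounding text, to note directly that $\sum_k |2^k|^{2}|\widehat u(2^k)|^2=\sum_k 1=\infty$, so $u\notin\H^1(\T)\supset C^\infty(\T)$; this matches the criterion the paper uses for $\H^s$.
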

\begin{proof}
We will show that for any $\veps >0$, $\H^\infty_Q(\T) \not \subset \H^{\veps} (\T)$.  Let  $\theta>1/2$ and set
\begin{equation*}
\psi(\xi) = |\xi|^{-1/2}\log^{-\theta}(|\xi|), \ \xi \in \Z, \ |\xi|\gg 1.
\end{equation*}

Note that

\begin{equation*}
\int_{ |\xi| \geqslant R}  \frac{1}{|\xi| \log^{2\theta } (|\xi|)} d\xi \sim
\int_R^{+\infty} \frac{1}{\rho\log^{2\theta } \rho} d\rho = \frac{1}{ (2\theta -1) \log^{2\theta -1 } (R) } < +\infty,
\end{equation*}

\noindent then $\{\psi (\xi)\}_{\xi \in \Z} \in  \ell^2(\Z)$.

Now, fix $\delta \in (0,1)$ and define
\begin{equation*}
\widehat{u}(\xi) = \left \{
\begin{array}{l}
\psi(\xi)e^{-\log^{\delta}(|\xi|) \log(\log(|\xi|))}, \ \textrm{ if } \xi \in \Z \setminus\{0\}, \\[3mm]
0, \ \ \textrm{ if } \xi=0.
\end{array} \right.
\end{equation*}

For  each $s>0$ we obtain
\begin{align}\label{ine-1}
\log^{s\rho} (|\xi|)\widehat{u} (\xi)  = & \psi (\xi) e^{\rho s \log (\log |\xi|)-\log ^\delta (|\xi| ) \log (\log |\xi|)) } \nonumber \\[2mm]
                                       = & \psi (\xi)e^{- (\log^\delta (|\xi| ) -\rho s) \log (\log |\xi|)  }, \quad |\xi| \gg 1.
\end{align}

For any $N>0$ there exists $R=R(N, \rho s)>0$, such that
\begin{equation*}
N < \log^{\delta}(|\xi|) - \rho s, \ |\xi| \geqslant R,
\end{equation*}
thus, $\forall |\xi| \geqslant R$
\begin{align*}
e^{-(\log^\delta (|\xi| ) - \rho s)  \log(\log |\xi|) }  \leqslant e^{- N  \log (\log |\xi|) } = (\log |\xi|)^{-N}.
\end{align*}

From \eqref{ine-1} we have
\begin{equation*}
\log^{s\rho} (|\xi|) \widehat{u} (\xi) \leqslant  \psi (\xi) (\log |\xi|)^{-N} \leq \psi (\xi), \ \ |\xi| \geq R,
\end{equation*}
and then $\{\widehat{u} (\xi)\}_{\xi \in \Z}$ defines a distriuition $u \in  \H^\infty_Q(\T)$.

Once $\delta <1$, it follows that
\begin{equation*}
\lim_{|\xi| \to \infty } \frac{\log^\delta (|\xi|) \log(\log |\xi|)}{\log (|\xi|)} = 0,
\end{equation*}
then for each $\varepsilon>0$:, there exist $R'>0$ such that
\begin{equation*}
\log^\delta (|\xi|) \log(\log |\xi|) \leqslant \varepsilon/2 \log (|\xi|), \ \forall |\xi| \geqslant R'.
\end{equation*}

Thus, for $|\xi|\geqslant R'$, we obtain
\begin{align*}
|\xi|^{\varepsilon}  \widehat{u}  (\xi) & =  \psi (\xi) |\xi|^\varepsilon e^{-\log ^\delta (|\xi|) \log (\log (|\xi|))) } \\[2mm]
        & =  \psi (\xi)e^{ \varepsilon \log (|\xi|) -\log ^\delta (|\xi|) \log (\log (|\xi|)) } \\[2mm]
        & \geqslant \psi (\xi)e^{ \varepsilon \log (|\xi|) - \varepsilon/2 \log (|\xi|)} \\[2mm]
        & =  \psi (\xi)e^{ \varepsilon/2 \log |\xi|} \\[2mm]
        & =   |\xi|^{-n/2 + \varepsilon/2} \log^{-\theta} (|\xi|).
\end{align*}

Since  $\{|\xi|^{-1/2 + \varepsilon/2} \log^{-\theta} (|\xi|)\}_{\xi \in \Z} \notin \ell^2(\Z)$, we obtain
\begin{equation*}
\{|\xi|^{\varepsilon} \widehat{u} (\xi)\}_{\xi \in \Z} \not\in \ell^2 (\Z),
\end{equation*}
and thus $\H^\infty _Q(\T) \not \subset \H^{\varepsilon} (\T)$.

\end{proof}

\begin{corollary}
If we set
\begin{equation*}
  Q(x,D_x)= \log^\rho(2+|D_x|), \ \ \rho > 0, x \in \T^n,
\end{equation*}
then, for every $\epsilon>0$, $\H^\infty _Q(\T^n) \not \subset \H^{\varepsilon} (\T^n)$.
\end{corollary}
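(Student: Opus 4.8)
The plan is to reduce the $n$-dimensional statement to the one-dimensional Proposition proved above, by extending the distribution constructed there to one on $\T^n$ that depends on a single coordinate. Recall that the scale $\H^s_Q(\T^n)$ is defined exactly as for $n=1$: $u\in\H^s_Q(\T^n)$ iff $\{\log^{\rho s}(2+|\xi|)\,\widehat u(\xi)\}_{\xi\in\Z^n}\in\ell^2(\Z^n)$, $\H^\infty_Q(\T^n)=\bigcap_{s}\H^s_Q(\T^n)$, and $\H^\veps(\T^n)$ is the usual Sobolev space. Let $u_1\in\D(\T)$ be the distribution produced in the proof of the Proposition, so that $u_1\in\H^\infty_Q(\T)$ while $\{|\xi_1|^\veps\,\widehat{u_1}(\xi_1)\}_{\xi_1\in\Z}\notin\ell^2(\Z)$. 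Writing $\xi=(\xi_1,\xi')\in\Z\times\Z^{n-1}$, I would set
\[
\widehat u(\xi_1,\xi')=\widehat{u_1}(\xi_1)\ \text{ if }\xi'=0,\qquad \widehat u(\xi_1,\xi')=0\ \text{ if }\xi'\neq0.
\]
Since $u_1\in\D(\T)=\bigcup_{s}\H^s(\T)$ its coefficients are at most polynomially growing, so $u$ is a well-defined distribution on $\T^n$ (in fact $u\in L^2(\T^n)$, as $u_1\in L^2(\T)$).

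The key elementary observation is that on $\supp\widehat u=\{\xi'=0\}$ one has $|\xi|=|\xi_1|$, hence $2+|\xi|=2+|\xi_1|$ there, and moreover a sequence supported on this coordinate line belongs to $\ell^2(\Z^n)$ precisely when its one-variable restriction belongs to $\ell^2(\Z)$. Consequently, for every $s\in\R$,
\[
\sum_{\xi\in\Z^n}\big|\log^{\rho s}(2+|\xi|)\,\widehat u(\xi)\big|^2=\sum_{\xi_1\in\Z}\big|\log^{\rho s}(2+|\xi_1|)\,\widehat{u_1}(\xi_1)\big|^2<+\infty,
\]
because $u_1\in\H^s_Q(\T)$; thus $u\in\H^\infty_Q(\T^n)$. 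On the other hand,
\[
\sum_{\xi\in\Z^n}(1+|\xi|^2)^{\veps}\,|\widehat u(\xi)|^2=\sum_{\xi_1\in\Z}(1+|\xi_1|^2)^{\veps}\,|\widehat{u_1}(\xi_1)|^2=+\infty,
\]
since $u_1\notin\H^\veps(\T)$. Hence $u\in\H^\infty_Q(\T^n)\setminus\H^\veps(\T^n)$, which is the assertion.

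There is essentially no genuine obstacle here; the only thing to watch is the bookkeeping just described — the identification $|\xi|=|\xi_1|$ on the line $\{\xi'=0\}$ and the resulting fact that the two relevant weights, $\log^{\rho s}(2+|\cdot|)$ and $(1+|\cdot|^2)^{\veps/2}$, are left unchanged by the reduction. One should resist building a radially symmetric counterexample on $\T^n$, since then the $|\xi|^{n-1}$ growth of the lattice sphere-counting function would have to be carried through all the $\ell^2$ estimates; concentrating the whole Fourier mass on a single axis avoids this completely.
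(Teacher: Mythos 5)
Your proof is correct. The paper states this corollary without an explicit argument, but your reduction is the natural (and presumably intended) one: take $u = u_1 \otimes 1$, i.e.\ concentrate the Fourier mass on the coordinate axis $\{\xi'=0\}$, so that $|\xi| = |\xi_1|$ on $\supp\widehat u$ and both the $\H^s_Q$-weight $\log^{\rho s}(2+|\xi|)$ and the Sobolev weight $(1+|\xi|^2)^{\veps/2}$ restrict verbatim to their one-dimensional versions, making every $\ell^2(\Z^n)$-sum collapse to the corresponding $\ell^2(\Z)$-sum from the Proposition. Your closing remark about avoiding a radially symmetric construction (and thereby the $|\xi|^{n-1}$ lattice-counting factor) correctly identifies the only pitfall one might stumble into.
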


The definition of global hypoellipticity used by J. Hounie says that the operator $\tilde{L}_\rho$, defined in \eqref{example-hounie-fail2}, is globally hypoelliptic on $\T\times\T$ if, given $u \in C^\infty(\T; \H^{-\infty}_Q(\T))$,
$$Lu \in   C^\infty(\T; \H^{\infty}_Q(\T)) \ \Rightarrow \ u \in C^\infty(\T; \H^{\infty}_Q(\T)).$$

It follows, from theorem 2.1 (\cite{HOU79} p.238), that the operator $\tilde{L}_\rho$ is globally hypoelliptic on $\T\times\T$, in the sense of the definition above, if, and only if, $b(t)$ does not change sign in $\T$, regardless of the value $\rho>0.$

Thus, in this case, J. Hounie does not say anything about the $C^\infty\!$--global hypoellipticity of this operator, while our theorem states that the $C^\infty\!$--global hypoellipticity of $\tilde{L}_\rho$ depends on $b$ and $\rho$.

\section{Reduction to normal form}

In this section we are going to show that, under suitable conditions, it is possible to replace the study of the global hypoelliptcity of the operator $L$ by an operator with constant coefficients. In particular, we will prove the following theorem:

\begin{theorem}\label{GH-conjugation-full}
Suppose that the condition \eqref{boundhyp} holds, that is,
\begin{equation}\label{boundhyp-2}
  \limsup_{j \to \infty} \dfrac{|\nu_{j}|}{\log (j)} = \kappa < + \infty.
\end{equation}
Then the following statements are equivalent:
\begin{quote}
\begin{enumerate}
  \item[{\it i.}] $L=D_t + a(t)p(x,D_x) + ib(t)q(x,D_x)$ is (GH);
  \item[{\it ii.}] $L_{a_0,b_0} = D_t + a_0p(x,D_x) + ib_0 q(x,D_x)$ is (GH).
\end{enumerate}
\end{quote}
\end{theorem}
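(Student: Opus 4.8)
The strategy is to conjugate $L$ to $L_{a_0,b_0}$ by a pseudo-differential operator of a very simple, "diagonal" type, exploiting the fact that after the reduction of Section 3.1 the whole problem decouples into the scalar equations \eqref{entradas-eq} with symbols $c_\ell^j(t) = a(t)\mu_\ell^j + ib(t)\nu_\ell^j$. Dropping the multiplicity index, one wants an operator $\Psi$, acting on the $j$-th Fourier mode by multiplication by a nonzero scalar $\omega_j(t)$, such that $\Psi^{-1} L \Psi = L_{a_0,b_0}$; equivalently, for each $j$ the function $\omega_j$ must solve $D_t \omega_j + (a(t)-a_0)\mu_j\,\omega_j + i(b(t)-b_0)\nu_j\,\omega_j = 0$. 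Since $a-a_0$ and $b-b_0$ have zero mean, this ODE on $\T$ has the periodic solution
\begin{equation*}
\omega_j(t) = \exp\!\Big( i\mu_j \!\int_0^t (a(\tau)-a_0)\,d\tau - \nu_j \!\int_0^t (b(\tau)-b_0)\,d\tau \Big),
\end{equation*}
which is smooth, periodic, and nowhere zero. The operator $\Psi$ defined by $(\Psi u)^j(t) = \omega_j(t)\,u^j(t)$ (and its inverse, with $\omega_j^{-1}$) then formally intertwines $L$ and $L_{a_0,b_0}$, and the claimed equivalence of (GH) follows once one checks that $\Psi$ and $\Psi^{-1}$ both preserve $C^\infty(\T\times M)$ and $\D(\T\times M)$: if that holds, then $Lu\in C^\infty$ with $u\in\D$ transfers to $L_{a_0,b_0}(\Psi^{-1}u) = \Psi^{-1}(Lu)\in C^\infty$ with $\Psi^{-1}u\in\D$, and conversely.

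The heart of the matter, therefore, is the mapping property of $\Psi$, and this is exactly where hypothesis \eqref{boundhyp-2} enters. By Proposition \ref{prop-smooth-2matr}, preservation of $C^\infty(\T\times M)$ amounts to showing that multiplying a rapidly decaying sequence $\{c^j_\ell(t)\}$ (rapidly decaying together with all $t$-derivatives, uniformly in $t$) by $\{\omega_j(t)\}$ keeps it rapidly decaying; the only danger is that $\partial_t^k\omega_j(t)$ might grow too fast in $j$ to be absorbed. Differentiating the exponential produces factors that are polynomials in $\mu_j$ and $\nu_j$ times $\omega_j$ itself, and $|\omega_j(t)| = \exp\big(-\nu_j\int_0^t(b-b_0)\big) \le \exp\big(|\nu_j|\cdot \|b-b_0\|_{L^1}\big)$. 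Here is the decisive point: the logarithmic bound \eqref{boundhyp-2} gives $|\nu_j| \le (\kappa+\varepsilon)\log j$ for large $j$, hence $|\omega_j(t)| \le C\, j^{(\kappa+\varepsilon)\|b-b_0\|_{L^1}}$, i.e. $\omega_j$ and all its $t$-derivatives grow at most polynomially in $j$. Polynomial factors are harmless against rapid decay, so $\Psi$ and $\Psi^{-1}$ both map $C^\infty(\T\times M)$ into itself; by the $\D$-part of Proposition \ref{prop-smooth-2matr} (only some $N$ is required) the same argument, even more easily, gives preservation of $\D(\T\times M)$. (One also needs $\mu_j$ under control; since $p(x,D_x)$ commutes with $E\in\Psi^1_{el}(M)$ and has order $1$, Weyl's formula, Lemma \ref{Weyl}, forces $|\mu_j| = O(\lambda_j) = O(j^{1/n})$, so $\mu_j$ contributes only polynomial factors as well.)

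I expect the main obstacle to be precisely the uniform-in-$t$ bookkeeping of the derivatives $\partial_t^k\omega_j$: one must verify by an induction on $k$ (or Faà di Bruno) that $\partial_t^k\omega_j(t) = \omega_j(t)\cdot P_{k,j}(t)$ with $P_{k,j}$ a polynomial in $\mu_j,\nu_j$ of degree $k$ whose coefficients are bounded in terms of $\|a\|_{C^{k}}, \|b\|_{C^{k}}$ only, and then combine this with the bound on $|\omega_j|$ to get $\sup_t |\partial_t^k\omega_j(t)| \le C_k\, j^{\beta k}$ for a fixed $\beta$ depending on $\kappa$ and $\|b-b_0\|_{L^1}$. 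Once this polynomial-growth estimate is in hand for both $\omega_j$ and $\omega_j^{-1}$ (the latter being the same computation with $\nu_j,\mu_j$ replaced by their negatives), the conjugation identity $\Psi^{-1} L\Psi = L_{a_0,b_0}$ is a direct computation mode by mode, and the equivalence of (GH) is immediate. A secondary point worth stating carefully is that $\Psi$ respects the decomposition into eigenspaces of $E$ and hence is well defined as an operator on $\D(\T\times M)$ in the first place; this is automatic from the fact that $\omega_j$ depends only on $j$ (equivalently on $\lambda_j$), so $\Psi$ is itself invariant with respect to $E$ in the sense of Delgado–Ruzhansky, even though, as noted in the introduction, it need not belong to any H\"ormander class when Diophantine phenomena are present.
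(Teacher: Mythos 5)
Your approach—conjugating $L$ mode by mode with the multiplier $\omega_j(t)$, then reducing preservation of $C^\infty$ and $\D$ to a polynomial-in-$j$ bound on $\sup_t|\partial_t^k\omega_j(t)|$ obtained from \eqref{boundhyp-2} for the $\nu_j$'s and from Weyl/continuity control (Proposition \ref{propseque}) for the $\mu_j$'s—is exactly the paper's, the only cosmetic difference being that the paper factors the conjugation as $\Psi_{a,b}=\Psi_a\circ\Psi_b$ (Theorem \ref{norm.form.theo}, Propositions \ref{norm.form.theo-real} and \ref{GH-conjugation}) instead of packaging both parts into one $\omega_j$. A harmless sign slip: solving $D_t\omega_j+(a-a_0)\mu_j\omega_j+i(b-b_0)\nu_j\omega_j=0$ gives $\omega_j(t)=\exp\bigl(-i\mu_j\int_0^t(a-a_0)\,d\tau+\nu_j\int_0^t(b-b_0)\,d\tau\bigr)$, so your displayed $\omega_j$ is actually $\omega_j^{-1}$; the modulus estimate $|\omega_j|\le e^{|\nu_j|\,\|b-b_0\|_{L^1}}$ and all subsequent growth bounds are unaffected.
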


The proof of this theorem consists in  constructing an automorphism $\Psi$  of $\D(\T\times M)$ 
such that
\begin{equation}\label{conj}
\Psi^{-1} \circ L \circ \Psi = L_{a_0,b_0}.
\end{equation}

First, we show how the condition \eqref{boundhyp-2} allows us to reduce the imaginary part of $L$ to the normal form in the diagonal case. Next, we use the classical reduction of the real part of $L$ to the normal form, to attain the full reduction, shown above, in the diagonal case. Finally, we show how to reduce to normal form in the case of multidimensional eigenspaces.

\subsection{Reduction to normal form in the diagonal form \label{reduct-section}}

Consider the map
\begin{equation}\label{psi-conjuga-b1}
u \in \D(\T \times M) \longmapsto \Psi_b u \doteq \sum_{j \in \N} e^{(B(t)-b_0t) \nu_j }u_j(t)\varphi_j(x),
\end{equation}
where $B(t) = \int_{0}^{t}b(s)ds$ and $b_0 = (2\pi)^{-1} \int_{0}^{2\pi}b(\tau) d\tau.$

If $\Psi_b$ is a linear operator on $\D(\T\times M)$, then the expression
\begin{equation*}
u \in \D(\T \times M) \longmapsto \Psi_b^{-1} u \doteq \sum_{j \in \N} e^{-(B(t)-b_0t) \nu_j }u_j(t) \varphi_j(x),
\end{equation*}
defines the inverse of $\Psi_b$, and thus $\Psi_b$ is an automorphism of $\D(\T\times M)$. Therefore, it is enough to prove that $\Psi_b u \in \D(\T\times M)$, for any
$u \in \D(\T\times M)$.

To prove this statement, consider the following sequence of smooth periodic functions
\begin{equation*}
  \psi_j(t) \doteq e^{(B(t)-b_0 t) \nu_j}u_j(t), \ t \in \T \mbox{ and } j \in \N.
\end{equation*}

We will show that $\{\psi_j(t)\}$ satisfies the condition \eqref{2-prop-smooth-2} of proposition \ref{1-prop-smooth-2} for some integer $N$,
i.e.,
\begin{equation*}
|\partial_t^k \psi_j(t)| \leqslant C\, j^{N}, \mbox{ as } j \to \infty,
\end{equation*}

Observe that the derivatives of $\psi_j$ depend on the powers of $\nu_j$ and we can control this growth with the assistance of the following result, which will be useful in other proofs that will appear in this work.

\begin{proposition}\label{propseque}
$|\mu_j| = \O(j^{1/n})$ \ and  \ $|\nu_j| = \O(j^{1/n})$, as  $j \to \infty.$
\end{proposition}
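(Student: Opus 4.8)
The plan is to deduce the bound $|\mu_j| = \O(j^{1/n})$ from the Weyl asymptotic formula (Lemma~\ref{Weyl}) together with the hypothesis that $p(x,D_x)\in\Psi^1(M)$ commutes with the normal elliptic operator $E(x,D_x)\in\Psi^1_{el}(M)$; the argument for $\nu_j$ is identical with $q$ in place of $p$, so I would only carry it out once. The key observation is that the commutation $[E,p]=0$ together with $\dim E_{\lambda_j}=1$ forces $\varphi_j$ to be a simultaneous eigenfunction, as already recorded in \eqref{seqmunu}: $p\varphi_j=\mu_j\varphi_j$ and $E\varphi_j=\lambda_j\varphi_j$. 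Hence on the $j$-th eigenspace the operator $p E^{-1}$ (well defined since $\lambda_j>0$) acts as multiplication by $\mu_j/\lambda_j$.

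\medskip

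First I would note that since $p\in\Psi^1(M)$ and $E^{-1}\in\Psi^{-1}(M)$ (the parametrix of the elliptic positive operator $E$, or simply the inverse, which exists and is pseudodifferential of order $-1$ because $0\notin\spec(E)$), the composition $R\doteq p\circ E^{-1}$ is a pseudodifferential operator of order $0$ on the closed manifold $M$. By the standard $L^2$-boundedness of order-zero pseudodifferential operators on a compact manifold, $R:L^2(M)\to L^2(M)$ is bounded, say with norm $\|R\|$. Testing $R$ against the unit vector $\varphi_j$ gives $\|R\varphi_j\|_{L^2(M)} = |\mu_j|/\lambda_j \le \|R\|$, so $|\mu_j|\le \|R\|\,\lambda_j$ for all $j$. (If one prefers to avoid invoking that $E^{-1}$ is pseudodifferential, one may instead use $\|p\varphi_j\|_{L^2}=|\mu_j|$ together with the fact that $p$ maps $\H^1(M)\to\H^0(M)=L^2(M)$ continuously and $\|\varphi_j\|_{\H^1(M)}\asymp \lambda_j^{1/m}=\lambda_j$ since $m=1$, which yields the same estimate $|\mu_j|=\O(\lambda_j)$.)

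\medskip

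Then I would invoke Weyl's Asymptotic Formula, Lemma~\ref{Weyl}, which for $E\in\Psi^1_{el}(M)$ (so $m=1$) on an $n$-dimensional manifold gives $\lambda_j \sim c\,j^{1/n}$ as $j\to\infty$. Combining with $|\mu_j|\le \|R\|\lambda_j$ yields $|\mu_j| = \O(j^{1/n})$, and the same argument with $q$ replacing $p$ gives $|\nu_j|=\O(j^{1/n})$, completing the proof. I do not anticipate a genuine obstacle here; the only point requiring a little care is the justification that $p\circ E^{-1}$ (or, in the alternative route, $p$ as a map between the relevant Sobolev spaces) is order zero and hence $L^2$-bounded — this is exactly where the hypotheses $p\in\Psi^1(M)$, $E\in\Psi^1_{el}(M)$ normal, and $0\notin\spec(E)$ are all used, and where one must be slightly attentive to stay within the pseudodifferential calculus rather than appealing to any spectral information about $p$ itself.
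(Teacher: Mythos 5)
Your proof is correct, and both of your suggested routes work; they are, however, somewhat different from what the paper actually does, so let me compare.

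The paper does \emph{not} compose $p$ with $E^{-1}$, nor does it apply an operator norm estimate directly to $\varphi_j$. Instead it argues via the Sobolev characterization \eqref{Sobolev1matr} together with an ad hoc sequence lemma (Lemma~\ref{lema-seq}): since $p:\H^1(M)\to L^2(M)$, any coefficient sequence $\{u_j\}$ with $\sum_j|u_j|^2 j^{2/n}<\infty$ must satisfy $\sum_j|u_j|^2\mu_j^2<\infty$, and then Lemma~\ref{lema-seq} (proved by constructing a counterexample sequence) forces $\{\mu_j j^{-1/n}\}$ to be bounded. In effect the paper rederives, by a hands-on contradiction argument, the quantitative consequence that an everywhere-defined diagonal map between these weighted $\ell^2$ spaces has bounded symbol. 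Your two alternatives short-circuit this: the parenthetical one applies the operator norm of $p:\H^1\to L^2$ directly to the unit eigenvector $\varphi_j$, using $\|\varphi_j\|_{\H^1}\asymp\lambda_j$ and Weyl's formula; the main one instead forms $R=p\circ E^{-1}\in\Psi^0(M)$ and uses $L^2$-boundedness of order-zero pseudodifferential operators, which pushes more of the work onto the calculus (invertibility and pseudodifferential nature of $E^{-1}$, composition). Your parenthetical route is closest in spirit to the paper's but is cleaner, since it simply invokes continuity rather than re-proving a boundedness statement; the $pE^{-1}$ route is genuinely different in mechanism but equally sound, with the small extra cost of justifying $E^{-1}\in\Psi^{-1}(M)$. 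All three proofs ultimately rest on the same two pillars the paper isolates — Weyl asymptotics $\lambda_j\sim cj^{1/n}$ (for $m=1$) and $\H^1\to L^2$ continuity of the order-one operator $p$ — so there is no gap.
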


\begin{proof}
Since $p(x,D_x)$ and $q(x,D_x)$ are continuous linear operators from the space $\H^1(M)$ to $\H^0(M)=L^2(M)$ then, by \eqref{Sobolev1matr}, we have
\begin{equation*}
\sum_{j\in \N} |u_j|^2 j^{2/n} <+\infty \Leftrightarrow u \in H^1(M) \Rightarrow p(x,D_x)u, q(x,D_x)u \in L^2(M).
\end{equation*}

Now, from \eqref{2-prop-smooth-2} and \eqref{Sobolev1matr}, we obtain
\begin{align*}
  \Big\|p(x,D_x)u\Big\|_{L^2(M)}^2 & = \Big\|\sum_{j\in\N} u_j \mu_j\varphi_j(x) \Big\|_{L^2(M)}^2  \\[2mm]
                                   & = \ \sum_{j\in\N} \Big(|u_j|^2 \mu_j^2 \Big)\\[2mm]
                                   & =  \sum_{j\in\N} \Big( \dfrac{\mu_j^2} {j^{{2}/{n}}} ( |u_j|^2 j^{{2}/{n}}) \Big).
\end{align*}

It follows from lemma \ref{lema-seq}, that the sequence $\big\{|\mu_j|j^{-1/n}\big\}$ is bounded, and therefore
\begin{equation*}
|\mu_j| = \O(j^{1/n}) \ \mbox{ as } \ j \to \infty.
\end{equation*}

Analogously, $|\nu_j| = \O(j^{1/n})$ as $j \to \infty$.

\end{proof}

\begin{lemma}\label{lema-seq}
Let $\{\omega_j\}_{j \in \N} $ be a sequence of complex numbers with the following property: for all sequence of complex numbers $\{u_j\}$,
\begin{equation*}
  \sum_{j \in \N} |u_j|^2 j^{2/n}< \infty \ \Longrightarrow \ \sum_{j \in \N} |\omega_j|^2|u_j|^2 j^{2/n}< \infty.
\end{equation*}
Then $\{\omega_j\}$ is bounded.
\end{lemma}

\begin{proof}
If $\{\omega_j\}$ was unbounded, we could construct a subsequence $\{\omega_{j_k}\}_k$ such that
$$|\omega_{j_k}| > 2^{k/2}, \ \ k \in \N.$$

Setting
\begin{equation*}
u_j = \left\{
  \begin{array}{ll}
    2^{-k/2}j_k^{-1/n}, & \hbox{if $j=j_k$, for some $k \in \N$;} \\[2mm]
    0, & \hbox{otherwise.}
  \end{array}
\right.
\end{equation*}

We would have $\sum_{j} |u_j|^2 j^{2/n}< \infty$ and $\sum_{j} |\omega_j|^2|u_j|^2 j^{2/n} = \infty$.

\end{proof}

\begin{theorem}\label{norm.form.theo}
If \eqref{boundhyp-2} holds, then $\Psi_b$ defined in \eqref{psi-conjuga-b1} is an automorphism of $\D(\T\times M)$.
\end{theorem}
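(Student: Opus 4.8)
The plan is to reduce the claim to the scalar estimate for the periodic functions $\psi_j(t) = e^{(B(t)-b_0t)\nu_j}u_j(t)$ via Proposition \ref{prop-smooth-2matr}: since $\Psi_b$ is already seen to be a formal bijection on $\D(\T\times M)$ with inverse given by replacing $\nu_j$ by $-\nu_j$, it suffices to show that if $\{u_j(t)\}$ satisfies the distributional bound \eqref{2-prop-smooth-2} for some integer $N$ — i.e. $\max_{t\in\T}|\partial_t^k u_j(t)| = \O(j^N)$ for every $k$ — then $\{\psi_j(t)\}$ satisfies the same type of bound, possibly with a larger exponent. By symmetry (swap $\nu_j \leftrightarrow -\nu_j$) the same argument applied to $\Psi_b^{-1}$ shows it too maps $\D(\T\times M)$ into itself, so both are continuous and $\Psi_b$ is an automorphism.

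The heart of the matter is controlling $\partial_t^k\big(e^{(B(t)-b_0t)\nu_j}\big)$. Write $\phi(t) = B(t)-b_0t$, which is smooth, $2\pi$-periodic, and has mean zero, hence is bounded on $\T$ independently of $j$; all its derivatives are likewise bounded uniformly in $j$. By Fa\`a di Bruno, $\partial_t^k\big(e^{\phi(t)\nu_j}\big) = e^{\phi(t)\nu_j}\cdot P_k(t,\nu_j)$, where $P_k$ is a polynomial in $\nu_j$ of degree $k$ whose coefficients are universal polynomials in $\phi',\dots,\phi^{(k)}$, hence bounded on $\T$ uniformly in $j$. Therefore $|\partial_t^k(e^{\phi(t)\nu_j})| \leqslant C_k\, e^{\|\phi\|_\infty |\nu_j|}\,(1+|\nu_j|)^k$. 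Now invoke the hypothesis \eqref{boundhyp-2}: $\limsup_j |\nu_j|/\log j = \kappa < \infty$ means that for every $\veps>0$ there is $j_0$ with $|\nu_j| \leqslant (\kappa+\veps)\log j$ for all $j\geqslant j_0$, so $e^{\|\phi\|_\infty|\nu_j|} \leqslant j^{\|\phi\|_\infty(\kappa+\veps)}$ for large $j$, and $(1+|\nu_j|)^k = \O((\log j)^k) = \O(j^\veps)$. Combining, $|\partial_t^k(e^{\phi(t)\nu_j})| = \O(j^{M_k})$ for $M_k = \|\phi\|_\infty(\kappa+\veps) + \veps$, a bound polynomial in $j$ uniformly in $t$, for each fixed $k$. (Here Proposition \ref{propseque} is not even needed for this step, only the explicit log bound; but it underlies the whole framework.)

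Finally I combine the two bounds by the Leibniz rule: $\partial_t^k\psi_j(t) = \sum_{i=0}^k \binom{k}{i}\partial_t^i\big(e^{\phi(t)\nu_j}\big)\,\partial_t^{k-i}u_j(t)$, and each summand is $\O(j^{M_i})\cdot\O(j^N) = \O(j^{M_i+N})$ uniformly in $t\in\T$. Taking $N' = N + \max_{0\leqslant i\leqslant k}M_i$ — which only grows linearly in nothing problematic since we fix $k$ — gives $\max_{t\in\T}|\partial_t^k\psi_j(t)| = \O(j^{N'})$. Thus, by the equivalence (iv)$\Leftrightarrow$(vi) of Proposition \ref{prop-smooth-2matr}, $\Psi_b u \in \D(\T\times M)$. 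Running the identical argument with $-\phi$ (equivalently $-\nu_j$), which satisfies exactly the same bounds, shows $\Psi_b^{-1}$ maps $\D(\T\times M)$ into itself. Hence $\Psi_b$ is a bijection of $\D(\T\times M)$ with sequentially continuous inverse, i.e. an automorphism.

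The only genuinely delicate point is making sure the exponential factor $e^{\|\phi\|_\infty|\nu_j|}$ is at most polynomial in $j$: this is exactly where the logarithmic growth hypothesis \eqref{boundhyp-2} is indispensable, and it is the reason the theorem fails under the super-logarithmic alternative \eqref{onboundhyp} of Theorem \ref{main-thm-dim-1}(iii). Everything else — Fa\`a di Bruno, uniform bounds on $\phi$ and its derivatives, the Leibniz rule — is routine bookkeeping on the exponent of $j$.
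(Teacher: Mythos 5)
Your proof is correct and structurally the same as the paper's: reduce to the scalar Fourier-coefficient estimate via Proposition \ref{prop-smooth-2matr}, control the $t$-derivatives of the exponential factor by Leibniz/Fa\`a di Bruno, and feed in the logarithmic bound \eqref{boundhyp-2}. The differences are in bookkeeping. The paper bounds the polynomial-in-$\nu_j$ prefactor using Proposition \ref{propseque} ($|\nu_j| = \O(j^{1/n})$), whereas you bound it directly from the same log hypothesis, giving $(1+|\nu_j|)^k = \O((\log j)^k) = \O(j^\veps)$; this makes Proposition \ref{propseque} unnecessary here and keeps the final exponent essentially independent of $k$. The paper also sharpens the exponential bound with a case analysis on $\rho = \max(B-b_0 t)$, $\delta = \min(B-b_0 t)$, and the sign of $\nu_j$; you use the cruder uniform bound $e^{\|\phi\|_\infty |\nu_j|} \leqslant j^{\|\phi\|_\infty(\kappa+\veps)}$, which is less tight but entirely sufficient for polynomial growth and for the $\D$ characterization. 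You also make explicit the symmetry argument (swap $\nu_j \leftrightarrow -\nu_j$) showing $\Psi_b^{-1}$ is equally well defined, a point the paper leaves implicit. One small inaccuracy worth fixing: $\phi(t) = B(t) - b_0 t$ is indeed smooth and $2\pi$-periodic (since $\phi'(t) = b(t) - b_0$ has mean zero), but $\phi$ itself need not have mean zero; what you actually need and use is only that $\phi$ and all its derivatives are bounded on $\T$ uniformly in $j$, which follows from periodicity and smoothness alone.
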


\begin{proof}
Following the ideas in the beginning of this subsection, to prove this result it suffices to show that the sequence of functions
\begin{equation*}
  \psi_j(t) \doteq e^{(B(t)-b_0 t) \nu_j}u_j(t), \ t \in \T,
\end{equation*}
satisfy the condition \eqref{2-prop-smooth-2} of proposition \ref{1-prop-smooth-2} for some integer $N$.

Since $b$ is periodic and smooth, and $u \in \D(\T\times M)$, this same proposition \ref{1-prop-smooth-2} guarantees the existence of an integer $N_0$ and of a constant $C>0$ such that
\begin{equation}\label{ineq-psi-1}
|\partial_t^k \psi_j(t)| \leqslant C j^{N_1} e^{(B(t)-b_0t) \nu_j}, \mbox{ as } j \to \infty,
\end{equation}
where $N_1 = N_0 + k/n$.

Now, we observe that, the hypothesis
$$
   \limsup_{j \to \infty} \dfrac{|\nu_{j}|}{\log (j)} = \kappa < + \infty,
$$
is equivalent to the following statement: for all $\varepsilon >0$, there is $j_0 \in \N$, such that
\begin{equation}\label{log-hyp-norm}
|\nu_j| \leqslant \log( j^{\kappa + \varepsilon}), \ \forall j \geqslant j_0.
\end{equation}

If we set
$$\rho = \max_{t \in [0,2\pi]}\big(B(t)-b_0t\big) \mbox{ \ and \ } \delta = \min_{t \in [0,2\pi]}\big(B(t)-b_0t\big),$$
then only one of the following three possibilities occur:
\begin{equation*}\label{cases}
\rho \leqslant \delta \leqslant 0, \ \ \  0 \leqslant \rho \leqslant \delta \ \ \mbox{ or } \ \ \rho \leqslant 0 \leqslant  \delta.
\end{equation*}

Moreover, as $\nu_j \rightarrow +\infty$ or $\nu_j \rightarrow -\infty$, we will have only one of the following inequalities, respectively:
\begin{align}
&\rho \nu_j \leqslant (B(t)-b_0t) \nu_j \leqslant \delta \nu_j, \ \textrm{ or} \label{B-b0t-infty-1} \\[2mm]
&\delta \nu_j \leqslant (B(t)-b_0t) \nu_j \leqslant \rho \nu_j. \label{B-b0t-infty-2}
\end{align}

First, let us analyze the case $\nu_j \rightarrow +\infty$. If $\rho \leqslant  \delta \leqslant 0$, estimate \eqref{B-b0t-infty-1} implies
$e^{\delta \nu_j} \leqslant 1$ for $j$ large enough; thus by \eqref{log-hyp-norm}  the estimate \eqref{ineq-psi-1} becomes
\begin{equation}\label{ineq-psi-2}
|\partial_t^k \psi_j(t)| \leqslant  C \ j^{N_1}, \ j \to \infty.
\end{equation}

Now, if $0 \leqslant \rho \leqslant \delta$, or $\rho \leqslant 0  \leqslant \delta$, it follows that
\begin{align}\label{ineq-psi-3}
|\partial_t^k \psi_j(t)| \leqslant & C \ j^{N_1} e^{\delta \nu_j}  \\[2mm]
                         \leqslant & C \ j^{N_1} e^{\delta \log(j^{\kappa + \varepsilon})}\nonumber \\[2mm]
                         \leqslant & C j^{N_1 + \delta ( \kappa + \varepsilon)}, \ j \to \infty.\nonumber
\end{align}

On one  hand, when $\nu_j \rightarrow -\infty$, if  $0 \leqslant \rho \leqslant \delta$ the inequality \eqref{B-b0t-infty-2} implies $e^{\rho \nu_j} \leqslant 1$, for $j$ large enough, in a way that we recapture \eqref{ineq-psi-2}. But, if $\rho \leqslant \delta \leqslant 0$, or $\rho \leqslant 0 \leqslant\delta$, we obtain
\begin{align}\label{ineq-psi-4}
|\partial_t^k \psi_j(t)| \leqslant & C \ j^{N_1} e^{\rho \nu_j}\\[2mm]
                         \leqslant & C \ j^{N_1} e^{-\rho\log(j^{\kappa + \varepsilon})}\nonumber \\[2mm]
                         \leqslant & C j^{N_1 - \rho ( \kappa + \varepsilon)}, \ j \to \infty.\nonumber
\end{align}

Thus, fixed $\varepsilon>0$ and setting
\begin{equation*}
N = \max \{N_1, \  N_1 - \rho ( \kappa + \varepsilon), \  N_1 + \delta ( \kappa + \varepsilon)\},
\end{equation*}
it follows that
\begin{equation*}
|\partial_t^k \psi_j(t)| \leqslant C j^{N}, \mbox{ as } \ j \to \infty,
\end{equation*}
and then $\Psi_b u \in \D(\T \times M)$.

\end{proof}

\begin{corollary}\label{corollary-normform}
If \eqref{boundhyp-2} holds, $\Psi_b$ is an automorphism of\, $C^{\infty}(\T\times M)$.
\end{corollary}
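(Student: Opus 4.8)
The plan is to re-run the estimate from the proof of Theorem~\ref{norm.form.theo}, but feeding in the \emph{smooth} decay condition of Proposition~\ref{prop-smooth-2matr} — condition \eqref{2-prop-smooth-2} for \emph{every} integer $N$ — in place of the distributional one, which only asks it to hold for \emph{some} $N$. By Theorem~\ref{norm.form.theo} the map $\Psi_b$ is already a bijection of $\D(\T\times M)$ with explicit inverse, obtained by replacing $\nu_j$ by $-\nu_j$ in the exponent; so it suffices to show that $\Psi_b$ and $\Psi_b^{-1}$ each map $C^{\infty}(\T\times M)$ into itself and are continuous for its Fr\'echet topology. Since the hypothesis \eqref{boundhyp-2} is insensitive to the sign of $\nu_j$, the argument for $\Psi_b^{-1}$ is verbatim the one for $\Psi_b$, so I only treat $\Psi_b$.

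First I would fix $u\in C^{\infty}(\T\times M)$, set $\psi_j(t)=e^{(B(t)-b_0t)\nu_j}u_j(t)$, fix $k\in\N$ and an integer $N$, and expand $\partial_t^k\psi_j$ by the Leibniz rule. Each summand is $\bigl(\partial_t^{k-m}e^{(B(t)-b_0t)\nu_j}\bigr)\,\partial_t^{m}u_j(t)$ with $0\leqslant m\leqslant k$, and $\partial_t^{k-m}e^{(B(t)-b_0t)\nu_j}=e^{(B(t)-b_0t)\nu_j}\,\Pi_{k-m,j}(t)$, where $\Pi_{k-m,j}$ is a polynomial of degree $\leqslant k$ in $\nu_j$ whose coefficients are products of derivatives of $b$, hence bounded on $\T$ uniformly in $j$; by Proposition~\ref{propseque} this factor is $\O(j^{k/n})$ uniformly in $t$. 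For the exponential itself, putting $\Lambda=\max_{[0,2\pi]}|B(t)-b_0t|$ and using the reformulation \eqref{log-hyp-norm} of \eqref{boundhyp-2}, one gets, once $\varepsilon>0$ is fixed, $e^{(B(t)-b_0t)\nu_j}\leqslant e^{\Lambda|\nu_j|}\leqslant j^{\Lambda(\kappa+\varepsilon)}$ for all $t\in\T$ and all large $j$. Hence every summand is $\leqslant C\,j^{\,k/n+C_0}\,\max_{t}|\partial_t^{m}u_j(t)|$, where $C_0\doteq\Lambda(\kappa+\varepsilon)$.

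The point to nail down — and really the only one — is that the polynomial loss $j^{\,k/n+C_0}$ is \emph{uniform in $N$}: indeed $C_0$ is assembled only from $\kappa$, the fixed $\varepsilon$, and the extrema of $B(t)-b_0t$, none of which involves $N$. Consequently, since $u\in C^{\infty}(\T\times M)$ gives $\max_t|\partial_t^{m}u_j(t)|=\O(j^{-(N+k/n+C_0)})$ for every $m\leqslant k$, I would conclude $\max_t|\partial_t^k\psi_j(t)|=\O(j^{-N})$; as $k$ and $N$ were arbitrary, Proposition~\ref{prop-smooth-2matr} yields $\Psi_b u\in C^{\infty}(\T\times M)$, while the bound $\max_t|\partial_t^k\psi_j|\leqslant C\,j^{\,k/n+C_0}\sum_{m\leqslant k}\max_t|\partial_t^{m}u_j|$ simultaneously exhibits $\Psi_b$ (and, symmetrically, $\Psi_b^{-1}$) as continuous on $C^\infty(\T\times M)$. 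Invoking Theorem~\ref{norm.form.theo} to identify $\Psi_b^{-1}$ then finishes the proof. I do not expect a genuine obstacle: this is the computation of Theorem~\ref{norm.form.theo} with ``for all $N$'' in place of ``for some $N$'', the substantive work having already been done in Proposition~\ref{propseque} and Theorem~\ref{norm.form.theo}.
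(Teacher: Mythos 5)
Your proposal is correct and follows essentially the same route as the paper: re-run the Leibniz estimate from Theorem~\ref{norm.form.theo} with the rapid decay of $\partial_t^m u_j$ for all orders $N$, and observe that the polynomial loss coming from $\nu_j=\O(j^{1/n})$ and the logarithmic bound $|\nu_j|\leqslant\log(j^{\kappa+\varepsilon})$ is independent of $N$. Your only deviation is cosmetic — you bound the exponential uniformly by $j^{\Lambda(\kappa+\varepsilon)}$ with $\Lambda=\max|B(t)-b_0t|$ instead of repeating the paper's sign-dependent case analysis with $\rho$ and $\delta$, and you add the (routine) continuity remark — neither of which changes the substance.
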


\begin{proof}
If $u \in C^{\infty}(\T\times M)$, then the expression  (\ref{ineq-psi-1}) becomes
\begin{equation*}
|\partial_t^k \psi_j(t)| \leqslant C \ j^{-\eta + k/n} \ e^{(B(t)-b_0t) \nu_j}, \mbox{ as } \ j \to \infty,
\end{equation*}
for any $\eta>0$.  Thus, setting
\begin{equation*}
N_3 = \max \{k/n, \  k/n - \rho ( \kappa + \varepsilon), \  k/n + \delta ( \kappa + \varepsilon)\},
\end{equation*}
we obtain
\begin{equation*}
|\partial_t^k \psi_j(t)| \leqslant C j^{-\eta  + N_3}, \ j \to \infty,
\end{equation*}
that implies $\Psi_b u \in C^{\infty}(\T \times M)$.

\end{proof}

\begin{proposition}\label{GH-conjugation}
Suppose that the condition \eqref{boundhyp-2} holds and consider the following  operator
\begin{equation*}
L_{b_0} \doteq  D_t + a(t)p(x,D_x) + ib_0 q(x,D_x).
\end{equation*}
Then we have
\begin{enumerate}
        \item[i.] $Lu=f \Leftrightarrow L_{b_0}v=g$, where $v = \Psi_b^{-1} u$ and $g = \Psi_b^{-1} f$;

        \item[ii.] $\Psi_b^{-1} \circ L \circ \Psi_b = L_{b_0}$;

  \item[iii.] $L$ is (GH) if, and only if, $L_{b_0}$ is (GH).
\end{enumerate}
\end{proposition}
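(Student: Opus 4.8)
The plan is to exploit the conjugation identity that the preceding Theorem \ref{norm.form.theo} and Corollary \ref{corollary-normform} make available: $\Psi_b$ is an automorphism of both $\D(\T\times M)$ and $C^\infty(\T\times M)$ under hypothesis \eqref{boundhyp-2}. So the entire proposition reduces to checking the algebraic intertwining relation on the level of $x$-Fourier series, and then reading off the three claims as formal consequences.

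First I would verify part (ii), the identity $\Psi_b^{-1}\circ L\circ\Psi_b = L_{b_0}$. Working componentwise on the $j$-th Fourier mode, $\Psi_b$ multiplies $u_j(t)$ by $e^{(B(t)-b_0t)\nu_j}$, and $L$ acts on the $j$-th mode as $D_t + a(t)\mu_j + ib(t)\nu_j$ (since $p\varphi_j=\mu_j\varphi_j$, $q\varphi_j=\nu_j\varphi_j$ by \eqref{seqmunu}). The only non-trivial point is how $D_t$ interacts with the exponential factor: $D_t\bigl(e^{(B(t)-b_0t)\nu_j}w(t)\bigr) = e^{(B(t)-b_0t)\nu_j}\bigl(D_t w(t) - i(b(t)-b_0)\nu_j w(t)\bigr)$, using $D_t = -i\partial_t$ and $B'(t)=b(t)$. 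The term $-i\cdot i(b(t)-b_0)\nu_j = (b(t)-b_0)\nu_j$ exactly cancels the $ib(t)\nu_j$ contribution down to $ib_0\nu_j$, while the $a(t)\mu_j$ term is untouched because $\Psi_b$ does not touch the $p$-part. Hence conjugating $L$ by $\Psi_b$ replaces $ib(t)q$ by $ib_0 q$, which is precisely $L_{b_0}$. This is a one-line computation per mode and assembles to the operator identity on $\D(\T\times M)$.

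Part (i) is then immediate: if $Lu=f$, apply $\Psi_b^{-1}$ to both sides and use (ii) to get $L_{b_0}(\Psi_b^{-1}u) = \Psi_b^{-1}f$, i.e. $L_{b_0}v = g$; the converse applies $\Psi_b$ and uses that $\Psi_b$ is a bijection. For part (iii), suppose $L$ is (GH) and let $v\in\D(\T\times M)$ with $L_{b_0}v\in C^\infty(\T\times M)$. Set $u=\Psi_b v\in\D(\T\times M)$ (Theorem \ref{norm.form.theo}); by (ii), $Lu = \Psi_b L_{b_0}v$, which lies in $C^\infty(\T\times M)$ by Corollary \ref{corollary-normform}. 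Global hypoellipticity of $L$ forces $u\in C^\infty(\T\times M)$, and then $v = \Psi_b^{-1}u\in C^\infty(\T\times M)$ again by Corollary \ref{corollary-normform}. The reverse implication is symmetric, using $\Psi_b^{-1}$ in place of $\Psi_b$.

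The only real obstacle is bookkeeping: one must make sure $\Psi_b$ genuinely maps $C^\infty$ to $C^\infty$ and $\D$ to $\D$ — but this is exactly what Theorem \ref{norm.form.theo} and Corollary \ref{corollary-normform} supply under \eqref{boundhyp-2}, so there is nothing further to prove there. A minor subtlety worth a sentence is that $B(t)-b_0t$ is $2\pi$-periodic (because $\int_0^{2\pi}b = 2\pi b_0$), which is what makes $\Psi_b$ well-defined on the torus and makes the conjugated coefficient $ib_0$ genuinely constant; I would note this explicitly before doing the mode computation. Everything else is formal, so the proof will be short.
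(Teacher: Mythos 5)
Your proposal is correct and follows essentially the same path as the paper: a mode-wise intertwining computation establishing the conjugation identity, followed by the purely formal deduction of the equivalence of global hypoellipticity via the automorphism properties of $\Psi_b$ from Theorem \ref{norm.form.theo} and Corollary \ref{corollary-normform}. (The paper proves (i) first and reads off (ii), you do the reverse, but the underlying calculation is identical.) One small remark: the phrase ``$-i\cdot i(b(t)-b_0)\nu_j = (b(t)-b_0)\nu_j$'' is garbled — the term coming out of the $D_t$ computation is $-i(b(t)-b_0)\nu_j$, and the correct cancellation is $-i(b(t)-b_0)\nu_j + ib(t)\nu_j = ib_0\nu_j$, exactly as in the displayed line you wrote just before; the conclusion is right, but that sentence as written does not describe any step of the calculation.
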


\begin{proof} To prove $i.$, let $u \in \D(\T \times M)$ and $f \doteq Lu$, setting $v = \Psi_b^{-1} u$ and $g = \Psi_b^{-1} f$  we have
\begin{align*}
L_{b_0}v = & \sum_{j \in \N} \Big\{ \big[D_t (e^{-(B(t)-b_0t) \nu_j} u_j(t)) \\[2mm]
           & \hspace{9mm} + a(t)\mu_je^{-(B(t)-b_0t) \nu_j} u_j(t) + i b_0 \nu_j e^{-(B(t)-b_0t) \nu_j} u_j(t) \big] \varphi_j(x) \Big\} \\[2mm]
     = & \sum_{j \in \N} \left\{ [D_t u_j(t) + a(t)\mu_j u_j(t) + i b(t) \nu_j  u_j(t) ]  e^{-(B(t)-b_0t) \nu_j} \varphi_j(x) \right \} \\[2mm]
                 = & \sum_{j \in \N} f_j(t)  e^{- (B(t)-b_0t)\nu_j} \varphi_j(x) \ = \ \Psi_b^{-1} f \ = \ g.
\end{align*}

The proof of the other direction is analogous.

To prove $ii.$, using the same notation above, we have
$$
\Psi_b^{-1} \circ L \circ \Psi_b( v ) = \Psi_b^{-1}L(u) =  \Psi_b^{-1} f = g = L_{b_0} v.
$$

Finally, given $v \in \D(\T \times M)$ such that $g = L_{b_0} v \in
C^{\infty}(\T \times M)$, since $\Psi_b$ is an automorphism of
$C^{\infty}(\T \times M)$, we have that $f = \Psi_b g$ is a smooth function on $\T \times M$. From $ii.$ we have $L u = f$, where
$v = \Psi_b^{-1} u$. Supposing that $L$ is (GH), we have that $u$ is smooth, hence $v$ is smooth and $L_{b_0}$ is (GH). The converse assertion is proved in the same way.

\end{proof}

\subsection{Reduction of the real part}

The idea here is essentially the same one that we used for the imaginary part. Indeed, it is somewhat simpler because it does not require any additional hypothesis about the growth of the sequence $\{\mu_j\}$. Furthermore, this type of reduction was widely used by several authors, for example: A. P. Bergamasco \cite{BERG99}, A. P Bergamasco \emph{et al.} \cite{BKNZ15} and  W. Chen and M.Y. Chi \cite{CC00}; for this reason, and because the statements and proofs are very similar to the case already proved, we are just going to state the following result without proof.

\begin{proposition}\label{norm.form.theo-real}
Define on $\D(\T\times M)$ the following map
\begin{equation*}
u \mapsto \Psi_a u \doteq \sum_{j \in \N} e^{-i (A(t)-a_0t) \mu_j }u_j(t)\varphi_j(x).
\end{equation*}
Then
\begin{enumerate}
  \item[i.]  $\Psi_a$ is an automorphism of $\D(\T\times M);$
  \item[ii.] $\Psi_a$ is an automorphism of $C^{\infty}(\T\times M);$
  \item[iii.] $L$ is (GH) if, and only if, $L_{a_0}=D_t + a_0p(x,D_x) + ib(t) q(x,D_x)$ is (GH).
\end{enumerate}
\end{proposition}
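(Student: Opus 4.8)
The plan is to mirror exactly the argument carried out for the imaginary part in Theorem \ref{norm.form.theo}, Corollary \ref{corollary-normform}, and Proposition \ref{GH-conjugation}, with the single but decisive simplification that the conjugating exponent is now purely imaginary. Concretely, set $A(t) = \int_0^t a(s)\,ds$, so that $A(t)-a_0 t$ is a smooth $2\pi$-periodic function (since $A(2\pi)-2\pi a_0 = 0$), and define $\Psi_a u = \sum_{j} e^{-i(A(t)-a_0 t)\mu_j} u_j(t)\varphi_j(x)$ together with the formal inverse $\Psi_a^{-1} u = \sum_j e^{i(A(t)-a_0 t)\mu_j} u_j(t)\varphi_j(x)$. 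To establish part $i.$, it suffices (as in the opening of Subsection \ref{reduct-section}) to show that $\Psi_a$ maps $\D(\T\times M)$ into itself, which reduces to checking that the functions $\phi_j(t) \doteq e^{-i(A(t)-a_0 t)\mu_j} u_j(t)$ satisfy the growth condition \eqref{2-prop-smooth-2} of Proposition \ref{prop-smooth-2matr} for some integer $N$. Differentiating $k$ times via the Leibniz rule produces a sum of terms each of which carries a factor $|\mu_j|^\ell$ with $\ell \leqslant k$, multiplied by bounded derivatives of $A(t)-a_0 t$ and by $e^{-i(A(t)-a_0t)\mu_j}$; the crucial point is that $|e^{-i(A(t)-a_0t)\mu_j}| = 1$ identically, so no exponential growth or decay appears and no control on $\{\mu_j\}$ beyond Proposition \ref{propseque} is needed. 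Since $|\mu_j| = \O(j^{1/n})$ and $u_j$ satisfies \eqref{2-prop-smooth-2}, we get $|\partial_t^k \phi_j(t)| \leqslant C j^{N_0 + k/n}$, which gives \eqref{2-prop-smooth-2} with $N = N_0 + k/n$ and hence $\Psi_a u \in \D(\T\times M)$.

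Part $ii.$ follows by the same computation applied to $u \in C^\infty(\T\times M)$: then \eqref{2-prop-smooth-2} holds for every $N$, and since the unimodular factor contributes nothing, $\Psi_a u$ inherits rapid decay of its Fourier coefficients, so $\Psi_a u \in C^\infty(\T\times M)$; the same applies to $\Psi_a^{-1}$. For part $iii.$, the conjugation identity is a direct computation: applying $D_t = -i\partial_t$ to $e^{-i(A(t)-a_0 t)\mu_j} u_j(t)$ produces the term $-i(-i)(a(t)-a_0)\mu_j \cdot e^{-i(A(t)-a_0 t)\mu_j} u_j(t) = -(a(t)-a_0)\mu_j(\cdots)$, which combines with $a_0 \mu_j(\cdots)$ coming from the term $a_0 p(x,D_x)$ of $L_{a_0}$ to reconstruct exactly $a(t)\mu_j(\cdots)$; the $ib(t)q$ term and the $D_t u_j$ term pass through untouched. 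Thus $\Psi_a^{-1}\circ L \circ \Psi_a = L_{a_0}$ on $\D(\T\times M)$, and then the equivalence of (GH) follows verbatim from the argument in Proposition \ref{GH-conjugation}: if $v\in\D$ with $L_{a_0}v \in C^\infty$, then $u = \Psi_a v \in \D$ satisfies $Lu = \Psi_a(L_{a_0}v) \in C^\infty$ by part $ii.$, so (GH) of $L$ forces $u \in C^\infty$, hence $v = \Psi_a^{-1}u \in C^\infty$; the converse is identical.

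I do not expect a genuine obstacle here — this is why the authors state it without proof. The only point requiring the slightest care is the verification that $A(t)-a_0 t$ is periodic (so that $\Psi_a$ really acts on functions on $\T\times M$ rather than on a cover), and the bookkeeping of the Leibniz expansion to confirm that the worst power of $|\mu_j|$ appearing is $j^{k/n}$, which is harmless. Everything else is a transcription of the imaginary-part argument with the exponential estimates \eqref{B-b0t-infty-1}--\eqref{ineq-psi-4} replaced by the trivial bound $|e^{-i(A(t)-a_0t)\mu_j}| = 1$; in particular the case analysis on the sign of $B(t)-b_0 t$ and the hypothesis \eqref{boundhyp-2} are not needed at all, which is the content of the remark opening this subsection.
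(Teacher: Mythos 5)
Your proposal is correct and is exactly what the authors have in mind when they omit the proof: it is the purely-imaginary-exponent version of Theorem \ref{norm.form.theo}, Corollary \ref{corollary-normform} and Proposition \ref{GH-conjugation}, and you rightly observe that the unimodularity of $e^{-i(A(t)-a_0 t)\mu_j}$ removes the need for any growth hypothesis on the sequence $\{\mu_j\}$ beyond the polynomial bound of Proposition \ref{propseque}, which is precisely why condition \eqref{boundhyp-2} plays no role here. There is one sign slip in your verbal description of the conjugation: the term $-(a(t)-a_0)\mu_j$ produced by applying $D_t$ to $e^{-i(A(t)-a_0 t)\mu_j}$ should combine with $a(t)\mu_j$ coming from the $a(t)\,p(x,D_x)$ term of $L$ (not with $a_0\mu_j$ from $L_{a_0}$) to give $a_0\mu_j$, which verifies $L\circ\Psi_a=\Psi_a\circ L_{a_0}$; the sum you wrote, $-(a(t)-a_0)\mu_j+a_0\mu_j$, equals $(2a_0-a(t))\mu_j$ rather than $a(t)\mu_j$. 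The conclusion and the argument are nonetheless the intended ones.
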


\subsection{Normal form on multidimesional eigenspaces}

In this subsection we show how to recapture the reduction to normal form in the case of multidimensional eigenspaces. Just as we did before, the idea here is to obtain an automorphism $\Psi_{a,b}$ of the space $C^{\infty}(\T\times M)$, such that
\begin{equation}\label{conj1}
\Psi_{a,b}^{-1} \circ L \circ \Psi_{a,b} = L_{a_0,b_0}.
\end{equation}

Using the same notation as in section \ref{reduct-diag}, let $\{e^{k}_j(x)\}_{k = 1}^{d_j}$  be a basis of the space $E_{\sigma_j}$, and for each $u \in \D(\T\times M)$ write
\begin{equation*}
u = \sum_{j\in \N} \left\langle u^j (t) , e^j (x) \right\rangle_{\C^{d_j}}.
\end{equation*}

Let $P_j,Q_j \in \C^{d_j \times d_j}$ be the matrices of $p(x,D_x)$ and $q(x,D_x)$ on the space $E_{\sigma_j}$, with respect to that basis, and define the real sequences
\begin{align*}
\{\mu_j\} \doteq & \ \{ \mu_1^1, \ldots, \mu_1^{d_1}, \mu_2^{1}, \ldots, \mu_2^{d_2}, \ldots, \mu_j^{1}, \ldots, \mu_j^{d_j}, \ldots \}, \\[2mm]
\{\nu_j\} \doteq & \ \{ \nu_1^1, \ldots, \nu_1^{d_1}, \nu_2^{1}, \ldots, \nu_2^{d_2}, \ldots, \nu_j^{1}, \ldots, \nu_j^{d_j}, \ldots \},
\end{align*}
where $\{\mu_j^l\}$ and  $\{\nu_j^l\}$ are the eigenvalues of $P_j$ and $Q_j$, respectively.

With these notations, for each $u \in \D(\T\times M)$ set
\begin{equation}\label{psi-conjuga-b}
\Psi_{b} u \doteq \sum_{j\in \N} \left\langle   e^{(B(t)-b_0 t) \, Q_j} u^j (t) , e^j (x) \right\rangle_{\C^{d_j}}.
\end{equation}

\begin{proposition}
If $\{\nu_j\}$ satisfies \eqref{boundhyp-2}, then $\Psi_b$ is an automorphism of the spaces $\D(\T\times M)$ and $C^{\infty}(\T\times M)$.
\end{proposition}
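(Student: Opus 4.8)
The plan is to mimic, essentially line by line, the arguments of Theorem~\ref{norm.form.theo} and Corollary~\ref{corollary-normform}, replacing the scalar factor $e^{(B(t)-b_0t)\nu_j}$ by the matrix exponential $e^{(B(t)-b_0t)Q_j}$ and exploiting the self-adjointness of $Q_j$ to reduce every estimate to its one-dimensional counterpart. As in the diagonal case, invertibility of $\Psi_b$ is purely formal: the prospective inverse is $\Psi_b^{-1}u=\sum_{j\in\N}\langle e^{-(B(t)-b_0t)Q_j}u^j(t),e^j(x)\rangle_{\C^{d_j}}$, and $\Psi_b\circ\Psi_b^{-1}=\Psi_b^{-1}\circ\Psi_b=\mathrm{id}$ because $e^{(B(t)-b_0t)Q_j}$ and $e^{-(B(t)-b_0t)Q_j}$ are exponentials of scalar multiples of one fixed matrix, hence commute and multiply to $I_{d_j}$. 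Since the estimates below are insensitive to the sign of $Q_j$, it therefore suffices to show that $\Psi_b$ maps $\D(\T\times M)$ into itself and $C^\infty(\T\times M)$ into itself.

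So I would fix $u\in\D(\T\times M)$, write $g(t)=B(t)-b_0t$ (a smooth periodic function, since $g(2\pi)=g(0)=0$), and study the $j$-th vector coefficient $w^j(t)=e^{g(t)Q_j}u^j(t)$, the goal being $\|\partial_t^k w^j(t)\|_{\C^{d_j}}=\O(j^{N})$ for some $N$ (and $\O(j^{-\eta})$ for every $\eta>0$ in the smooth case), after which Proposition~\ref{prop-smooth-2matr} closes the argument. Since $Q_j=Q_j^*$, the spectral theorem (Lemma~\ref{simult-diag.}) gives a unitary $S_j$ with $S_j^*Q_jS_j=\diag(\nu_j^1,\dots,\nu_j^{d_j})$; because $Q_j$ does not depend on $t$ one gets $e^{g(t)Q_j}=S_j\,\diag(e^{g(t)\nu_j^l})_l\,S_j^*$, so by unitarity $\|e^{g(t)Q_j}\|_{\mathrm{op}}=\max_l e^{g(t)\nu_j^l}$, while Leibniz's rule gives $\partial_t^a e^{g(t)Q_j}=R_a(t,Q_j)\,e^{g(t)Q_j}$ with $R_a(t,\cdot)$ a polynomial of degree $\le a$ whose coefficients are universal polynomials in $g'(t),\dots,g^{(a)}(t)$ (hence bounded on $\T$), whence $\|R_a(t,Q_j)\|_{\mathrm{op}}\le C_a(1+\max_l|\nu_j^l|)^a$. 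Expanding $\partial_t^k w^j=\sum_{a}\binom{k}{a}(\partial_t^a e^{g(t)Q_j})(\partial_t^{k-a}u^j)$ and taking operator norms reduces everything to bounding the two scalar quantities $\max_l|\nu_j^l|$ and $\max_l e^{g(t)\nu_j^l}$ exactly as in the proof of Theorem~\ref{norm.form.theo}: Proposition~\ref{propseque} (whose proof uses only that $q(x,D_x)$ is bounded $\H^1(M)\to L^2(M)$, hence applies verbatim to the concatenated sequence $\{\nu_j\}$), together with Weyl's formula (Lemma~\ref{Weyl}), gives $\max_l|\nu_j^l|=\O(j^{C_0})$ for some $C_0$, so the $R_a$-factor grows polynomially in $j$; and the reformulation \eqref{log-hyp-norm} of the hypothesis \eqref{boundhyp-2} gives $\max_l|\nu_j^l|\le C\log j$, so that, splitting into the three cases $\rho\le\delta\le 0$, $0\le\rho\le\delta$, $\rho\le 0\le\delta$ for $\rho=\max_{[0,2\pi]}g$, $\delta=\min_{[0,2\pi]}g$, one finds $\|e^{g(t)Q_j}\|_{\mathrm{op}}\le C\,j^{L}$ for some $L$ independent of $j$ and $t$. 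Since $u\in\D(\T\times M)$ gives $\|\partial_t^{k-a}u^j(t)\|_{\C^{d_j}}=\O(j^{N_0})$ for some real $N_0$ (absorbing the polynomially bounded multiplicities $d_j$), the displayed sum is $\O(j^{N})$; for $u\in C^\infty(\T\times M)$ the same factors turn $\O(j^{-\eta})$, for every $\eta$, into $\O(j^{-\eta'})$, for every $\eta'$. Invoking Proposition~\ref{prop-smooth-2matr} twice then yields that $\Psi_b$ preserves $\D(\T\times M)$ and $C^\infty(\T\times M)$.

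I expect the only genuinely new point — and the one deserving care — to be the passage through the matrix exponential and the attendant non-commutativity. It is harmless precisely because $Q_j$ is self-adjoint and, crucially, independent of $t$: the diagonalizing unitaries $S_j$ commute with $\partial_t$ and drop out of every norm estimate, so that the multidimensional case costs nothing beyond bookkeeping on top of Theorem~\ref{norm.form.theo} and Corollary~\ref{corollary-normform}. This is exactly the configuration that would break down if the diagonalizing unitary genuinely depended on $t$, which is why a hypothesis of simultaneous diagonalizability by a smooth family $S_t$ has to be imposed in the general, non-separated setting.
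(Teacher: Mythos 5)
Your proof is correct and takes essentially the same route as the paper's. Both exploit the self-adjointness of the $t$-independent $Q_j$ to reduce the matrix exponential to scalar exponentials---the paper by passing explicitly to the diagonalizing orthonormal basis of $E_{\sigma_j}$ and then invoking the one-dimensional case component by component, you by tracking operator norms through the spectral theorem---and then apply the scalar estimates of Theorem~\ref{norm.form.theo} and Corollary~\ref{corollary-normform}.
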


\begin{proof}
Note that $Q_j \sim D_{Q_j} = \left ( \nu_{j}^1,   \ldots,  \nu_{j}^{d_j} \right)$, thus using the same notation $\{e^j_k (x)\}$ for the basis where $Q_j$ is diagonal, we obtain
\begin{align*}
\Psi_{b} u = & \sum_{j\in \N} \left\langle   e^{(B(t)-b_0 t) \, D_{Q_j} } u^j (t) , e^j (x) \right\rangle_{\C^{d_j}} \\[2mm]
                 = & \sum_{j\in \N}\sum_{k=1}^{d_j} e^{\nu_j^k (B(t)-b_0 t) }u_j^k(t) e^j_k (x)
                                                                 =  \sum_{j\in \N}\sum_{k=1}^{d_j} \psi_j^k(t) e^j_k (x).
\end{align*}

Thus, from the one-dimensional case, each $\psi_j^{\ell}(t)$ satisfies the conditions which guarantee that $\Psi_{b}$ is well defined. Moreover,
$\Psi_{b} u \in C^{\infty}(\T\times M)$, if $u \in C^{\infty}(\T\times M)$.

\end{proof}

\begin{corollary}
The map
\begin{equation}\label{psi-conjuga-a}
 \D(\T \times M ) \ni u \longmapsto \Psi_{a} u \doteq \sum_{j\in \N} \left\langle
                                                      e^{- i (A(t)-a_0 t) \, P_j} u^j (t) , e^j (x) \right\rangle_{\C^{d_j}}
\end{equation}
defines an automorphism of $\D(\T\times M)$  and $C^{\infty}(\T\times M)$.
\end{corollary}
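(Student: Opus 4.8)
The plan is to reduce the statement to the one–dimensional reduction of the real part, Proposition~\ref{norm.form.theo-real}, and to the preceding multidimensional proposition for $\Psi_b$, by diagonalizing $P_j$ on each eigenspace. Since $p(x,D_x)$ is self-adjoint, each matrix $P_j$ is Hermitian, so there is a unitary $S_j$ with $P_j = S_j D_{P_j} S_j^{*}$, where $D_{P_j}=\diag(\mu_j^1,\ldots,\mu_j^{d_j})$; because the relevant $\C^{d_j}$-norms and all $t$-derivatives are invariant under this unitary change of basis (the computation in Section~\ref{reduct-diag}), we may work in the basis in which $P_j$ is diagonal. In that basis
\[
\Psi_a u = \sum_{j\in\N}\sum_{k=1}^{d_j} e^{-i(A(t)-a_0 t)\mu_j^k}\,u_j^k(t)\,e_k^j(x) = \sum_{j\in\N}\sum_{k=1}^{d_j}\psi_j^k(t)\,e_k^j(x),
\]
and one first notes that $A(t)-a_0 t$ is a smooth $2\pi$-periodic function (since $\int_0^{2\pi}(a(\tau)-a_0)\,d\tau=0$), so each $\psi_j^k$ is indeed smooth and periodic.

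The decisive simplification, compared with the $\Psi_b$ case, is that the exponent $-i(A(t)-a_0 t)\mu_j^k$ is purely imaginary, so that $|e^{-i(A(t)-a_0 t)\mu_j^k}|=1$ for all $t$, $j$, $k$; this is exactly why no hypothesis on the growth of $\{\mu_j\}$ is needed. Applying Leibniz's rule, $\partial_t^k\psi_j^\ell$ is a finite sum of terms, each a bounded periodic function times a power $(\mu_j^\ell)^m$ with $m\le k$ times a derivative $\partial_t^r u_j^\ell(t)$ with $r\le k$. By Proposition~\ref{propseque} — whose proof, via the continuity $p(x,D_x)\colon\H^1(M)\to L^2(M)$ and Lemma~\ref{lema-seq}, applies verbatim to the concatenated sequence $\{\mu_j\}$ counted with multiplicities — we have $|\mu_j^\ell|=\O(j^{1/n})$, and if $u\in\D(\T\times M)$ then $|\partial_t^r u_j^\ell(t)|=\O(j^{N_0})$ for some integer $N_0$ by the characterization \eqref{2-prop-smooth-2}. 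Hence $|\partial_t^k\psi_j^\ell(t)|=\O(j^{N_0+k/n})$ uniformly in $t$, so $\{\psi_j^\ell\}$ satisfies \eqref{2-prop-smooth-2} and $\Psi_a u\in\D(\T\times M)$. If instead $u\in C^\infty(\T\times M)$, the same bound holds with $N_0$ replaced by $-\eta$ for every $\eta>0$, giving $\Psi_a u\in C^\infty(\T\times M)$.

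Finally, the formal inverse is $\Psi_a^{-1}u=\sum_{j\in\N}\langle e^{i(A(t)-a_0 t)P_j}u^j(t),e^j(x)\rangle_{\C^{d_j}}$; the estimates above apply to it word for word, and $\Psi_a\circ\Psi_a^{-1}$ and $\Psi_a^{-1}\circ\Psi_a$ equal the identity map by the functional calculus for the Hermitian matrices $P_j$. Therefore $\Psi_a$ is an automorphism of both $\D(\T\times M)$ and $C^\infty(\T\times M)$. I do not expect a genuine obstacle here; the only point requiring a word of care is that Proposition~\ref{propseque} was stated in the simple-eigenvalue setting, so one should remark that the identical argument bounds the eigenvalues $\mu_j^\ell$ of the blocks $P_j$ once they are arranged into a single sequence indexed consistently with Weyl's formula.
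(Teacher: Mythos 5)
Your proof is correct and takes essentially the approach the paper intends: diagonalize each Hermitian block $P_j$ by a unitary $S_j$, observe that the exponent $-i(A(t)-a_0t)\mu_j^\ell$ is purely imaginary (so the exponential has modulus one, which is exactly why no growth hypothesis on $\{\mu_j\}$ is needed here, in contrast to $\Psi_b$), and then apply Leibniz's rule together with the polynomial bound $|\mu_j^\ell|=\mathcal O(j^{1/n})$ from Weyl asymptotics and the Seeley-type characterization \eqref{2-prop-smooth-2} of $\D(\T\times M)$ and $C^\infty(\T\times M)$. The paper omits this proof, deferring to the analogous arguments for $\Psi_b$ (multidimensional) and for $\Psi_a$ in the one-dimensional reduction (Proposition~\ref{norm.form.theo-real}), and you have filled in precisely those details, including the correct formal inverse and the verification that it composes to the identity.
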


\begin{proposition}\label{GH-conjugation-mult}
Let $L_{b_0}$ be the operator
\begin{equation*}
L_{b_0} =  D_t + a(t)p(x,D) + ib_0 q(x,D).
\end{equation*}

If conditions \eqref{boundhyp-2} hold and  $[p(x,D_x), q(x,D_x)] = 0$, then
\begin{enumerate}
        \item $Lu=f$ if, and only if, $L_{b_0}v=g$, where $v = \Psi_b^{-1} u$ and $g = \Psi_b^{-1} f$;

        \item $\Psi_b^{-1} \circ L \circ \Psi_b = L_{b_0}$;

  \item $L$ is (GH) if, and only if, $L_{b_0}$ é (GH).
\end{enumerate}
\end{proposition}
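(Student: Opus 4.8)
The plan is to imitate the argument of Proposition \ref{GH-conjugation} step by step, replacing scalars by the commuting matrices $P_j$ and $Q_j$. The key structural fact we will use is the hypothesis $[p(x,D_x),q(x,D_x)]=0$, which, as recalled in Section \ref{reduct-diag}, is equivalent to $[P_j,Q_j]=0$ for every $j\in\N$; together with self-adjointness this gives, via Lemma \ref{simult-diag.}, a single unitary $S_j$ that simultaneously diagonalizes $P_j$ and $Q_j$. So first I would fix, for each $j$, the orthonormal basis of $E_{\sigma_j}$ in which both $P_j$ and $Q_j$ are diagonal; in this basis the matrix exponential $e^{(B(t)-b_0t)Q_j}$ is simply the diagonal matrix with entries $e^{(B(t)-b_0t)\nu_j^k}$, and each scalar component decouples exactly as in \eqref{entradas-eq}. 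The previous proposition already establishes that $\Psi_b$ is an automorphism of $\D(\T\times M)$ and of $C^\infty(\T\times M)$ under \eqref{boundhyp-2}, so that part is in hand.

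For item (1), I would compute $L_{b_0}(\Psi_b^{-1}u)$ directly on each eigenspace. Writing $V_j(t)=e^{-(B(t)-b_0t)Q_j}U_j(t)$ and using $D_t e^{-(B(t)-b_0t)Q_j} = -\tfrac1i\,(b(t)-b_0)Q_j\,e^{-(B(t)-b_0t)Q_j}$, together with the crucial commutation $[P_j,Q_j]=0$ which lets $e^{-(B(t)-b_0t)Q_j}$ pass through $a(t)P_j$, one finds
\begin{align*}
\big(D_t + a(t)P_j + ib_0Q_j\big)V_j(t)
  &= e^{-(B(t)-b_0t)Q_j}\big(D_tU_j(t) + a(t)P_jU_j(t) + ib(t)Q_jU_j(t)\big)\\
  &= e^{-(B(t)-b_0t)Q_j}F_j(t),
\end{align*}
which is exactly the $j$-th component of $\Psi_b^{-1}f$. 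This is the only place where commutativity is genuinely needed, and it is the analogue of Remark \ref{obs-comuta}. Item (2) is then the operator-level restatement of (1): $\Psi_b^{-1}\circ L\circ\Psi_b(v)=\Psi_b^{-1}L(u)=\Psi_b^{-1}f=g=L_{b_0}v$ for $v=\Psi_b^{-1}u$.

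For item (3), the argument is verbatim the one in Proposition \ref{GH-conjugation}(iii): if $v\in\D(\T\times M)$ and $g=L_{b_0}v\in C^\infty(\T\times M)$, then $f=\Psi_b g\in C^\infty(\T\times M)$ because $\Psi_b$ is an automorphism of $C^\infty(\T\times M)$; by (2), $Lu=f$ with $u=\Psi_b v\in\D(\T\times M)$; if $L$ is (GH) then $u\in C^\infty$, hence $v=\Psi_b^{-1}u\in C^\infty$, so $L_{b_0}$ is (GH), and the converse is symmetric. The only real obstacle is a bookkeeping one — making sure the matrix exponentials and the operator $D_t$ interact correctly and that $[P_j,Q_j]=0$ is invoked at precisely the right spot — and this is handled cleanly by diagonalizing $P_j$ and $Q_j$ simultaneously, after which everything reduces to the scalar computation already carried out.
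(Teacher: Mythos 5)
Your proposal is correct and follows essentially the same route as the paper: define $\mathcal{M}_j(t)=e^{-(B(t)-b_0t)Q_j}$, expand $L_{b_0}(\Psi_b^{-1}u)$ on each eigenspace, pull $\mathcal{M}_j(t)$ out to the left using the fact that $Q_j$ always commutes with its exponential and that $[P_j,Q_j]=0$ (forced by $[p,q]=0$) lets $P_j$ pass through $\mathcal{M}_j(t)$, and then conclude items (2) and (3) exactly as in the one-dimensional Proposition \ref{GH-conjugation}. The preliminary remark about simultaneously diagonalizing $P_j$ and $Q_j$ is a harmless alternative framing, but your actual computation does not need it and coincides with the paper's direct matrix argument.
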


\begin{proof}
Let  $u, f \in \D(\T \times M)$, such that $v = \Psi_b^{-1} u$ and $g = \Psi_b^{-1} f$. To simplify, set
\begin{equation*}
{\cal{M}}_j (t) = e^{- (B(t)-b_0 t) \, Q_j }, \ j \in \N.
\end{equation*}

Then, we have
\begin{align}\label{equiv-1}
L_{b_0} v = & \ D_t v + a(t) p (x,D_x) v + ib_0 q(x,D_x) v \nonumber\\[3mm]
          = & \sum_{j\in \N} \left\langle D_t v^j (t) + a(t) P_j v^j (t) + ib_0 Q_j v^j (t), \ e^j (x) \right\rangle_{\C^{d_j}} \nonumber\\[2mm]
                                        = & \sum_{j\in \N} \left\langle D_t \left( {\cal{M}}_j (t) u^j (t) \right) + a(t)P_j{\cal{M}}_j (t) u^j (t)+ ib_0 Q_j{\cal{M}}_j(t)u^j(t),e^j(x) \right\rangle  \nonumber \\[2mm]
                                        = & \sum_{j\in \N} \left\langle {\cal{M}}_j (t) \left( D_t u^j (t) + a(t) P_j u^j (t) +  ib(t) Q_j u^j (t) \right),
                                             e^j (x) \right\rangle.
\end{align}

Since any matrix commutes with its exponential, we obtain
\begin{equation*}
Q_j  {\cal{M}}_j (t) = Q_j e^{- (B(t)-b_0 t) \, Q_j } = e^{- (B(t)-b_0 t) \, Q_j }  Q_j =  {\cal{M}}_j (t) Q_j.
\end{equation*}

On the other hand, from $[p(x,D_x),q(x,D_x)]=0$, we have $P_j Q_j = Q_j P_j$, and then
\begin{equation}\label{P-comuta-M}
P_j  {\cal{M}}_j (t) = P_j e^{- (B(t)-b_0 t) \, Q_j } = e^{- (B(t)-b_0 t) \, Q_j }  P_j =  {\cal{M}}_j (t) P_j,
\end{equation}
thus, from \eqref{equiv-1},
\begin{align}\label{equiv-2}
L_{b_0} v = &   \sum_{j\in \N} \left\langle{\cal{M}}_j (t) \left (  D_t v^j (t) + a(t) P_j v^j (t) + ib_0 Q_j v^j (t) \right ) \ , \ e^j (x) \right\rangle_{\C^{d_j}} \nonumber \\[1mm]
          = &   \sum_{j\in \N} \left\langle  e^{- (B(t)-b_0 t) \, Q_j }    f^j (t) \ , \ e^j (x) \right\rangle_{\C^{d_j}} \nonumber \\[2mm]
                                        = & \Psi^{-1}_b f = g,
\end{align}
which  implies $L_{b_0} v =g$. The other equivalence is identical, thus (1) is done. The statements  (2) and (3) are identical to the one-dimensional case.

\end{proof}

\begin{corollary}
If $\{\nu_j\}$ satisfies \eqref{boundhyp-2}  and $[p(x,D_x),q(x,D_x)] = 0$, then $\Psi_{a,b} = \Psi_{a} \circ \Psi_{b}$ defines an automorphism of $\D(\T\times M)$  and $C^{\infty}(\T\times M)$. Moreover, $\Psi_{a,b}^{-1} \circ L \circ \Psi_{a,b} = L_{a_0,b_0}$.
\end{corollary}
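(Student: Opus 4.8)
The plan is to deduce the corollary by composing the two reduction maps $\Psi_a$ and $\Psi_b$ that were already constructed, and then checking that the composition still works and produces the right conjugation. Concretely, from the preceding propositions I already know that $\Psi_b$ is an automorphism of both $\D(\T\times M)$ and $C^\infty(\T\times M)$ (when $\{\nu_j\}$ satisfies \eqref{boundhyp-2}), that $\Psi_a$ (defined in \eqref{psi-conjuga-a}) is an automorphism of the same spaces with no growth hypothesis needed, and that $\Psi_b^{-1}\circ L\circ\Psi_b = L_{b_0}$ by Proposition \ref{GH-conjugation-mult}. The analogous statement for the real part, $\Psi_a^{-1}\circ L_{b_0}\circ\Psi_a = L_{a_0,b_0}$, is the multidimensional version of Proposition \ref{norm.form.theo-real}; I would either cite it as the already-stated real-part reduction applied to $L_{b_0}$ in place of $L$ (note $L_{b_0}$ has the same structure, with $b_0$ a constant, so the real-part argument applies verbatim), or reprove it in one line exactly as in the proof of Proposition \ref{GH-conjugation-mult}, using that any matrix commutes with its own exponential and that $[P_j,Q_j]=0$ so $Q_j$ commutes with ${\cal N}_j(t) = e^{-i(A(t)-a_0t)P_j}$.

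First I would record that $\Psi_{a,b} \doteq \Psi_a\circ\Psi_b$ is an automorphism of $\D(\T\times M)$ and of $C^\infty(\T\times M)$: this is immediate since a composition of automorphisms is an automorphism, and both factors have already been shown to be automorphisms of each space under the stated hypotheses. Next I would compute the conjugation:
\begin{equation*}
\Psi_{a,b}^{-1}\circ L\circ\Psi_{a,b} = \Psi_b^{-1}\circ\Psi_a^{-1}\circ L\circ\Psi_a\circ\Psi_b = \Psi_b^{-1}\circ(\Psi_a^{-1}\circ L\circ\Psi_a)\circ\Psi_b.
\end{equation*}
By the real-part reduction (multidimensional analogue of Proposition \ref{norm.form.theo-real}, part ii, which needs $[p,q]=0$ to push $P_j$ through ${\cal N}_j(t)$), we have $\Psi_a^{-1}\circ L\circ\Psi_a = D_t + a_0 p(x,D_x) + ib(t)q(x,D_x) =: L_{a_0}$, and then $\Psi_b^{-1}\circ L_{a_0}\circ\Psi_b = D_t + a_0 p(x,D_x) + ib_0 q(x,D_x) = L_{a_0,b_0}$ by Proposition \ref{GH-conjugation-mult} applied with $a(t)$ replaced by the constant $a_0$. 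This chains to give \eqref{conj1}.

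The only genuine point requiring care — and what I expect to be the main (minor) obstacle — is the order of composition and the commutation bookkeeping: one must apply $\Psi_a^{-1}$ first and $\Psi_b^{-1}$ second (or equivalently conjugate the real part inside the $\Psi_b$-conjugation), and one must check that the two reductions do not interfere. They do not, because in the multidimensional setting $[P_j,Q_j]=0$ (equivalent to $[p(x,D_x),q(x,D_x)]=0$, the standing hypothesis) means $e^{-i(A(t)-a_0t)P_j}$ and $e^{-(B(t)-b_0t)Q_j}$ commute, so $\Psi_a$ and $\Psi_b$ commute as operators and each pushes through the other's conjugation cleanly; this is exactly the computation \eqref{P-comuta-M} in the proof of Proposition \ref{GH-conjugation-mult}. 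Once this is observed, the corollary follows. Finally, the equivalence of (GH) for $L$ and $L_{a_0,b_0}$ — i.e. Theorem \ref{GH-conjugation-full} in the multidimensional case — is then immediate from the conjugation together with the fact that $\Psi_{a,b}$ preserves both $\D(\T\times M)$ and $C^\infty(\T\times M)$, by the same argument as in part iii of Proposition \ref{GH-conjugation-mult}.
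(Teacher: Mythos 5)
Your proposal is correct and follows essentially the route the paper intends (the corollary is stated without proof precisely because it reduces to composing the two already-established reductions under the shared hypothesis $[p,q]=0$). One small slip in wording: in the parenthetical remark ``(multidimensional analogue of Proposition \ref{norm.form.theo-real}, part ii, which needs $[p,q]=0$ to push $P_j$ through ${\cal N}_j(t)$)'' the matrix that must be pushed through ${\cal N}_j(t)=e^{-i(A(t)-a_0t)P_j}$ is $Q_j$, not $P_j$ ($P_j$ commutes with ${\cal N}_j(t)$ for free, being an exponential of $P_j$); you state this correctly earlier in the paragraph, so it is just a typo, not a gap.
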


\begin{remark}
A crucial point in the last proof is to obtain \eqref{equiv-2} from \eqref{equiv-1}, which is possible only because the hypothesis $[p(x,D_x),q(x,D_x)] =0$ implies that \eqref{P-comuta-M} holds. Therefore is not possible to conjugate $L$ and $L_{b}$, as in the one-dimensional case, without the commutation hypothesis.

For the same reason, the reduction to normal form can not work for the real part. Indeed, let
\begin{equation*}
L_{a_0} =  D_t + a_0p(x,D) + ib(t) q(x,D)
\end{equation*}
and define
\begin{equation*}
{\cal{N}}_j (t) = e^{ i (A(t)-a_0 t) \, P_j }, \ j \in \N.
\end{equation*}

Thus, following the same calculations above, we need to obtain
\begin{equation*}
Q_j  {\cal{N}}_j (t) = Q_j e^{i (A(t)-a_0 t) \, P_j } = e^{i (A(t)-a_0 t) \, P_j }  Q_j =  {\cal{N}}_j (t) P_j.
\end{equation*}
\end{remark}

\section{Proof of theorem \ref{main-thm-dim-1}}

In this section we are going to state and prove three theorems (\ref{prop-1}, \ref{prop-2} and \ref{prop-3}) that, together with theorem \ref{GH-conjugation-full}, are equivalent to theorem \ref{main-thm-dim-1} about the (GH) of the diagonal case with the separation of variables.

We start by recalling that
$$
  L = D_t + a(t)p(x,D_x) + ib(t)q(x,D_x),
$$
where $a,b \in C^\infty(\mathbb{T}), \ p, q \in \Psi^1(M)$ are self-adjoint and commute with the normal elliptic operator $E$.

The set $\{\varphi_j\}$ is an orthonormal basis for $L^2(M)$, formed by eigenfunctions of $E$, and we are supposing that the corresponding eigenspaces $E_{\lambda_j}$ have dimension one.

\smallskip
The sequences of real numbers $\{\mu_j\}$ and $\{\nu_j\}$ satisfy
\begin{equation*}
p(x,D_x)\varphi_j = \mu_j \varphi_j \ \textrm{ and } \ q(x,D_x)\varphi_j = \nu_j \varphi_j, \ \ j \in \N.
\end{equation*}
and
$$
  \lim_{j \to \infty} |\nu_j| = \infty. \vspace{3mm}
$$

\begin{remark}\label{L-GH-iff-iL-GH}
First, note that the study of the (GH) of $L$ is equivalent to the study of the (GH) of the operator
$$
iL = \partial_t + ia(t)p(x,D_x) - b(t)q(x,D_x).
$$
In propositions \ref{prop-1} and \ref{prop-2}, we study the (GH) of $iL$. The reason for this choice is that the terms of the Fourier coefficients, with respect to $x$, are somewhat simpler, and the notation is closer to that used in the differential case (present in most studies published).
\end{remark}

Let $u \in \D(\T \times M)$ be a distribution such that $f \doteq iLu \in C^{\infty}(\T \times M)$. Taking the $x$-Fourier
\eqref{Fourier2matr}, we obtain the following sequence of ordinary differential equations:
\begin{equation}\label{Eq1-propA}
\partial_t u_j(t) + c_j(t) u_j(t) = f_j(t), \quad t \in \T, \ j \in \N,
\end{equation}
where $c_j(t) = -\nu_j b(t) +i\mu_j a(t)$. We denote $c_j^0=-\nu_j b_0+ia_0\mu_j$,  $\forall j \in \N$.

For each $j \in \N$, such that $c^0_j \notin i \Z$, the equation (\ref{Eq1-propA}) has a unique solution that can be written as
\begin{equation}\label{sol1-Eq2-propA}
u_j(t)=(1 - e^{-2\pi c^0_j} )^{-1} \int_{0}^{2\pi}{e^{\int_{t}^{t-s} c_j(\tau)d\tau}f_j(t-s)ds},
\end{equation}
or equivalently as,
\begin{equation}\label{sol1-Eq3-propA}
u_j(t) =  (e^{2\pi c^0_j}  - 1 )^{-1} \int_{0}^{2\pi}{e^{\int_{t}^{t+s} c_j(\tau)d\tau}f_j(t + s)ds},
\end{equation}

Note that we need to study de behavior of all derivatives of solutions \eqref{sol1-Eq2-propA}, \eqref{sol1-Eq3-propA} and, specially,
the derivatives of the exponential terms.

\begin{proposition}\label{prop-exp}
Consider the primitive $C_j(t) \doteq -\nu_j B(t) +i\mu_j A(t)$, where
\begin{equation*}
A(t) \doteq \int_{0}^{t}a(s)ds, \textrm{ \ and \ \ } B(t) = \int_{0}^{t}b(s)ds.
\end{equation*}

For any $k\in \N_0$, there is a constant $C = C(k,a,b)>0$, such that
\begin{equation}\label{prop 3-0}
\left | \partial_t^k e^{C_j(t)} \right| \leqslant C j^{k/n}  e^{-\nu_j B(t)}, \textrm{ as } j \to \infty.
\end{equation}
\end{proposition}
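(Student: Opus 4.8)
The plan is to differentiate $e^{C_j(t)}$ directly using the Faà di Bruno formula and then estimate the resulting terms, exploiting that $C_j'(t) = -\nu_j b(t) + i\mu_j a(t)$ together with the polynomial bounds on $|\mu_j|$ and $|\nu_j|$ from Proposition \ref{propseque}. Concretely, for any $k \geqslant 1$ we can write
\begin{equation*}
\partial_t^k e^{C_j(t)} = e^{C_j(t)} \sum_{\pi} c_\pi \prod_{\ell} \big(C_j^{(\ell)}(t)\big)^{m_\ell},
\end{equation*}
where the sum runs over partitions of $k$ into parts of size $\ell$ with multiplicity $m_\ell$, so $\sum_\ell \ell\, m_\ell = k$, and $C_j^{(\ell)}(t) = -\nu_j b^{(\ell-1)}(t) + i\mu_j a^{(\ell-1)}(t)$ for $\ell \geqslant 1$. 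The scalar combinatorial coefficients $c_\pi$ depend only on $k$, and the derivatives $a^{(\ell-1)}, b^{(\ell-1)}$ are bounded on $\T$ by a constant depending only on $k, a, b$; hence each factor $|C_j^{(\ell)}(t)|$ is bounded by a constant times $\max\{|\mu_j|, |\nu_j|\}$.

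Next I would take absolute values. The key observation is that $|e^{C_j(t)}| = e^{\mathrm{Re}\, C_j(t)} = e^{-\nu_j B(t)}$, since $A(t)$ and $B(t)$ are real-valued and $\mu_j, \nu_j$ are real; this produces exactly the exponential factor $e^{-\nu_j B(t)}$ in the claimed bound. For the product term, each partition contributes a factor bounded by $(C\max\{|\mu_j|,|\nu_j|\})^{\sum_\ell m_\ell} \leqslant (C\max\{|\mu_j|,|\nu_j|\})^{k}$, using $\sum_\ell m_\ell \leqslant \sum_\ell \ell\, m_\ell = k$. Then Proposition \ref{propseque} gives $\max\{|\mu_j|,|\nu_j|\} = \O(j^{1/n})$, so the product term is $\O(j^{k/n})$. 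Since there are only finitely many partitions of $k$, summing over them keeps the bound $\O(j^{k/n})$, and we obtain $|\partial_t^k e^{C_j(t)}| \leqslant C j^{k/n} e^{-\nu_j B(t)}$ as $j \to \infty$, with $C$ depending only on $k, a, b$. The case $k = 0$ is immediate from $|e^{C_j(t)}| = e^{-\nu_j B(t)}$.

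The only mildly delicate point — hardly an obstacle — is bookkeeping: making sure that the power of $\max\{|\mu_j|,|\nu_j|\}$ appearing is at most $k$ and not something like $k$ times the number of partitions, and that all constants are uniform in $t \in \T$ (which they are, since $a, b$ and all their derivatives are continuous on the compact set $\T$). One could alternatively prove \eqref{prop 3-0} by induction on $k$: writing $\partial_t^{k+1} e^{C_j(t)} = \partial_t\big(C_j'(t) e^{C_j(t)}\big) = C_j''(t) e^{C_j(t)} + C_j'(t)\,\partial_t^k\!\big(\ldots\big)$ and applying the Leibniz rule together with the inductive hypothesis and the bounds $|C_j^{(\ell)}(t)| \leqslant C j^{1/n}$; each differentiation costs at most one extra factor of $\O(j^{1/n})$, which accounts precisely for the increment from $j^{k/n}$ to $j^{(k+1)/n}$. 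Either route is routine; I would present the Faà di Bruno version as it makes the structure of the bound transparent in one shot.
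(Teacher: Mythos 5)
Your proposal is correct, and your primary route (Fa\`a di Bruno) differs from the paper's. The paper proves the estimate by induction on $k$: it writes $\partial_t^{k+1} e^{C_j(t)} = \partial_t^k\big(C_j'(t)\,e^{C_j(t)}\big)$, applies the Leibniz rule to produce a sum of terms $\binom{k}{\ell}\,\partial_t^{k-\ell}(C_j')\cdot\partial_t^{\ell}(e^{C_j})$, plugs in the (strong) inductive hypothesis $|\partial_t^{\ell} e^{C_j}|\leqslant C_\ell\, j^{\ell/n} e^{-\nu_j B(t)}$ together with $|\mu_j|+|\nu_j|=\O(j^{1/n})$ from Proposition \ref{propseque}, and takes $\ell = k$ as the dominant term; this is precisely the ``alternative'' inductive argument you sketch at the end. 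Your main presentation via Fa\`a di Bruno instead produces the derivative in one shot: the crucial bookkeeping fact is that every term involves a product over a partition $(m_\ell)$ of $k$, so the exponent of $\max\{|\mu_j|,|\nu_j|\}$ is $\sum_\ell m_\ell \leqslant \sum_\ell \ell\, m_\ell = k$, yielding the $j^{k/n}$ bound after Proposition \ref{propseque}, while the factor $|e^{C_j(t)}| = e^{\mathrm{Re}\,C_j(t)} = e^{-\nu_j B(t)}$ supplies the exponential weight. Both arguments are elementary and sound; the inductive version is shorter to verify, while your Fa\`a di Bruno version makes the combinatorial structure of the bound explicit and avoids having to carry the strong induction hypothesis. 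One minor note: in your inductive sketch the displayed identity $\partial_t^{k+1} e^{C_j(t)} = \partial_t\big(C_j'(t)e^{C_j(t)}\big) = C_j''(t)e^{C_j(t)} + C_j'(t)\partial_t^k(\cdots)$ is garbled — it should read $\partial_t^{k+1} e^{C_j(t)} = \partial_t^k\big(C_j'(t)e^{C_j(t)}\big)$, followed by the Leibniz expansion — but since the surrounding prose makes the intended argument clear and your primary proof is complete, this does not affect correctness.
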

\begin{proof} For $k=0$ this is evident. If (\ref{prop 3-0}) is true for $\l \in \{0, 1, \ldots, k\}$, it follows from theorem \ref{propseque} that
\begin{align*}
\left | \partial_t^{k+1} e^{C_j(t)} \right|
             \leqslant & \sum_{\l=0}^{k} \binom{k}{\l} |\partial_t^{\l}(e^{C_j(t)}) \ \partial_t^{k-\l}( -\nu_j {b(t)} +i\mu_j a(t) ) |\\[2mm]
             \leqslant & \sum_{\l=0}^{k} \binom{k}{\l} C_{\l,a,b} \ j^{\l/n}  \ e^{-\nu_j B(t)} \\[2mm]
                       & \qquad \times \max \left\{\|\partial_t^{k-\l}b\|_\infty, \|\partial_t^{k-\l}b\|_\infty \right\}(|\nu_j| + |\mu_j|) \\[2mm]
                                           \leqslant & \sum_{\l=0}^{k} \binom{k}{\l} C'_{\l,a,b} \ j^{\l/n}  \ e^{-\nu_j B(t)} \ j^{\frac{1}{n}}
                                            \ \leqslant \ C_{k,a,b} \ j^{\frac{k+1}{n}} \ e^{-\nu_j B(t)}.
\end{align*}

\end{proof}

\begin{corollary}\label{coro-prop-exp}
For any $k\in \N_0$, there exist constants $C_1$ and $C_2$, depending only on $a$, $b$ and $k$, such that
\begin{align*}
\left| \partial_t^k \exp\left(\int_{t}^{t-s}c_j(\tau)d \tau\right) \right| \leqslant \ & C_1 \, j^{k/n} \exp\left(\nu_j\int_{t-s}^{t}b(\tau)d \tau\right), \mbox{ and} \\[3mm]
\left| \partial_t^k \exp\left(\int_{t}^{t+s}c_j(\tau)d \tau\right) \right| \leqslant \ & C_2 \, j^{k/n} \exp\left(-\nu_j\int_{t}^{t+s}b(\tau)d \tau\right),
\end{align*}
for all $s \in [0,2\pi],$ as $j \to \infty$.
\end{corollary}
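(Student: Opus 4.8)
The plan is to reduce both estimates to Proposition~\ref{prop-exp} (via the Leibniz rule) or, more self-containedly, to reprove the relevant bound by an induction identical in spirit to the one in Proposition~\ref{prop-exp}. The starting observation is the elementary identity
\begin{equation*}
\int_{t}^{t-s} c_j(\tau)\,d\tau = C_j(t-s) - C_j(t), \qquad \int_{t}^{t+s} c_j(\tau)\,d\tau = C_j(t+s) - C_j(t),
\end{equation*}
valid because $C_j' = c_j$. Since $c_j(\tau) = -\nu_j b(\tau) + i\mu_j a(\tau)$ with $\nu_j,\mu_j \in \R$, the real part of the first integral equals $-\nu_j\int_t^{t-s}b(\tau)\,d\tau = \nu_j\int_{t-s}^{t}b(\tau)\,d\tau$ and that of the second equals $-\nu_j\int_t^{t+s}b(\tau)\,d\tau$. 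Hence the exponential factors on the right-hand sides of the corollary are precisely the moduli $\bigl|\exp\bigl(\int_t^{t-s}c_j(\tau)\,d\tau\bigr)\bigr|$ and $\bigl|\exp\bigl(\int_t^{t+s}c_j(\tau)\,d\tau\bigr)\bigr|$, so it suffices to show, uniformly in $s\in[0,2\pi]$ and $t\in\T$, that
\begin{equation*}
\Bigl|\partial_t^k \exp\bigl(\int_t^{t\pm s} c_j(\tau)\,d\tau\bigr)\Bigr| \leqslant C_k\, j^{k/n}\, \Bigl|\exp\bigl(\int_t^{t\pm s} c_j(\tau)\,d\tau\bigr)\Bigr|, \quad j \to \infty.
\end{equation*}

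Write $\Phi_j(t,s) \doteq \int_t^{t-s}c_j(\tau)\,d\tau$ (the $t+s$ case being word for word the same). For every $m\geqslant 1$ one computes $\partial_t^m \Phi_j(t,s) = c_j^{(m-1)}(t-s) - c_j^{(m-1)}(t)$, where $c_j^{(m-1)}$ is the $(m-1)$-th derivative of $c_j$; since $c_j^{(m-1)}(\tau) = -\nu_j b^{(m-1)}(\tau) + i\mu_j a^{(m-1)}(\tau)$, this is bounded in modulus by $2\|b^{(m-1)}\|_\infty|\nu_j| + 2\|a^{(m-1)}\|_\infty|\mu_j| \leqslant C_m j^{1/n}$, using Theorem~\ref{propseque} and the smoothness of $a$ and $b$, with $C_m$ independent of $s$ and $t$. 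Then I would argue by induction on $k$ exactly as in the proof of Proposition~\ref{prop-exp}: the case $k=0$ is trivial, and for the inductive step one expands $\partial_t^{k+1}e^{\Phi_j} = \partial_t^k\bigl((\partial_t\Phi_j)\,e^{\Phi_j}\bigr)$ by Leibniz, so that each term carries a factor $\partial_t^{l+1}\Phi_j$ bounded by $C\,j^{1/n}$ and a factor $\partial_t^{k-l}e^{\Phi_j}$ bounded by $C_{k-l}\,j^{(k-l)/n}|e^{\Phi_j}|$ by the inductive hypothesis; summing the finitely many terms produces the factor $j^{(k+1)/n}|e^{\Phi_j}|$ as required.

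I do not expect a serious obstacle: the argument is essentially a routine repetition of Proposition~\ref{prop-exp}, and the only point deserving a word is that all constants are uniform in $s\in[0,2\pi]$, which is automatic because the sole $s$-dependent quantities entering the estimates are the differences $c_j^{(m-1)}(t\pm s)-c_j^{(m-1)}(t)$, controlled by $\|a^{(m-1)}\|_\infty$, $\|b^{(m-1)}\|_\infty$ and $|\mu_j|+|\nu_j|$ alone. As an alternative to the induction one may simply write $\exp\bigl(\int_t^{t-s}c_j\bigr) = e^{C_j(t-s)}\,e^{-C_j(t)}$, apply Proposition~\ref{prop-exp} to $\partial_t^l e^{C_j(t-s)} = (\partial^l e^{C_j})(t-s)$ and the mirror estimate $|\partial_t^l e^{-C_j(t)}| \leqslant C\,j^{l/n}e^{\nu_j B(t)}$ (proved the same way), and combine them termwise in the Leibniz expansion; this yields the factor $e^{\nu_j(B(t)-B(t-s))} = e^{\nu_j\int_{t-s}^t b}$ directly, and likewise $e^{-\nu_j\int_t^{t+s}b}$ in the other case.
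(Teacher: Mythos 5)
Your proof is correct and follows the route the paper clearly intends (the corollary is stated without proof as an immediate analogue of Proposition~\ref{prop-exp}): you correctly identify $\bigl|\exp\bigl(\int_t^{t\pm s}c_j\bigr)\bigr|$ as the claimed exponential factor and then run the same Leibniz/induction argument, with the needed uniformity in $s$ coming from the sup norms of the derivatives of $a,b$ and Proposition~\ref{propseque}. The alternative derivation via $e^{\Phi_j}=e^{C_j(t\mp s)}e^{-C_j(t)}$ and a direct application of Proposition~\ref{prop-exp} is equally valid and perhaps the cleanest way to read ``Corollary.''
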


\smallskip

Now, since $f$ is smooth, given any $\alpha \in \N_0$ and $\eta>0$, there is a positive constant $C$ and a natural number $j_0$, such that
\begin{equation}\label{f-smooth}
\sup_{t \in \T}|\partial^{\alpha}_t f_j(t)| \leqslant C \ j^{-\eta}, \ \ j \geqslant j_0.
\end{equation}

By corollary \ref{coro-prop-exp} and inequality (\ref{f-smooth}), for $k \in \N_0$, we have the following estimate to derivatives of
\eqref{sol1-Eq2-propA}
\begin{align}
|\partial^{k}_t u_j(t)| \leqslant & \ \Theta_j \int_{0}^{2\pi} \left| \partial^{k}_t \left(e^{\int_{t}^{t-s} c_j(\tau)d\tau}f_j(t-s)\right) ds \right| \nonumber \\[2mm]
                        \leqslant & \ \Theta_j \sum_{\l=0}^{k}\binom{k}{\l}
                                      \int_{0}^{2\pi}{ \Big| \partial^{\l}_t \big( e ^{\int_{t}^{t - s}c_j(\tau) d\tau} \big)\Big|
                                      \Big|\partial^{k-\l}_t f_j(t - s) \Big| ds} \nonumber  \\[2mm]
                        \leqslant &\ C \ \Theta_j \ j^{-\eta} \ \sum_{\l=0}^{k}\binom{k}{\l} \int_{0}^{2\pi}{ j^{l/n} \ e^{\nu_j\int_{t-s}^{t}b(\tau)d \tau} ds} \nonumber  \\[2mm]
  \leqslant & \ C \ \Theta_j \ j^{-\eta + k/n} \ \int_{0}^{2\pi} e^{\nu_j\int_{t-s}^{t}b(\tau)d \tau} ds. \label{estim1-derivatives}
\end{align}
where $\Theta_j = | 1 - e^{ - 2\pi c^0_j}|^{-1}.$

Analogously, for \eqref{sol1-Eq3-propA}, we have

\begin{align} \label{estim2-derivatives}
|\partial^{k}_t u_j(t)| \leqslant & \ \Theta_j \ e^{\nu_j 2 \pi b_0} \sum_{\l=0}^{k}\binom{k}{\l}
         \int_{0}^{2\pi}{ \Big| \partial^{\l}_t \big( e ^{\int_{t}^{t+s}c_j(\tau) d\tau} \big)\Big|
                                      \Big|\partial^{k-\l}_t f_j(t+s) \Big| ds}  \nonumber \\[2mm]
 \leqslant &\ C \ \Theta_j  \ e^{\nu_j 2 \pi b_0} \ j^{-\eta} \ \sum_{\l=0}^{k}\binom{k}{\l} \int_{0}^{2\pi}{ j^{l/n} \ e^{-\nu_j\int_{t}^{t+s}b(\tau)d \tau} ds} \nonumber \\[2mm]
 \leqslant & \ C \ \Theta_j \ e^{\nu_j 2 \pi b_0}  \ j^{-\eta + k/n} \ \int_{0}^{2\pi} e^{-\nu_j\int_{t}^{t+s}b(\tau)d \tau} ds.
\end{align}

From the next proposition we obtain the growth of sequences $\{\Theta_j\}$ and $\{\Theta_je^{\nu_j 2 \pi b_0}\}$.
\begin{proposition}\label{prop-theta}
If $b_0<0$, then
$$
\begin{array}{ll}
  \displaystyle \lim_{j \to +\infty} \Theta_j = 1,                    & \mbox{ if } \ \nu_j \to + \infty, \mbox{ and}\\ \\
  \displaystyle \lim_{j \to +\infty} \Theta_j e^{\nu_j 2\pi b_0} = 1, & \mbox{ if } \ \nu_j \to - \infty.
\end{array}
$$
\end{proposition}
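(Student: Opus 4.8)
The plan is to reduce everything to the single elementary observation that the modulus of $e^{-2\pi c_j^0}$ is governed only by the real part $-\nu_j b_0$, because $c_j^0 = -\nu_j b_0 + i a_0 \mu_j$ and the imaginary part contributes merely a unimodular factor. Concretely, I would first record
\[
e^{-2\pi c_j^0} = e^{2\pi \nu_j b_0}\, e^{-2\pi i a_0 \mu_j}, \qquad \text{so} \qquad \bigl|e^{-2\pi c_j^0}\bigr| = e^{2\pi \nu_j b_0},
\]
and keep the hypothesis $b_0 < 0$ in mind, since it is exactly what controls the sign of the exponent $2\pi\nu_j b_0$ in the two regimes. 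Note in passing that $|e^{-2\pi c_j^0}| = e^{2\pi\nu_j b_0}\neq 1$ once $\nu_j\neq 0$, hence $c_j^0\notin i\Z$ along the relevant tails, so $\Theta_j$ is well defined and formulas \eqref{sol1-Eq2-propA}--\eqref{sol1-Eq3-propA} apply.

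For the first statement I would assume $\nu_j \to +\infty$. Then $2\pi\nu_j b_0 \to -\infty$, so $e^{-2\pi c_j^0} \to 0$ in $\C$; by continuity of the modulus, $\bigl|1 - e^{-2\pi c_j^0}\bigr| \to 1$, and therefore $\Theta_j = \bigl|1 - e^{-2\pi c_j^0}\bigr|^{-1} \to 1$, as claimed.

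For the second statement I would assume $\nu_j \to -\infty$, so now $2\pi\nu_j b_0 \to +\infty$. The trick is to factor out the dominant exponential: since
\[
1 - e^{-2\pi c_j^0} = e^{-2\pi c_j^0}\bigl(e^{2\pi c_j^0} - 1\bigr),
\]
taking moduli gives $\bigl|1 - e^{-2\pi c_j^0}\bigr| = e^{2\pi\nu_j b_0}\,\bigl|e^{2\pi c_j^0} - 1\bigr|$. Because $\bigl|e^{2\pi c_j^0}\bigr| = e^{-2\pi\nu_j b_0} \to 0$, we have $\bigl|e^{2\pi c_j^0} - 1\bigr| \to 1$, and hence
\[
\Theta_j\, e^{\nu_j 2\pi b_0} = \frac{e^{2\pi\nu_j b_0}}{\bigl|1 - e^{-2\pi c_j^0}\bigr|} = \frac{1}{\bigl|e^{2\pi c_j^0} - 1\bigr|} \longrightarrow 1 .
\]
There is no genuine obstacle here: the proposition is a direct computation, and the only point requiring care is the sign bookkeeping that makes $e^{2\pi\nu_j b_0}$ tend to $0$ when $\nu_j\to+\infty$ and to $+\infty$ when $\nu_j\to-\infty$ — precisely the place where $b_0<0$ is used. (This is also why one must divide by $\Theta_j e^{\nu_j 2\pi b_0}$ rather than $\Theta_j$ alone in the divergent regime, which is the reason the normalizing factor $e^{\nu_j 2\pi b_0}$ appears in the estimate \eqref{estim2-derivatives}.)
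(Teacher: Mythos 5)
Your argument is correct and is essentially the paper's: both proofs rest on the single observation that $|e^{-2\pi c_j^0}| = e^{2\pi\nu_j b_0}$ tends to $0$ (resp.\ $+\infty$) when $\nu_j \to +\infty$ (resp.\ $-\infty$) under $b_0<0$. The paper writes out $\Theta_j^{-2}=e^{4\pi\nu_j b_0}-2e^{2\pi\nu_j b_0}\cos(2\pi a_0\mu_j)+1$ and normalizes by $e^{4\pi\nu_j b_0}$ in the second case, which is exactly your factorization $1-e^{-2\pi c_j^0}=e^{-2\pi c_j^0}(e^{2\pi c_j^0}-1)$ phrased in real coordinates.
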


\begin{proof}
Observe that $$\Theta_j=\left( e^{\nu_j 4 \pi b_0}-2 e^{\nu_j 2\pi b_0}\cos(2\pi a_0\mu_j) + 1\right)^{-1/2},$$ and that the two exponencial terms go to zero, as $j \rightarrow \infty$, provided that $b_0<0$ and $\nu_j \rightarrow + \infty$.

Analogously, $$\Theta_j e^{\nu_j 2\pi b_0} = \left(1 -2 e^{-\nu_j 2\pi b_0}\cos(2\pi a_0\mu_j) + e^{-4 \nu_j \pi b_0}\right)^{-1/2},$$ and here the exponential terms also go to zero as $j \rightarrow \infty$, if $b_0<0$ and $\nu_j \rightarrow - \infty$.

\end{proof}

\smallskip

The next step is to present and demonstrate the three theorems that we have announced. In order to do this, we are going to split the proofs in two subsections, namely: \textit{(GH) and Diophantine phenomena}, and \textit{Change of sign and super-logarithmic growth}.

\subsection{(GH) and Diophantine phenomena}

\begin{theorem}\label{prop-1}
If $b$ does not change sign and $b \not \equiv 0$, then  $L$ is (GH).
\end{theorem}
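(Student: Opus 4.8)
The plan is to show that when $b$ does not change sign and $b \not\equiv 0$, every Fourier coefficient $u_j$ of a distributional solution $u$ of $iLu = f$ with $f \in C^\infty$ is controlled polynomially in $j$, which by Proposition~\ref{prop-smooth-2matr} forces $u \in C^\infty(\T \times M)$. Without loss of generality, suppose $b \geqslant 0$ on $\T$; the case $b \leqslant 0$ is symmetric. Since $b \not\equiv 0$, we have $b_0 = (2\pi)^{-1}\int_0^{2\pi} b(\tau)\,d\tau > 0$, and moreover $\int_{t-s}^t b(\tau)\,d\tau \geqslant 0$ and $\int_t^{t+s} b(\tau)\,d\tau \geqslant 0$ for all $t$ and all $s \in [0,2\pi]$, with $\int_{t-2\pi}^{t} b = 2\pi b_0$. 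This sign information is exactly what makes one of the two integral representations \eqref{sol1-Eq2-propA}, \eqref{sol1-Eq3-propA} give a bounded exponential factor, depending on the sign of $\nu_j$.

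The key dichotomy is on the sign of $\nu_j$ (recall $|\nu_j| \to \infty$, so only finitely many $\nu_j$ vanish, and those finitely many equations are handled trivially since then $c_j^0 = ia_0\mu_j$ contributes no growth and the equation is the plain periodic transport equation). First I would treat the case $\nu_j \to +\infty$ along a subsequence (or for all large $j$): here I use representation \eqref{sol1-Eq2-propA}, whose exponential kernel is bounded by $\exp\bigl(\nu_j \int_{t-s}^t b(\tau)\,d\tau\bigr)$ by Corollary~\ref{coro-prop-exp}. But with $b \geqslant 0$ and $\nu_j > 0$ this exponent is \emph{positive} — so instead I should use \eqref{sol1-Eq3-propA}, whose kernel is bounded by $\exp\bigl(-\nu_j \int_t^{t+s} b(\tau)\,d\tau\bigr) \leqslant 1$; however that representation carries the prefactor $\Theta_j e^{2\pi\nu_j b_0}$, which blows up. So one must pair the right representation with the right sign: for $\nu_j \to +\infty$ use \eqref{sol1-Eq2-propA} together with the estimate \eqref{estim1-derivatives}, where $\Theta_j = |1 - e^{-2\pi c_j^0}|^{-1} = \bigl(e^{-4\pi\nu_j b_0} - 2e^{-2\pi\nu_j b_0}\cos(2\pi a_0\mu_j) + 1\bigr)^{-1/2} \to 1$, and bound the integral $\int_0^{2\pi} e^{\nu_j \int_{t-s}^t b(\tau)\,d\tau}\,ds$; but this integrand can be as large as $e^{2\pi\nu_j b_0}$, so this naive pairing fails too. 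The correct observation is that $\Theta_j \int_0^{2\pi} e^{\nu_j \int_{t-s}^t b}\,ds$ is \emph{bounded uniformly in $j$} when $\nu_j \to +\infty$: writing $\beta(t,s) = \int_{t-s}^t b(\tau)\,d\tau$, this is an increasing function of $s$ with $\beta(t,0)=0$ and $\beta(t,2\pi) = 2\pi b_0$, and one can split $[0,2\pi]$ into the region where $\beta(t,s) \leqslant 2\pi b_0 - \varepsilon$ (contributing at most $2\pi e^{\nu_j(2\pi b_0 - \varepsilon)}$, absorbed by $\Theta_j^{-1} \sim e^{2\pi \nu_j b_0}$... ) — more cleanly, since $b \geqslant 0$, near $s = 2\pi$ one has $\beta(t,s) = 2\pi b_0 - \int_{t}^{t-s+2\pi} b$, so with the change of variables $\sigma = 2\pi - s$ the dangerous part becomes $e^{2\pi\nu_j b_0}\int_0^{\text{small}} e^{-\nu_j \int_t^{t+\sigma}b}\,d\sigma$, and multiplied by $\Theta_j \sim e^{-2\pi\nu_j b_0}$ this is $O(1)$. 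Hence $|\partial_t^k u_j(t)| \leqslant C j^{-\eta + k/n}$, which is rapidly decreasing.

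Symmetrically, when $\nu_j \to -\infty$ I would use representation \eqref{sol1-Eq3-propA} and estimate \eqref{estim2-derivatives}: there the kernel bound is $\exp\bigl(-\nu_j \int_t^{t+s} b(\tau)\,d\tau\bigr)$, which for $\nu_j < 0$ and $b \geqslant 0$ is again positive, but now the prefactor is $\Theta_j e^{2\pi\nu_j b_0}$, which by Proposition~\ref{prop-theta} (applied with the roles adjusted, or by the explicit formula $\Theta_j e^{2\pi\nu_j b_0} = (1 - 2e^{-2\pi\nu_j b_0}\cos(2\pi a_0\mu_j) + e^{-4\pi\nu_j b_0})^{-1/2}$) tends to $1$ since $-\nu_j b_0 \to +\infty$... wait, here $-2\pi\nu_j b_0 \to +\infty$ so $e^{-2\pi\nu_j b_0} \to \infty$ and this does \emph{not} go to $1$; instead the analogous bounded-product argument $\Theta_j e^{2\pi\nu_j b_0}\int_0^{2\pi} e^{-\nu_j\int_t^{t+s}b}\,ds = O(1)$ must be invoked, exactly mirroring the previous paragraph. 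In both cases one gets $\sup_t|\partial_t^k u_j(t)| = O(j^{-\eta + k/n})$ for every $\eta > 0$ and every $k$, so the series for $u$ converges in $C^\infty(\T\times M)$ by Proposition~\ref{prop-smooth-2matr}, proving $L$ is (GH).

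The main obstacle is precisely this interplay between the exploding prefactor $\Theta_j$ (or $\Theta_j e^{2\pi\nu_j b_0}$) and the exploding integral: neither factor alone is bounded, and the proof hinges on showing their product is $O(1)$, uniformly in $t$, using only $b \geqslant 0$ and $b_0 > 0$. I expect the cleanest route is to bound $\int_0^{2\pi} e^{\pm\nu_j \beta(t,s)}\,ds$ by comparison with its value near the endpoint where $\beta$ is maximal, exploiting that $\beta(t,\cdot)$ is monotone with controlled endpoint values; a Laplace-type estimate suffices and no lower bound on $b$ (away from zero) is needed. One should also be slightly careful that $b$ may vanish on a large set — but monotonicity of $s \mapsto \int_{t-s}^t b$ is all that is used, so this causes no trouble.
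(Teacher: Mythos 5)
Your overall strategy coincides with the paper's --- pass to the scalar ODEs, use the two explicit solution formulas \eqref{sol1-Eq2-propA}/\eqref{sol1-Eq3-propA}, control the exponential kernel and the prefactor $\Theta_j$, and invoke Proposition~\ref{prop-smooth-2matr} --- and your final estimates are correct. The difficulties you run into mid-argument, however, come from sign slips, not from genuine obstructions. Since $c_j^0 = -\nu_j b_0 + i a_0\mu_j$, one has $|e^{-2\pi c_j^0}| = e^{2\pi\nu_j b_0}$, so with your normalization $b \geqslant 0$ (hence $b_0 > 0$) and $\nu_j \to +\infty$ the factor $\Theta_j = |1 - e^{-2\pi c_j^0}|^{-1}$ does \emph{not} tend to $1$; it decays like $e^{-2\pi\nu_j b_0}$, and consequently $\Theta_j e^{2\pi\nu_j b_0} \to 1$ rather than blowing up as you claim. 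Because of that error you abandoned the pairing that actually works cleanly: with $b\geqslant 0$ and $\nu_j\to+\infty$, formula \eqref{sol1-Eq3-propA} gives an integrand $\exp(-\nu_j\int_t^{t+s}b)\leqslant 1$ \emph{and} a prefactor $\Theta_j e^{2\pi\nu_j b_0}\to 1$, so both factors are already bounded and no Laplace-type cancellation is needed; the symmetric statement holds for $\nu_j\to-\infty$ using \eqref{sol1-Eq2-propA} and $\Theta_j\to 1$. The paper avoids this entirely by normalizing $b\leqslant 0$ (via $t\mapsto -t$, which preserves (GH)), so that $b_0<0$ and Proposition~\ref{prop-theta} directly gives boundedness of the prefactor while the exponential integrals are at most $2\pi$; in the paper's pairing ($\eqref{sol1-Eq2-propA}$ for $\nu_j\to+\infty$, $\eqref{sol1-Eq3-propA}$ for $\nu_j\to-\infty$) the two factors are bounded individually. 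Your ``bounded product of two unbounded factors'' observation is nonetheless valid --- e.g.\ $\Theta_j \int_0^{2\pi}e^{\nu_j\int_{t-s}^t b}\,ds \leqslant 2\pi\, e^{2\pi\nu_j b_0}/(e^{2\pi\nu_j b_0}-1)\to 2\pi$ --- but it is strictly more work than the paper's route, and the elaborate endpoint/change-of-variables analysis you sketch is unnecessary once $\Theta_j$ is written with the correct sign. One small additional imprecision: the finitely many $j$ with $c_j^0\in i\Z$ are dealt with simply because they are \emph{finitely many} (since $b_0\neq 0$ forces $\nu_j=0$ there, which occurs only finitely often); it is not that the equation ``contributes no growth,'' since in that case uniqueness of the periodic solution can fail.
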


\begin{proof}
Note that
$$
  c^0_j \in i \Z \ \Leftrightarrow \ b_0 \nu_j = 0 \mbox{ and } a_0 \mu_j \in \Z.
$$

Since $b\not \equiv 0$, $b$ does not change sign and $|\nu_j| \to \infty$, thus $b_0 \neq 0$ and $\nu_j=0$ only for a finite number of indexes.
It follows that the set
$$
\{ j \in \N; c^0_j \in i \Z\} \mbox{ \ is finite.}
$$

Hence, to prove that the $x$-Fourier coefficients $u_j(t)$ satisfy (\ref{2-prop-smooth-2}), it is enough to study the behaviour of the solutions (\ref{sol1-Eq2-propA}) or
(\ref{sol1-Eq3-propA}), for $j$ sufficiently large.

\smallskip
We can assume, without loss of generality, that
\begin{equation}\label{b-leq-0-imply-b0<0}
 b(t)\leqslant 0, t \in [0,2\pi], \mbox{ \ wich implies \ }   b_0 < 0.
\end{equation}

Indeed, if $b(t)\geqslant 0$, by the change of variables $(t, x) \mapsto (-t, x )$, the operator $iL$ becomes
\begin{equation*}
i\widetilde{L} = -\partial_t + i\widetilde{a}(t)p(x,D_x) - \widetilde{b}(t) q(x,D_x),
\end{equation*}
where  $\widetilde{b}(t) = - b(-t) \leqslant 0$, and clearly, $iL$ is (GH) if, and only if, $i\widetilde{L}$ is (GH).

\smallskip
Now, to finish the proof of the proposition, let us show that the derivatives of $u_j$ satisfy the condition \eqref{2-prop-smooth-2} of proposition \ref{prop-smooth-1}, by separately analyzing their behaviour when ${\nu_j \to + \infty}$ and ${\nu_j \to - \infty}$.

\smallskip
First, let $s_0 \in [0,2\pi]$ be the point of maximum of $b$, that is,
$$
b(s_0) = \max_{ t \in [0,2\pi]}b(t) \leqslant 0.
$$

Then, for all $s \in [0,2\pi]$, we have
\begin{equation*}
\int_{t-s}^{t}b(\tau) d\tau \leqslant b(s_0)s \leqslant 0 \ \mbox{ and } \ \int_{t}^{t+s}b(\tau) d\tau \leqslant  b(s_0)s \leqslant 0.
\end{equation*}

When ${\nu_j \to + \infty}$, there is a natural $j_1$ such that $\nu_j > 0,$ for all $j \geqslant j_1,$ hence
$$
  \int_{0}^{2\pi} e^{\nu_j\int_{t-s}^{t}b(\tau)d \tau} ds \leqslant 2\pi, \ \ j \geqslant j_1.
$$

By proposition \ref{prop-theta}, the sequence $\{\Theta_j\}$ is bounded, therefore, from \eqref{estim1-derivatives} we have
\begin{equation}\label{estimate1-Dkuj}
  |\partial^{k}_t u_j(t)| \leqslant  C \ j^{-\eta + k/n}, \ \ j \geqslant j_1.
\end{equation}

When $\nu_j \to - \infty$, we use the equivalent expression \eqref{sol1-Eq3-propA}, which gives us the estimate \eqref{estim2-derivatives}. In this case,
there is a natural $j_2$ such that $\nu_j < 0,$ for all $j \geqslant j_2,$ hence
\begin{equation*}
\int_{0}^{2\pi} e^{-\nu_j\int_{t}^{t+s}b(\tau)d \tau} ds \leqslant 2\pi, \ \ j \geqslant j_2.
\end{equation*}

By proposition \ref{prop-theta}, the sequence $\{\Theta_j e^{\nu_j 2\pi b_0}\}$ is bounded and we have
\begin{equation}\label{estimate2-Dkuj}
|\partial^{k}_t u_j(t)| \leqslant \ C \ j^{-\eta + k/n}, \ \ j \geqslant j_2,
\end{equation}
for some $j_2 \in \N$.

\smallskip
Finally, from \eqref{f-smooth}, \eqref{estimate1-Dkuj} and \eqref{estimate2-Dkuj}, given any $k \in \N_0$, there is a constant $C>0$ and a positive integer $\eta$ satisfying $-\eta + k/n \leqslant -N$, such that
\begin{equation*}
|\partial^{k}_t u_j(t)| \leqslant  C \ j^{-N}, \ j \geqslant j_0,
\end{equation*}
where $j_0 = \max \{ j_1 , j_2 \}$.

\end{proof}


\begin{theorem}\label{prop-2}
If $b\equiv 0$, then the operator $L$ is (GH) if, and only if, the set $\Gamma_{a_0} \doteq \{ j\in \N; \ \mu_j a_0 \in \Z \}$ is finite and
$a_0$ is non-Liouville with respect to the sequence $\{\mu_j\}$.
\end{theorem}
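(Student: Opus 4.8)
When $b \equiv 0$ the operator becomes $iL = \partial_t + i a(t) p(x,D_x)$, and on the $x$-Fourier side the equations \eqref{Eq1-propA} reduce to $\partial_t u_j(t) + i\mu_j a(t) u_j(t) = f_j(t)$. The plan is to analyze the solvability of each scalar periodic ODE separately, keeping careful track of growth in $j$. The homogeneous solution has integrating factor $e^{i\mu_j A(t)}$, and the periodicity obstruction is governed by the quantity $1 - e^{-2\pi i a_0 \mu_j}$; here $c_j^0 = i a_0 \mu_j$, so $c_j^0 \in i\Z$ exactly when $a_0 \mu_j \in \Z$, i.e.\ when $j \in \Gamma_{a_0}$.

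First I would prove necessity of the two conditions. If $\Gamma_{a_0}$ is infinite, then for each $j \in \Gamma_{a_0}$ the homogeneous equation has the nontrivial periodic solution $u_j(t) = e^{-i\mu_j A(t)}$ (note $A(2\pi) = 2\pi a_0$ and $a_0\mu_j \in \Z$, so this is genuinely $2\pi$-periodic); summing these over an infinite subset of $\Gamma_{a_0}$ — using Proposition \ref{propseque} to see $|\mu_j| = \O(j^{1/n})$, so the derivatives $\partial_t^k u_j$ grow only polynomially — produces $u \in \D(\T\times M) \setminus C^\infty(\T\times M)$ with $iLu = 0$, contradicting (GH). (One should check $u$ is not smooth: the coefficients $u_j(t)$ do not decay, so condition \eqref{2-prop-smooth-2}(iii) fails while \eqref{2-prop-smooth-2}(vi) holds.) Next, if $a_0$ is Liouville with respect to $\{\mu_j\}$, then for every $\delta, C, R$ the inequality \eqref{nonLiouville} fails, so along some subsequence $j_k \to \infty$ we have $\inf_{\ell \in \Z}|a_0\mu_{j_k} + \ell| < k^{-k}$, say; for these indices $|1 - e^{-2\pi i a_0\mu_{j_k}}|$ is extremely small (comparable to the distance $\mathrm{dist}(a_0\mu_{j_k}, \Z)$). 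Choosing $f$ with $f_{j_k}(t) \equiv \varepsilon_{j_k}$ rapidly decaying but $\Theta_{j_k} = |1 - e^{-2\pi i a_0\mu_{j_k}}|^{-1}$ growing faster than any polynomial, formula \eqref{sol1-Eq2-propA} forces $\sup_t |u_{j_k}(t)| \geq c\,\Theta_{j_k}\varepsilon_{j_k}$ to violate the rapid-decay condition; so $iLu = f$ is smooth but $u$ is not, contradicting (GH). The care needed here is to choose the $\varepsilon_{j_k}$ so that $f$ really is smooth (decay faster than every power of $j$) yet $\Theta_{j_k}\varepsilon_{j_k}$ still does not decay — possible precisely because $\Theta_{j_k}$ is super-polynomial.

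For sufficiency, assume $\Gamma_{a_0}$ finite and $a_0$ non-Liouville with constants $\delta, C, R$ as in \eqref{nonLiouville}. Let $u \in \D(\T\times M)$ with $iLu = f \in C^\infty$. For the finitely many $j \in \Gamma_{a_0}$ the equation may fail to have a solution at all, but since $u$ exists by hypothesis, on those finitely many fibers $u_j$ is simply some fixed smooth function, contributing nothing to the asymptotics. For $j \notin \Gamma_{a_0}$ with $j$ large, $u_j$ is given by \eqref{sol1-Eq2-propA} with $c_j(\tau) = i\mu_j a(\tau)$, and since the exponential $|e^{\int_t^{t-s} c_j(\tau)d\tau}| = |e^{i\mu_j(A(t-s)-A(t))}| = 1$ (there is no $b$ term), the integral is bounded by $2\pi \sup_t |f_j(t-s)|$, which is $\O(j^{-\eta})$ for every $\eta$ by \eqref{f-smooth}. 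The key estimate is then $\Theta_j = |1 - e^{-2\pi i a_0\mu_j}|^{-1} \leq \big(\,2\,|\sin(\pi a_0\mu_j)|\,\big)^{-1} \leq C' \big(\mathrm{dist}(a_0\mu_j,\Z)\big)^{-1} \leq C'' j^{\delta}$ for $j \geq R$, using \eqref{nonLiouville}. Combining with corollary \ref{coro-prop-exp} (with the $b$-terms absent) gives $|\partial_t^k u_j(t)| \leq C\,\Theta_j\, j^{k/n}\, \sup_t|\partial_t^{k-\l}f_j| \leq C\, j^{\delta + k/n - \eta}$, and choosing $\eta$ large makes this $\O(j^{-N})$ for any prescribed $N$; hence $u$ satisfies \eqref{2-prop-smooth-2}(iii) and is smooth by Proposition \ref{prop-smooth-1}.

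The main obstacle is the necessity direction for the Liouville case: one must build a single smooth $f$ on $\T\times M$ (not just fiberwise data) whose fiber coefficients decay rapidly yet, after division by the super-polynomially small $|1 - e^{-2\pi i a_0\mu_j}|$ along the bad subsequence, yield $u_j$ that do not decay — and simultaneously verify, via Proposition \ref{prop-smooth-2matr} or \ref{prop-smooth-1}, that the resulting $u$ genuinely lies in $\D(\T\times M)$ but not in $C^\infty(\T\times M)$. Balancing these two competing requirements, while also handling derivatives $\partial_t^k$ uniformly, is the delicate part; the finite-resonance obstruction and the sufficiency estimates are comparatively routine given Proposition \ref{propseque} and Corollary \ref{coro-prop-exp}.
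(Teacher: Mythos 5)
Your sufficiency argument and your treatment of the resonance condition (infinite $\Gamma_{a_0}$) are both correct and essentially match the paper; in fact for the resonance case you work directly with $u_j(t)=e^{-i\mu_j A(t)}$ instead of first reducing to the normal form $L_{a_0}$ and using $e^{-ia_0\mu_{j}t}$, which is a small simplification.

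The gap is in the necessity direction for the Liouville case. You propose to take $f_{j_k}(t)\equiv\varepsilon_{j_k}$ constant and claim $\sup_t|u_{j_k}(t)|\ge c\,\Theta_{j_k}\varepsilon_{j_k}$, counting on the prefactor $\Theta_{j_k}=|1-e^{-2\pi i a_0\mu_{j_k}}|^{-1}$ to blow up. But the prefactor is multiplied by an oscillatory integral that cancels it almost exactly. Concretely, for the constant-coefficient operator $L_{a_0}$ and constant datum $f_j\equiv\varepsilon_j$, formula \eqref{sol1-Eq2-propA} gives
\[
u_j(t)=(1-e^{-2\pi i a_0\mu_j})^{-1}\,\varepsilon_j\int_0^{2\pi}e^{-ia_0\mu_j s}\,ds
=\frac{\varepsilon_j}{\,i\,a_0\mu_j\,},
\]
so $u_j$ decays exactly as fast as $\varepsilon_j$ (faster, in fact, since $|\mu_j|\to\infty$), and no singular solution appears; your claimed lower bound $\sup_t|u_{j_k}|\ge c\,\Theta_{j_k}\varepsilon_{j_k}$ is simply false. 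The same cancellation (with the extra oscillatory factor $e^{i\mu_j(A(t-s)-A(t))}$) obstructs the variable-coefficient version of this plan. The point you are missing is that the datum must itself oscillate at the near-resonant frequency: writing $\tau_k\in\Z$ with $|a_0\mu_{j_k}-\tau_k|<j_k^{-k/2}$, the paper takes $u_{j_k}(t)=e^{-i\tau_k t}$ (so $|u_{j_k}|\equiv 1$, hence $u\notin C^\infty$, while $|\partial_t^\ell u_{j_k}|=|\tau_k|^\ell$ grows only polynomially so $u\in\D$) and then defines $f_{j_k}(t)=(a_0\mu_{j_k}-\tau_k)e^{-i\tau_k t}$, which decays faster than any power of $j_k$ because $|a_0\mu_{j_k}-\tau_k|$ does; one checks directly $L_{a_0}u=f$. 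The resonant phase $e^{-i\tau_k t}$ is what keeps $u_{j_k}$ from decaying and is not something that can be reproduced by tuning the amplitudes $\varepsilon_{j_k}$ alone.

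Two small additional remarks. First, in the sufficiency direction you should justify that, for the finitely many $j\in\Gamma_{a_0}$, the given solution $u_j$ is automatically smooth: the ODE $\partial_t u_j+i\mu_j a(t)u_j=f_j$ with smooth $f_j$ has all distributional periodic solutions smooth (this is a regularity statement, not trivial but standard); you state it as obvious, which is fine but worth a word. Second, the bound $\Theta_j\le C'(\operatorname{dist}(a_0\mu_j,\Z))^{-1}$ you use is exactly the content of the paper's Proposition \ref{prop-liou}, so that part is the same in substance.
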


\begin{proof} Let $u \in \D(\T \times M)$ be a solution of $iL u = f\in C^{\infty}(\T \times M)$. Following the steps in the introduction of this section, we are led to the following sequence of ordinary differential equations
\begin{equation}\label{odepropB}
\partial_tu_j(t) + i a(t) \mu_j u_j(t) = f_j(t), \ j \in \N.
\end{equation}

Since $\Gamma_{a_0}$ is finite, we have $c^0_j = i a_0 \mu_j \in i\Z$ only for a finite number of indexes, hence it is enough to study the solutions (\ref{sol1-Eq2-propA}) or (\ref{sol1-Eq3-propA}), when $j\to \infty$.

\smallskip
By hypothesis $b\equiv 0$, then $b_0=0$ and both expressions \eqref{estim1-derivatives} and \eqref{estim2-derivatives} become
\begin{equation} \label{propB-ineq1}
|\partial^{k}_t u_j(t)| \leqslant C \ j^{-\eta + k/n} \ \Theta_j, \ \forall \eta>0, \ \ j \to \infty.
\end{equation}

The next result studies the growth of sequence $\{\Theta_j\}$ (the proof is given below).
\begin{proposition}\label{prop-liou}
Let  $\{\beta_j \}_{j \in \N}$ be a sequence of real numbers. Then, for each $j \in \N$ there exist $\l(j) \in \Z$ such that
\begin{equation}\label{prop 2-0}
|1 - e^{2\pi i \beta_j}| \geqslant 4 \ | \beta_j + \l(j)|.
\end{equation}
\end{proposition}

\smallskip

Now, by \eqref{prop 2-0} we obtain, for any $j \in \N$, an integer $\l(j)$ such that
$$|1 - e^{-2\pi c^0_j}| \geqslant 4 \ |a_0 \mu_j + \l(j)|.$$

Thus, the hypothesis \eqref{nonLiouville} implies
\begin{eqnarray}\label{propB-ineq2}
\Theta_j = |1 - e^{-2\pi c^0_j}|^{-1} & \leqslant & C \ |a_0 \mu_j + \l(j)|^{-1} \\[3mm]
                                      & \leqslant & C \inf_{\l \in \Z} |a_0 \mu_j +\l|^{-1} \ \leqslant \  C j^{\delta}, \nonumber
\end{eqnarray}
for $j$ sufficiently large.

\smallskip
Thus, by (\ref{propB-ineq1}) and (\ref{propB-ineq2})  we obtain
\begin{equation*}
|\partial^{k}_t u_j(t)| \leqslant C \ j^{-\eta + k/n + \delta},  \ \forall \eta>0, \ j \to \infty,
\end{equation*}
which implies $u \in C^{\infty}(\T \times M)$.

\smallskip
To prove the sufficiency, let $L_{a_0}$ be the operator
\begin{equation*}
L_{a_0} = D_t + a_0 p(x,D_x).
\end{equation*}

Let us suppose by contradiction that $\Gamma_{a_0}$ is an infinite set and write
\begin{equation*}
\Gamma_{a_0} = \{ j_1 < j_2 < \ldots < j_k < \ldots \}.
\end{equation*}

Obviously we are supposing that $a_0 \neq 0,$ otherwise $L_{a_0}=D_t$ is trivially non (GH).

Consider the following sequence of functions in $C^{\infty}(\T)$
\begin{equation*}
u_j(t) \doteq \left\{
\begin{array}{l}
e^{-ia_0\mu_{j_k} t}, \textrm{ if } j = j_k, \\[3mm]
0, \textrm{ if } \ j\neq j_k.
\end{array} \right.
\end{equation*}

We have $|u_{j_k}(t)|\equiv 1$ for any $k \in \N$, and fixed $\l \in \N_0$ we obtain
\begin{equation*}
|\partial^{\l}_t u_{j_k}(t)| = |a_0 \mu_{j_k}|^{\l} \leqslant \ C \ j_{k}^{\l / n}, \ k \to \infty,
\end{equation*}
therefore $u_j(t)$ defines an element $u \in \D(\T \times M) \setminus C^{\infty}(\T \times M)$. On the other hand,
\begin{equation*}
 L_{a_0} \left (\sum_{j \in \N} u_j(t) \varphi_j(x)  \right) \
          = \sum_{k \in \N} [D_t(e^{-ia_0\mu_{j_k} t}) + a_0\mu_{j_k}e^{-ia_0\mu_{j_k}t}] \varphi_{j_k}(x) \ = \ 0,
\end{equation*}
hence $L_{a_0}$ is not (GH) and by reduction to normal form $L$ is not (GH).

\

Now, let us suppose by contradiction that $a_0$ is Liouville with respect to the sequence $\{\mu_j\}$. In this case, there is a subsequence
$\{\mu_{j_k}\}$ and a sequence  $\{\tau_k\} \subset \Z$ such that
\begin{equation}\label{non-Dioph-1}
|a_0 \mu_{j_k} - \tau_k | < {j_k}^{-k/2}, \ k \to \infty.
\end{equation}

Particularly, it follows from (\ref{non-Dioph-1}) that
\begin{equation}\label{non-Dioph-2}
|\tau_k | = \O({j_k}^{-k/2 + 1/n}), \ k \to \infty.
\end{equation}

Define sequences of functions $\{u_j(t)\}$ and $\{f_j(t)\}$ by
\begin{eqnarray*}
u_j(t) & = & \left\{
\begin{array}{l}
e^{-i\tau_k t}, \textrm{ if } j = j_k, \\[2mm]
0, \textrm{ otherwise. }
\end{array} \right.
\\
\
\\
f_j(t) & = & \left\{
\begin{array}{l}
(a_0 \mu_{j_k} - \tau_k)e^{-i\tau_k t}, \textrm{ if } j = j_k, \\[2mm]
0, \textrm{ otherwise. }
\end{array} \right.
\end{eqnarray*}

Note that $|u_{j_k}(t)| \equiv 1$, for any $t \in \T,$ and from (\ref{non-Dioph-2})
\begin{align*}
|\partial^{\l}_t u_{j_k}(t)| =  |\tau_k|^{\l} \leqslant C \ {j_k}^{-k/2+ 1/n}, \ k \to \infty,
\end{align*}
for any $\l \in \N$. Thus,  $\{u_j(t)\}$ defines an element $u \in \D(\T \times M) \setminus C^{\infty}(\T \times M)$.

But, from (\ref{non-Dioph-1}) and (\ref{non-Dioph-2}), for any  $\l \in \N_0$, we have:
\begin{align*}
|\partial^{\l}_t f_{j_k}(t)| \leqslant & |\tau_k|^{\l} |a_0 \mu_{j_k} - \tau_k| \\[2mm]
                             \leqslant & C \ j_k^{-k/2} \ {j_k}^{-k/2 + 1/n} \\[2mm]
                             \leqslant & C \ {j_k}^{-k + 1/n}, \ k \to \infty,
\end{align*}
then  $\{f_j(t)\}$ defines a function $f \in C^{\infty}(\T \times M)$,  such that $L_{a_0} u = f$; thus  $L_{a_0}$ is not (GH) and consequently $L$ is not (GH).

\end{proof}

\begin{corollary}
Admit that $\{\nu_j\}$ satisfies \eqref{boundhyp-2}. If $b_0\neq 0$, then $L$ is (GH). Otherwise, if $b_0=0$, $L$ is (GH) if, and only if,
\begin{enumerate}
\item[i.]  $\Gamma_{a_0}$ is a finite set, and

\item[ii.]  $a_0$ is non-Liouville with respect to the sequence $\{\mu_j\}$.
\end{enumerate}
\end{corollary}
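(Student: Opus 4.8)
The plan is to deduce the statement directly from Theorem~\ref{GH-conjugation-full} together with Theorems~\ref{prop-1} and \ref{prop-2}, since \eqref{boundhyp-2} is precisely the hypothesis needed to invoke the reduction to normal form. First I would note that the sequences $\{\mu_j\}$ and $\{\nu_j\}$ attached to $L_{a_0,b_0}$ coincide with those attached to $L$ (the operators $p(x,D_x)$ and $q(x,D_x)$ are untouched by the reduction), so \eqref{boundhyp-2} and the standing assumption $|\nu_j|\to\infty$ remain in force for $L_{a_0,b_0}$; by Theorem~\ref{GH-conjugation-full} it therefore suffices to decide when $L_{a_0,b_0}=D_t+a_0p(x,D_x)+ib_0q(x,D_x)$ is (GH).

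For the case $b_0\neq 0$ I would view $L_{a_0,b_0}$ as an operator of the form \eqref{L-dim-1} with the \emph{constant} coefficient functions $a(t)\equiv a_0$ and $b(t)\equiv b_0$. A nonzero constant function does not change sign and is not identically zero, so Theorem~\ref{prop-1} applies verbatim and yields that $L_{a_0,b_0}$ is (GH); hence $L$ is (GH).

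For the case $b_0=0$ we have $L_{a_0,b_0}=D_t+a_0p(x,D_x)$, which is exactly the operator treated in Theorem~\ref{prop-2} with $b\equiv 0$ (reading $a_0$ as the constant coefficient function, whose mean value over $\T$ is $a_0$ itself, so that the resonance set $\Gamma_{a_0}$ and the non-Liouville condition \eqref{nonLiouville} are literally the same objects as in the statement of the corollary). Theorem~\ref{prop-2} then gives that $L_{a_0,b_0}$ is (GH) if and only if $\Gamma_{a_0}$ is finite and $a_0$ is non-Liouville with respect to $\{\mu_j\}$. Combining this with the equivalence provided by Theorem~\ref{GH-conjugation-full} closes the proof.

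The only point requiring a moment of care — and the closest thing to an obstacle — is the bookkeeping that the quantities named in the corollary ($a_0$, $b_0$, $\Gamma_{a_0}$, the non-Liouville property, and the sequences $\{\mu_j\}$, $\{\nu_j\}$) are unchanged when passing from $L$ to $L_{a_0,b_0}$, together with the observation that the earlier theorems, stated for general $a,b\in C^\infty(\T)$, remain valid when $a$ and $b$ are constants. Once this is noted, the corollary is an immediate concatenation of the three theorems and requires no new estimate.
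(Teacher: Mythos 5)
Your argument is correct and is essentially identical to the paper's own proof of this corollary: the paper likewise invokes the reduction to normal form (Theorem~\ref{GH-conjugation-full}) to pass to $L_{a_0,b_0}$, and then applies Theorem~\ref{prop-1} for $b_0\neq 0$ and Theorem~\ref{prop-2} for $b_0=0$. Your additional bookkeeping remarks, while not spelled out in the paper, are accurate and harmless.
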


\begin{proof}
From condition \eqref{boundhyp-2} we can apply the reduction to normal form which implies that $L$ is (GH) if, and only if, $L_{a_0,b_0}$ is (GH). Now, from propositions \ref{prop-1} and \ref{prop-2} we are going to obtain the proof.

\end{proof}

\subsection*{Proof of proposition \ref{prop-liou}}

First, for any $j \in \N$, there exists an integer $\l(j)$ such that $|\beta_j + \l(j)| \leqslant \frac{1}{2}$.
Next, observe that
$$|1-\cos(2 \pi t)| \geqslant \frac{2}{\pi}|t|, \mbox{ when } \frac{\pi}{2} \leqslant |t| \leqslant \pi,$$
and
$$|\sin(2\pi t)| \geqslant \frac{2}{\pi}|t|,, \mbox{ when } |t| \leqslant \frac{\pi}{2}.$$

Thus, when $\pi / 2 \leqslant |2\pi(\beta_j + \l(j))| \leqslant \pi$ \ we have
$$|1 - e^{2\pi i \beta_j}| \geqslant |1-\cos(2\pi(\beta_j+\ell(j)))| \geqslant 4|(\beta_j+\ell(j))|,$$
and when $|2\pi [ \beta_j + \l(j)]| \leqslant \pi / 2$ \ we have
$$|1 - e^{2\pi i \beta_j}| \geqslant |\sin(2\pi(\beta_j+\ell(j)))| \geqslant 4|(\beta_j+\ell(j))|.$$

\subsection{Change of sign and super-logarithmic growth}

\begin{theorem}\label{prop-3}
Suppose that the sequence $\{\nu_j\}$ has a subsequence $\{\nu_{j_k}\}$ such that
   \begin{equation}\label{onboundhyp-prop}
           \lim_{k \to \infty} \dfrac{|\nu_{j_k}|}{\log (j_k)} = + \infty.
   \end{equation}

\noindent If $b$ changes sign, then $L$ is not (GH).
\end{theorem}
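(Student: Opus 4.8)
The plan is to produce, by concentrating on the $x$-Fourier modes indexed by the subsequence $\{j_k\}$ of \eqref{onboundhyp-prop}, a distribution that is not smooth but is sent by $iL$ into $C^\infty$; by Remark~\ref{L-GH-iff-iL-GH} this shows $L$ is not (GH). Recall from \eqref{Eq1-propA} that $iLu=f$ reads $\partial_t u_j+c_j(t)u_j=f_j$ with $c_j(t)=-\nu_j b(t)+i\mu_j a(t)$, and that $e^{-\int_0^t c_j(\tau)d\tau}=e^{\nu_j B(t)-i\mu_j A(t)}$ solves the associated homogeneous equation, where $A(t)=\int_0^t a$ and $B(t)=\int_0^t b$.

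First I would normalize the signs. Passing to a subsequence of $\{j_k\}$ one may assume $\nu_{j_k}$ has constant sign, and since replacing $(b,q)$ by $(-b,-q)$ leaves $L$ unchanged while preserving the hypothesis that $b$ changes sign, one may assume $\nu_{j_k}\to+\infty$. Since $b$ changes sign, after a translation in $t$ (which does not affect (GH)) one may assume $B$ has a strict local maximum at an interior point $t^*\in(0,2\pi)$; then fix $\rho:=B(t^*)$, open intervals $J'\Subset J\Subset(0,2\pi)$ with $t^*\in J'$, a constant $\varepsilon_0>0$ such that $B\le\rho$ on $\overline J$ and $B\le\rho-\varepsilon_0$ on $\overline J\setminus J'$, and a cut-off $\phi\in C^\infty(\T)$ with $\supp\phi\subset J$, $0\le\phi\le1$ and $\phi\equiv1$ on $J'$.

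The candidate is then $u_j\equiv0$ for $j\notin\{j_k\}$ and, for $j=j_k$,
\[
u_j(t):=\phi(t)\,v_j(t),\qquad v_j(t):=e^{\,\nu_j(B(t)-\rho)-i\mu_j A(t)}.
\]
Since $v_j$ solves the homogeneous equation, one gets $f_j:=\partial_t u_j+c_j(t)u_j=\phi'(t)v_j(t)$ — every term carrying $\nu_j$ or $\mu_j$ cancels and only the cut-off derivative survives. On $\supp\phi$ one has $B\le\rho$ and $\nu_{j_k}>0$, hence $|v_{j_k}|\le1$ there, while $|v_{j_k}(t^*)|=1$; differentiating $v_{j_k}$ repeatedly only produces factors $\nu_{j_k},\mu_{j_k}$, which are $\O(j_k^{1/n})$ by Proposition~\ref{propseque}, so $\sup_t|\partial_t^k u_{j_k}(t)|=\O(j_k^{k/n})$ for every $k$. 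By Proposition~\ref{prop-smooth-2matr}, $\{u_j\}$ thus defines a distribution $u\in\D(\T\times M)$, and $u\notin C^\infty(\T\times M)$ because $\sup_t|u_{j_k}(t)|\ge|u_{j_k}(t^*)|=1$ does not decay. To see that $f:=iLu\in C^\infty(\T\times M)$, note that $\supp\phi'\subset\overline J\setminus J'$, where $B\le\rho-\varepsilon_0$, so $|v_{j_k}(t)|\le e^{-\varepsilon_0\nu_{j_k}}$ there; combined with the previous bounds, $\sup_t|\partial_t^k f_{j_k}(t)|\le C_k\,j_k^{k/n}\,e^{-\varepsilon_0\nu_{j_k}}$, and by \eqref{onboundhyp-prop} the factor $e^{-\varepsilon_0\nu_{j_k}}$ decays faster than any negative power of $j_k$; hence $\sup_t|\partial_t^k f_{j_k}(t)|=\O(j_k^{-N})$ for every $k$ and every $N$, so $f$ is smooth by Proposition~\ref{prop-smooth-2matr}. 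This gives $iLu=f$ smooth with $u$ not smooth, so $iL$, and therefore $L$, is not (GH).

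I expect the main obstacle to be the normalization step: justifying, purely from the hypothesis that $b$ changes sign, that after translating $t$ the primitive $B$ has a strict maximum at an interior point of a fundamental domain together with the quantitative gap $\varepsilon_0$ on $\overline J\setminus J'$ — in particular dealing with a possibly flat or otherwise degenerate maximum. The case $\nu_{j_k}\to-\infty$, if one prefers not to absorb it into the normalization, is entirely symmetric: one replaces $B$ by $-B$ throughout, i.e. concentrates $v_j$ at a minimum of $B$.
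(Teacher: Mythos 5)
Your construction is essentially the paper's: both concentrate on the subsequence $\{j_k\}$, take $u_{j_k}$ to be a cut-off times the homogeneous solution $e^{\nu_{j_k}(B-\rho)-i\mu_{j_k}A}$ of the transport ODE, and use the gap $B\le\rho-\varepsilon_0$ on the support of the cut-off's derivative together with the super-logarithmic growth of $\nu_{j_k}$ to make $Lu$ rapidly decreasing while $|u_{j_k}(t^*)|=1$ keeps $u$ singular. The normalization step you flag as the main obstacle is exactly what the paper isolates as Lemma~\ref{lamma-chang.sig}: choosing the base point $t_0$ where $b<0$ and subtracting the maximum of the primitive yields $B_{t^*}\le 0$ together with a quantitative negative bound $-c^*$ on two subintervals flanking $t^*$ (no strict pointwise maximum is actually needed, so the degenerate/flat-maximum concern you raise is accommodated there); the only cosmetic differences are your single cut-off $\phi$ versus the paper's pair $(g^*,\psi^*)$, and your reduction of $\nu_{j_k}\to-\infty$ to $\nu_{j_k}\to+\infty$ via $(b,q)\mapsto(-b,-q)$ in place of the paper's parallel argument with the primitive based at the minimum $t_*$.
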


Our strategy to this proof is to construct a singular solution to the
equation $Lu=f$. For this, we are going to present a sequence of smooth functions $\{u_j\}$ defined on $\T$, such that
$$
  u = \sum_{j\in\N} u_j \varphi_j \in \D(\T \times M) \setminus C^\infty(\T \times M),
$$
and $f \doteq Lu \in C^\infty(\T \times M)$.

This requires the use of the following lemma:
\begin{lemma}\label{lamma-chang.sig}
Let $b$ be a smooth real $2\pi$-periodic function on $\R$, such that $b \not \equiv 0$ on any interval. Then, the following properties are equivalent:
\begin{enumerate}
\item[i.] $b$ changes sign;

\item[ii.] there exists $t_0\in \R$ and $t^*, t_* \in ]t_0, t_0+2\pi[$ such that
\begin{align*}
 B_{t^*}(t) & \leqslant   0,  \ \forall t \in \   ]t_0,  t_0 + 2\pi], and      \\[2mm]
 B_{t_*}(t) & \geqslant   0,  \ \forall t \in \ ]t_0,t_0 + 2\pi[;
\end{align*}

\item[iii.] there exists $t_0 \in \R$, partitions
\begin{align*}
& t_0 < \alpha^* < \gamma^* < t^* <\delta^* <\beta^* < t_0 +2\pi,  \\[1mm]
& t_0 < \alpha_* < \gamma_* < t_* <\delta_* <\beta_* < t_0 +2\pi,
\end{align*}
and  positive constants $c^*, c_*$ such that the following estimates hold
\begin{align}
& \max_{t\in [\alpha^*, \gamma^*] \bigcup [\delta^*,\beta^*] } B_{t^*} (t)  < -c^*, and  \label{ch-sign-max1b}\\[1mm]
& \min_{t\in [\alpha_*, \gamma_*] \bigcup [\delta_*,\beta_*] } B_{t_*} (t)  >  c_*.   \label{ch-sign-min1b}
\end{align}
\end{enumerate}
\end{lemma}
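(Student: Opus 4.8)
The plan is to prove the three properties equivalent cyclically, $(i)\Rightarrow(ii)\Rightarrow(iii)\Rightarrow(i)$. Throughout, for $c\in\R$ I write $\Phi_c(t)=\int_c^t b(\tau)\,d\tau$, so that $B_c=\Phi_c$ in the notation of the statement; note $\Phi_c$ is smooth, $\Phi_c(c)=0$, $\Phi_c'=b$, and $\Phi_c(t+2\pi)=\Phi_c(t)+2\pi b_0$, where $b_0=(2\pi)^{-1}\int_0^{2\pi}b$.

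\textbf{$(i)\Rightarrow(ii)$.} This is the core of the argument, and the choice of the base point $t_0$ is what requires care. Since $b$ changes sign, I choose $t_0\in\R$ with $b(t_0)>0$ when $b_0\leqslant 0$, and with $b(t_0)<0$ when $b_0>0$; that is, a point at which the sign of $b$ is opposite to that of its mean. I then claim the global maximum and the global minimum of $\Phi_{t_0}$ on the compact interval $[t_0,t_0+2\pi]$ are both attained at interior points $t^*,t_*\in\,]t_0,t_0+2\pi[$. The verification uses only the three facts $\Phi_{t_0}(t_0)=0$, $\Phi_{t_0}(t_0+2\pi)=2\pi b_0$ and $\Phi_{t_0}'(t_0)=\Phi_{t_0}'(t_0+2\pi)=b(t_0)$: if $b(t_0)>0$ (so $b_0\leqslant 0$), then $\Phi_{t_0}(t_0+\varepsilon)>0\geqslant 2\pi b_0$ for small $\varepsilon>0$ shows the maximum exceeds the values at both endpoints, while $\Phi_{t_0}(t_0+2\pi-\varepsilon)<2\pi b_0\leqslant 0$ shows the minimum lies strictly below the values at both endpoints; the case $b(t_0)<0$ is symmetric. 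Taking $t^*$ an interior maximizer and $t_*$ an interior minimizer (with $\Phi_{t_0}(t^*)>0>\Phi_{t_0}(t_*)$, hence $t^*\neq t_*$), one gets $B_{t^*}=\Phi_{t_0}-\Phi_{t_0}(t^*)\leqslant 0$ and $B_{t_*}=\Phi_{t_0}-\Phi_{t_0}(t_*)\geqslant 0$ on all of $[t_0,t_0+2\pi]$, which yields $(ii)$.

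\textbf{$(ii)\Rightarrow(iii)$.} This is a soft localization using that $b$ vanishes on no interval. On $]t_0,t^*[$ the function $B_{t^*}$ is $\leqslant 0$ and $B_{t^*}'=b\not\equiv 0$, so $B_{t^*}$ is not identically zero there; hence it is strictly negative at some point, and by continuity $B_{t^*}\leqslant -c_1<0$ on a closed subinterval $[\alpha^*,\gamma^*]\subset\,]t_0,t^*[$. The same argument on $]t^*,t_0+2\pi[$ produces $[\delta^*,\beta^*]$ and a constant $c_2>0$; with $c^*=\min(c_1,c_2)$ one obtains the partition $t_0<\alpha^*<\gamma^*<t^*<\delta^*<\beta^*<t_0+2\pi$ and \eqref{ch-sign-max1b}. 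Applying the mirror argument to $B_{t_*}\geqslant 0$ on $]t_0,t_*[$ and on $]t_*,t_0+2\pi[$ gives the second partition and \eqref{ch-sign-min1b}, with the same $t_0$.

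\textbf{$(iii)\Rightarrow(i)$.} Contrapositively, suppose $b$ does not change sign. If $b\geqslant 0$ everywhere then $\Phi$, and hence $B_{t^*}(t)=\int_{t^*}^t b$, is nondecreasing, so $B_{t^*}\geqslant 0$ on $[t^*,t_0+2\pi]\supset[\delta^*,\beta^*]$, contradicting \eqref{ch-sign-max1b}; if $b\leqslant 0$ everywhere, the analogous argument gives $B_{t_*}\leqslant 0$ on $[\delta_*,\beta_*]$, contradicting \eqref{ch-sign-min1b}. Hence $(iii)$ forces $b$ to change sign. The step I expect to be the real obstacle is the selection of $t_0$ in $(i)\Rightarrow(ii)$: the naive choices — taking $t_0$ at a sign change of $b$, or taking the base point to be $t^*$ itself — break down when $b_0\neq 0$, because the period-drift $2\pi b_0$ of $\Phi_{t_0}$ can drag an extremum of the primitive onto the boundary of $[t_0,t_0+2\pi]$; anchoring $t_0$ at a point where $b$ opposes its own mean is precisely what keeps both extrema interior.
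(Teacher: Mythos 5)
Your proof is correct and follows essentially the same route as the paper's: the critical device in $(i)\Rightarrow(ii)$ — anchoring $t_0$ at a point where $b$ opposes the sign of its mean $b_0$, so that the periodic drift $2\pi b_0$ pushes both the maximum and the minimum of the primitive $B_{t_0}$ into the interior of $[t_0, t_0+2\pi]$, and then translating via $B_{t^*}=B_{t_0}-B_{t_0}(t^*)$ — is exactly the paper's argument (the paper conditions on the sign of $b_0$ first and then picks $b(t_0)$ of opposite sign, while you condition on $b(t_0)$, which is the same thing). Your $(ii)\Rightarrow(iii)$ and $(iii)\Rightarrow(i)$ steps spell out in more detail what the paper dismisses as following from $B_{t_0}$ being nonconstant and as trivial, respectively, but the content is identical.
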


\begin{proof}
We will only prove that $(i) \Rightarrow (ii) \Rightarrow (iii)$. The proof of $(iii) \Rightarrow (i)$ is trivial.

First, note that any primitive $ B_{\eta}(t) = \int_{\eta}^t b(s) ds $ satisfies
\begin{equation}\label{prim.1}
 B_{\eta}(t) =  B_{\zeta}(t) -  B_{\zeta}(\eta).
\end{equation}

Choose $t_0$ such that
\begin{equation*}
 b(t) <  0 , \ \ \textrm{ for $t $ near $t_0$},
\end{equation*}
and suppose $b_0>0$, which implies $B_{t_0}(t_0+2\pi) >0$.

We have
\begin{equation*} \label{prim.2}
 \frac{d}{dt} B_{t_0}(t) = b(t)  < 0,  \  \textrm{ for $t$ near to $t_0$ and near to $t_0 +2\pi$},
\end{equation*}
then $B_{t_0}$ decreases in a neighborhood of these two points, with $B_{t_0}(t_0) = 0$. Taking this and the periodicity of $b$ into account, we can find $\delta>0$ such that
\begin{equation*}\label{prim.3}
B_{t_0}(t)  \leqslant  B_{t_0}(t_0) =0, \ \textrm { for } \ t \in ]t_0, t_0+\delta[,
\end{equation*}
and
\begin{equation*}\label{prim.4}
B_{t_0}(t)  \geqslant  B_{t_0}(t_0 + 2\pi)>0,   \ \textrm { for } \ t \in ]t_0+ 2\pi - \delta, t_0+ 2\pi[.
\end{equation*}

Thus, there are points $t^*$ and $t_*$ such that
\begin{align*}\label{prim.5}
& B_{t_0}(t^*) \doteq \ \max\big\{B_{t_0}(t); t \in ]t_0, t_0+ 2\pi[ \,\big\}, \\[2mm]
& B_{t_0}(t_*) \doteq \ \min\big\{B_{t_0}(t); t \in ]t_0, t_0+ 2\pi[ \,\big\}.
\end{align*}

Then, by \eqref{prim.1}, we obtain
\begin{align*}
B_{t^*}(t) & =  B_{t_0}(t) -  B_{t_0}(t^*) \leqslant   0,  \ \forall t \in \ ]t_0, t_0+ 2\pi[,       \\[2mm]
B_{t_*}(t) & =  B_{t_0}(t) -  B_{t_0}(t_*) \geqslant   0,  \ \forall t \in \ ]t_0, t_0+ 2\pi[.
\end{align*}

\smallskip

Finally, because  $B_{t_0}$ is not constant on each of the intervals $]t_0, t^*[$, $]t_0, t_*[$, $]t^*, t_0+2\pi[$, $]t_*, t_0+2\pi[$, we obtain the strict inequalities in $(iii)$.

Now, if  $b_0<0$ and we choose $t_0$ such that $b(t)>0$, for $t$ near $t_0$, then the arguments of the proof are going to work, with obvious modifications.

\end{proof}


\subsection*{Proof of theorem \ref{prop-3}}

With the same notation of lemma \ref{lamma-chang.sig}, set the intervals
\begin{equation*}
I^* \doteq [\alpha^*, \gamma^*] \cup [\delta^*,\beta^*] \ \ \textrm{ and } \ \
I_* \doteq [\alpha_*, \gamma_*] \cup [\delta_*,\beta_*],
\end{equation*}
and choose $g^*,g_*, \psi^*,\psi_* \in C^{\infty}(\T)$ such that
\begin{align*}
&  \mbox{supp}(\psi^*) \subset [0,2\pi] \ \mbox{ and } \ \psi^*|_{[\alpha^*, \beta^*]}\equiv 1, \\[2mm]
&  \mbox{supp}(g^*) \subset [\alpha^*, \beta^*]  \ \mbox{ and } \ g^*|_{[\gamma^*,\delta^*]}\equiv 1,
\end{align*}
and
\begin{align*}
&  \mbox{supp}(\psi_*) \subset [0,2\pi] \ \mbox{ and } \ \psi_*|_{[\alpha_*, \beta_*]}\equiv 1, \\[2mm]
&  \mbox{supp}(g_*) \subset [\alpha_*, \beta_*]  \ \mbox{ and } \ g_*|_{[\gamma_*,\delta_*]}\equiv 1.
\end{align*}

Now, admit that $\nu_{j_k} \to +\infty$ and define a sequence $\{u_j\} \subset C^{\infty}(\T)$ by
\begin{equation*}
u_j(t) = \left\{
\begin{array}{l}
g^*(t) e^{\nu_{j_k} B_{t^*}(t)\psi^*(t) - i \mu_{j_k} A_{t^*}(t)\psi^*(t)}, \ \textrm{ if } \ j = j_k  \textrm{ for some } k \in \N; \\[2mm]
0, \ \textrm{ otherwise.}
\end{array} \right.
\end{equation*}

Note that, for any $t \in \mbox{supp}(g^*)$, we have
\begin{equation*}
g^*(t) e^{\nu_{j_k} B_{t^*}(t)\psi^*(t) - i \mu_{j_k} A_{t^*}(t)\psi^*(t)} =
g^*(t) e^{\nu_{j_k} B_{t^*}(t) - i \mu_{j_k} A_{t^*}(t)},
\end{equation*}
then for any $t \in \mbox{supp}(g^*)$ we have $e^{\nu_{j_k} B_{t^*}(t)\psi^*(t)}\leqslant 1$, for $k$ large enough, since we have $B_{t^*}(t) \leqslant 0$ on $I^*$ and $\nu_{j_k} \to +\infty$.

Therefore, for any $\beta \in \N$ and $t \in \mbox{supp}(g^*)$ we obtain
\begin{eqnarray*}
  \left| \partial_t^{\beta} u_{j_k}(t) \right| & \leqslant &
                            \sum_{\alpha \leqslant \beta}\binom{\beta}{\alpha}\left|\partial_t^{\beta-\alpha}\big(g^*(t)\big)\right| \
                      \left| \partial_t^{\alpha}\left(e^{\nu_{j_k} B_{t^*} (t) - i \mu_{j_k} A_{t^*}(t)}\right) \right| \\[2mm]
      & \leqslant & C_{a,b,g,\beta}\big(|\mu_{j_k}| + |\nu_{j_k}|\big)^\beta e^{\nu_{j_k} B_{t^*} (t)} \\[3mm]
      & \leqslant & C j^{\beta/n}_k, \mbox{ as }  k \to \infty.
\end{eqnarray*}

Since $|u_{j_k}(t^*)| = 1$, for any $k$, we have
\begin{equation}\label{u-singular-superlog}
  u \doteq \sum_{j \in \N} u_j(t)\varphi_j(x) \in \D(\T \times M) \setminus C^{\infty}(\T \times M).
\end{equation}

Next, we show that $f \doteq Lu \in C^{\infty}(\T \times M).$ Here $f(t,x) = \sum_{j} f_j(t)\varphi_j(x)$, where
\begin{equation*}
f_j(t) = \left\{
\begin{array}{l}
-i {g^{*}}'(t) e^{\nu_{j_k} B_{t^*}(t)\psi^*(t) - i \mu_{j_k} A_{t^*}(t)\psi^*(t)}, \ \textrm{ if } \ j = j_k, \mbox{ for some } k \in \N; \\[2mm]
0, \ \textrm{ otherwise}.
\end{array} \right.
\end{equation*}

Note that $ \supp(f_{j_k}) \subset I^*$, for any $k \in \N$ and
\begin{equation}\label{partial-beta-fjk}
|\partial^{\beta}_t f_{j_k}(t)| \leqslant C \ j_{k}^{\beta / n} \ e^{\nu_{j_k}B_{t^*}(t)}, \ \ k \to \infty.
\end{equation}

We observe that, at this point, we can not eliminate the exponential above, using the expression $e^{\nu_{j_k} B_{t^*}(t)\psi^*(t)}\leqslant 1$, because this would only ensure that $\{f_j\}$ is slow-growing and therefore is a periodic distribution. In order to obtain the rapid decreasing of coefficients $f_j$, we need to analyze the consequences of super-logarithmic growth, as in condition \eqref{2-prop-smooth-2}.

\smallskip

By the estimate \eqref{ch-sign-max1b} we obtain
\begin{equation*}
\nu_{j_k} B(t) \psi^*(t) \leqslant - \nu_{j_k} c^*, \ \forall t \in \supp(f_{j_k}).
\end{equation*}

Since \eqref{onboundhyp-prop} is equivalent to
\begin{equation}\label{onboundhyp_equiv-1}
(\forall \eta>0) (\exists \ k_0 \in \N)(\forall k \geqslant k_0) \ \log (j_k^{\eta}) < \nu_{j_k},
\end{equation}
it follows from \eqref{partial-beta-fjk} that
\begin{align*}
|\partial_t^{\beta} f_{j_k}(t)| & \leqslant C {j_k}^{\beta/n} e^{\nu_{j_k} B_{t^*}(t)} \\[2mm]
                                & \leqslant C {j_k}^{\beta/n} e^{ - \nu_{j_k} c^* }  \\[2mm]
                                & \leqslant C {j_k}^{\beta/n} e^{- c^* \log ({j_k}^{\eta})} \\[2mm]
                                & \leqslant C {j_k}^{-\eta c^* + \beta/n}, \mbox {if } k>k_0,
\end{align*}
for any $t \in \supp(f_{j_k})$.

Since $\eta$ can be chosen arbitrarily large, by taking a large enough $k$, thus $\{f_j\}$ satisfies \eqref{2-prop-smooth-2} and $f \in C^{\infty}(\T \times M)$ and therefore $L$ is not (GH).

This concludes the proof in the case where $\nu_{j_k} \to +\infty$. Now, observing the definition of $u$ above, it is not difficult to see that it is possible to substitute the condition $\nu_{j_k} \to +\infty$ by the weaker assertion that $\{\nu_{j_k}\}$ has a subsequence that diverges to $+\infty$.

On other hand, if $\nu_{j_k} \to -\infty$, we use the primitive $B_{t_*}$ and set

\begin{equation*}
u_j(t) = \left\{
\begin{array}{l}
g_*(t) e^{\nu_{j_k} B_{t_*}(t)\psi_*(t) - i \mu_{j_k} A_{t_*}(t)\psi_*(t)}, \ \textrm{ if } \ j = j_k  \textrm{ for some } k \in \N; \\[2mm]
0, \ \textrm{ otherwise.}
\end{array} \right.
\end{equation*}

In this case we obtain, by  estimate \eqref{ch-sign-min1b},
\begin{equation*}
e^{\nu_{j_k} B_{t_*}(t)}\psi_*(t) \leqslant e^{\nu_{j_k} c_*} \leqslant j_k^{-\eta c_*},
\end{equation*}
for any $t \in \mbox{supp}(g_*)$.

\section{The hypothesis of the unboundedness of $\{\nu_j\}$ \label{remark-mu}}

The purpose of this section is to show how to replace the hypothesis
\begin{equation}\label{A2-nu-j-1}
\lim_{j \to \infty} |\nu_j|  =  \infty
\end{equation}
by a weaker condition.

First, we emphasize that condition \eqref{A2-nu-j-1} was only used three times on the proof of our results, namely:
\begin{enumerate}

        \item [(a)] in the proof of theorem \ref{norm.form.theo} to obtain the inequalities
\begin{align*}
\rho \nu_j \leqslant (B(t)-b_0t) \nu_j \leqslant \delta \nu_j \ \ \textrm{ and } \ \
\delta \nu_j \leqslant (B(t)-b_0t) \nu_j \leqslant \rho \nu_j;
\end{align*}

        \item [(b)] in the proof of theorem \ref{prop-1} to obtain the estimates
\begin{enumerate}
  \item[{\it i.}] $\displaystyle \int_{0}^{2\pi} \ e^{\nu_j\int_{t-s}^{t}b(\tau)d \tau} ds \leqslant 2\pi,$ \ when $\nu_j \to + \infty$;
  \item[{\it ii.}] $\displaystyle \int_{0}^{2\pi} e^{-\nu_j\int_{t}^{t+s}b(\tau)d \tau} ds \leqslant 2\pi,$ when $\nu_j \to - \infty$; and  \vspace{2mm}
  \item[{\it iii.}] $b_0\nu_j = 0$ only for a finite number of indexes.
\end{enumerate}
  \item [(c)] in the proof of theorem \ref{prop-1} to guarantee that the sequences $\{\Theta_j\}$  and $\{\Theta_j \, e^{\nu_j 2\pi b_0} \}$ are bounded,  see \eqref{estimate1-Dkuj} and \eqref{estimate2-Dkuj} page \pageref{estimate1-Dkuj}.
\end{enumerate}

Keeping these points and their proofs in mind, our goal is to weaken condition \eqref{A2-nu-j-1} in order to preserve these estimates.  A first attempt in this direction is the following:
\begin{equation}\label{cond-bc}
\textit{suppose there is $C>0$ and  $j_0 \in \N$, such that $|\nu_j| \geqslant C, \ \forall j \geqslant j_0$.}
\end{equation}

If \eqref{cond-bc} holds, we can easily recapture the inequalities highlighted in items (a) and (b) above. However, to recover the inequalities in item (c), we have to analyze these expressions more carefully. Observe that the main point of (c) is to ensure that the sequences $\{\Theta_j\}$ and $\{\Theta_j e^{\nu_j 2\pi b_0}\}$ have a controlled growth when $j \rightarrow +\infty$, that is, the sequence
\begin{equation}\label{om-1}
\omega_j =  e^{ \nu_j 2 \pi b_0} \left( e^{ \nu_j 2 \pi b_0} -2 \cos(2\pi a_0\mu_j) \right)+ 1
\end{equation}
does not converge rapidly to zero.

Let us investigate what happens when this sequence convergences to zero. For this, admit that there is a subsequence $\omega_{j_k} \to 0$, for
$k \to \infty$. By formula \eqref{om-1} we have
\begin{equation*}
e^{ \nu_{j_k} 2 \pi b_0} < 2 \cos(2\pi a_0\mu_{j_k}) \leqslant 2, \ k \to \infty,
\end{equation*}
and then
\begin{equation*}
\nu_{j_k} \pi b_0  < \log (2), \ j \to \infty.
\end{equation*}

Thus, we can set $\kappa = \limsup_{k \in \N} \nu_{j_k}$ and $\{\nu_{j_{\ell}}\}_{\ell}$, such that
\begin{equation*}
\lim_{\ell \to \infty} \nu_{j_{\ell}} = \kappa  \ \ \textrm{ and } \ \ \lim_{\ell \to \infty} e^{ \nu_{j_{\ell}} 2 \pi b_0} = \alpha < 2.
\end{equation*}

Then, we obtain
\begin{align*}
     0 =& \lim_{\ell \to \infty} \omega_{j_{\ell}} \\[3mm]
       =&  \lim_{\ell \to \infty} e^{ \nu_{j_{\ell}} 2 \pi b_0} \left( e^{ \nu_{j_{\ell}} 2 \pi b_0} -2 \cos(2\pi a_0\mu_{j_{\ell}}) \right) +1 \\[2mm]
       =& \alpha  \left( \alpha -2 \lim_{\ell \to \infty} \cos(2\pi a_0\mu_{j_{\ell}}) \right) +1,
\end{align*}
and
\begin{equation}\label{mu-1}
\lim_{\ell \to \infty} \cos(2\pi a_0\mu_{j_{\ell}}) =  \dfrac{1 + \alpha^2}{2 \alpha}.
\end{equation}

But, from \eqref{mu-1} we have $1 + \alpha^2 \leqslant 2 \alpha$, implying $\alpha =1$ and then $\kappa = 0$.

Further, a necessary condition to $\{\omega_j\}$ approach to zero is that $\{\nu_j\}$ has a subsequence $\{\nu_{j_{\ell}}\}_{\ell}$ converging to zero. Moreover, $a_0 \mu_{j_{\ell}} \to z \in \Z$, when $\ell \to \infty$.

From this discussion, we have:
\begin{proposition}\label{hy-nu-fraca}
The hypothesis \eqref{A2-nu-j-1} can be replaced by the condition
\begin{equation}\label{hy-nu-fraca-1}
\textit{zero is not an accumulation point of the sequence $\{\nu_j\}$.}
\end{equation}
\end{proposition}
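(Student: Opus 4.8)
The plan is to check that the weaker condition \eqref{hy-nu-fraca-1} still permits reproducing each of the three estimates (a), (b), (c) in which \eqref{A2-nu-j-1} was used. First I would record that \eqref{hy-nu-fraca-1} is equivalent to \eqref{cond-bc}: saying that $0$ is not an accumulation point of $\{\nu_j\}$ means exactly that $\{j:|\nu_j|<\varepsilon\}$ is finite for some $\varepsilon>0$, i.e.\ that $|\nu_j|\geqslant\varepsilon$ for all $j$ past some $j_0$; conversely \eqref{cond-bc} forces all but finitely many $\nu_j$ to lie outside $(-C,C)$. So it suffices to run the three verifications under \eqref{cond-bc}.

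Items (a) and (b) are immediate: none of the inequalities there used the divergence $|\nu_j|\to\infty$, only the sign of $\nu_j$ and the fact that $\nu_j$ vanishes for at most finitely many $j$. I would split the range $j\geqslant j_0$ into $\{\nu_j>0\}$ and $\{\nu_j<0\}$, drop the finitely many indices with $\nu_j=0$, and note that on each piece the monotonicity inequalities $\delta\nu_j\leqslant(B(t)-b_0t)\nu_j\leqslant\rho\nu_j$ (and its mirror) in the proof of Theorem \ref{norm.form.theo}, combined with $|\nu_j|\leqslant\log(j^{\kappa+\varepsilon})$ coming from \eqref{boundhyp-2}, together with the integral bounds $\int_0^{2\pi}e^{\nu_j\int_{t-s}^{t}b(\tau)\,d\tau}\,ds\leqslant2\pi$ and its $\nu_j<0$ analogue in the proof of Theorem \ref{prop-1}, all hold verbatim, since those computations used only $\nu_j>0$, resp.\ $\nu_j<0$. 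For (b)(iii), $b_0\neq0$ together with $\nu_j\neq0$ for $j\geqslant j_0$ gives $b_0\nu_j=0$ for at most finitely many $j$.

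Item (c) carries the weight, and the crucial computation has in fact already been carried out in the discussion preceding the statement: if some subsequence of $\{\omega_j\}$ (with $\omega_j$ as in \eqref{om-1}) tended to $0$, then $\{\nu_j\}$ would have to have a subsequence converging to $0$, which is excluded by \eqref{hy-nu-fraca-1}; the same argument, run with $-\nu_j$ in place of $\nu_j$, controls $\{\Theta_j e^{2\pi\nu_j b_0}\}$. What remains is to upgrade this from the qualitative statement to the uniform bound on $\{\Theta_j\}$ (resp.\ $\{\Theta_j e^{2\pi\nu_j b_0}\}$) used in \eqref{estimate1-Dkuj} and \eqref{estimate2-Dkuj}. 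Here I would exploit the quantitative separation $|\nu_j|\geqslant\varepsilon$ directly: in the normalization $b_0<0$ of Theorem \ref{prop-1}, for $\nu_j\geqslant\varepsilon$ one has $x_j=e^{2\pi\nu_j b_0}\leqslant e^{2\pi\varepsilon b_0}<1$, so $\omega_j=x_j^{2}-2x_j\cos(2\pi a_0\mu_j)+1\geqslant(1-x_j)^{2}\geqslant(1-e^{2\pi\varepsilon b_0})^{2}>0$, and symmetrically for $\nu_j\leqslant-\varepsilon$ with $\Theta_j e^{2\pi\nu_j b_0}$.

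The main obstacle is precisely this last upgrade in (c): the preceding discussion gives only ``no subsequence of $\{\omega_j\}$ converges to $0$'', whereas Theorem \ref{prop-1} needs a genuine positive lower bound on $\omega_j$. Bridging this requires combining the algebraic identity for $\omega_j$ with the separation $|\nu_j|\geqslant\varepsilon$, handling separately the regime where $|\nu_j|$ is large --- where $e^{2\pi\nu_j b_0}$ is bounded away from $1$ and $\omega_j\to1$ or $+\infty$ --- and the regime where $|\nu_j|$ remains in a compact set bounded away from $0$.
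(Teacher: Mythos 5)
Your proposal is correct and follows the paper's framework (the same three items (a), (b), (c) to re-verify), but for the crucial item (c) you use a genuinely different and more elementary argument. The paper proceeds by contradiction: it assumes a subsequence $\omega_{j_k}\to 0$, passes to a further subsequence along which $e^{\nu_{j_\ell}2\pi b_0}\to\alpha$ and $\cos(2\pi a_0\mu_{j_\ell})$ converges, derives $\cos(2\pi a_0\mu_{j_\ell})\to(1+\alpha^2)/(2\alpha)$ and hence $\alpha=1$, and concludes that $\{\nu_j\}$ must accumulate at zero; the desired bound then comes from contrapositive plus $\liminf\omega_j>0$. Your route instead notes the pointwise algebraic inequality $\omega_j = x_j^2 - 2x_j\cos(2\pi a_0\mu_j)+1 \geq (1-x_j)^2$ with $x_j=e^{2\pi\nu_j b_0}$, and uses the quantitative separation $|\nu_j|\geqslant\varepsilon$ together with $b_0\neq 0$ to keep $x_j$ bounded away from $1$, hence $\omega_j\geq(1-e^{2\pi\varepsilon b_0})^2>0$ directly (with the mirror estimate on $\Theta_j e^{2\pi\nu_j b_0}$ for $\nu_j\leqslant-\varepsilon$). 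This is shorter, avoids any subsequence extraction or limit manipulation, and produces an explicit lower bound rather than a mere non-vanishing statement; the paper's argument, while less direct, has the virtue of identifying precisely which mechanism ($\nu_{j_\ell}\to 0$ together with $a_0\mu_{j_\ell}\to\Z$) is responsible for degeneracy, which it then reuses in the analysis following Proposition~\ref{hy-nu-fraca}. Your closing paragraph flags a ``gap'' between the paper's qualitative conclusion and the uniform bound needed, but this gap is negligible: ``no subsequence of $\{\omega_j\}$ tends to $0$'' is exactly $\liminf_j\omega_j>0$, which (since $\omega_j>0$ for all large $j$, the resonant indices being finite) already gives the uniform positive lower bound; your direct $(1-x_j)^2$ computation is nevertheless a tidier way to obtain it.
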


\smallskip

\begin{example}
Let $\tau \in \N$,  $c \in \Z_+$ and consider the sequence
\begin{equation*}
\mu_j = \dfrac{(c + j)^{\tau}}{j^{\tau}}, \ j \in \N,
\end{equation*}
for which there are $j_0 \in \N$ and $C'>0$ such that
\begin{equation}\label{mu-limitada}
0 < C' \leqslant \mu_j , \ \ \forall j\geqslant j_0.
\end{equation}

Consider the operator
\begin{equation*}
q(x,D_x) \cdot u = \sum_{j \in \N} u_j \mu_j \varphi_j(x),
\end{equation*}
an irrational number $\alpha \in \R$ and
\begin{equation*}
{\cal{P}} = D_t + \alpha q(x,D_x), \ \ (t,x) \in \T^2 = \T_t \times \T_x.
\end{equation*}

If $\alpha$ is a non-Liouville number, the operator ${\cal{P}}$ is (GH). Indeed, there exist $\delta>0$ such that
\begin{equation}\label{alpha-n-L}
\left | \alpha + \dfrac{p_j}{q_j} \right | \geqslant \dfrac{1}{|q_j|^{\delta}},
\end{equation}
then for each $\ell \in \Z$ we obtain
\begin{align*}
|\alpha \mu_j + \ell | & = \mu_j \Big | \alpha + \dfrac{\ell j^{\tau} }{(c + j)^{\tau}} \Big | \\[2mm]
                       &  \geqslant  \dfrac{C'}{(c + j)^{\tau \delta}}  \\[3mm]
                       & \geqslant {C}{j^{\, -\tau \delta}},
\end{align*}
which implies $\alpha$ non-Liouville with respect to $\{\mu_j\}$. Now, $\Gamma_{\alpha} = \emptyset$, since $\alpha \in \R \setminus \Q$, thus ${\cal{P}}$ is (GH) by theorem \ref{main-thm-dim-1}, item \textit{iii}, and proposition \ref{hy-nu-fraca}.
\end{example}

\smallskip

A natural question is: if the operator
\begin{equation*}
L = D_t + a(t)p(x,D_x) + i b(t)q(x,D_x)
\end{equation*}
does not satisfy the condition \eqref{hy-nu-fraca-1}, what are the  consequences in the study of global hypoellipticity?

Evidently, there are no novelties (in theorem \ref{main-thm-dim-1}) if $b\equiv 0$, or if $b$ changes sign and $\{\nu_j\}$ has
super-logarithmic growth, since in the first case, $\{\Theta_j\}$
depends only on $\{a_0\mu_j\}$, and in the second case, we can construct a singular solution, as shown in theorem \ref{prop-3}.

Thus, let us investigate the case $b \not \equiv 0$.

\begin{theorem}\label{theorem-nu-zero-1}
Admit that: the sequence $\{\nu_{j}\}$ has at most logarithmic growth; $\{ j \in \N; \nu_{j} = 0\}$ is finite; zero is an accumulation point of $\{\nu_{j}\}$. Then, the following statements are true:

\begin{enumerate}
        \item [i.] when $\displaystyle \liminf\omega_j \neq 0$ we have
            \begin{enumerate}
                     \item $b_0 \neq 0$ implies that $L$ is (GH);\vspace{2mm}
                         \item $b_0 = 0$ implies that $L$ is (GH) if, and only if, $\Gamma_{a_0}$ is finite and $a_0$ is non-Liouville with respect to the sequence $\{\mu_j\}$;
            \end{enumerate}

        \item[ii.] when $\displaystyle \liminf\omega_j = 0$, the operator $L$ is (GH) if, and only if,  $\Gamma_{a_0}$ is finite and $a_0$ is non-Liouville with
               respect to the sequence $\{\mu_j\}$.
\end{enumerate}
\end{theorem}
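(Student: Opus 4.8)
The plan is to reduce $L$ to the constant‑coefficient operator $L_{a_0,b_0}$ and then to characterize the (GH) of $L_{a_0,b_0}$ sharply through the single sequence $\{\omega_j\}$ of \eqref{om-1}. First I would check that the present hypotheses — $\{\nu_j\}$ of at most logarithmic growth and $\{j:\nu_j=0\}$ finite — already make the reduction of Subsection \ref{reduct-section} work: in the proof of Theorem \ref{norm.form.theo} the assumption $|\nu_j|\to\infty$ is used only to split into the cases $\nu_j\to+\infty$ and $\nu_j\to-\infty$, whereas all that is really needed is that $e^{(B(t)-b_0t)\nu_j}$ grow at most polynomially in $j$, which follows from \eqref{boundhyp-2} since this factor is bounded by $e^{(\max_{t}|B(t)-b_0t|)\,|\nu_j|}\leqslant Cj^{C'}$; the map $\Psi_a$ needs no growth hypothesis at all. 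Hence $\Psi_{a,b}=\Psi_a\circ\Psi_b$ is still an automorphism of $\D(\T\times M)$ and of $C^\infty(\T\times M)$ with $\Psi_{a,b}^{-1}\circ L\circ\Psi_{a,b}=L_{a_0,b_0}$, so $L$ is (GH) if and only if $L_{a_0,b_0}$ is, and the dichotomy $\liminf\omega_j=0$ versus $\liminf\omega_j\neq0$ is unchanged since $\omega_j$ depends only on $a_0,b_0,\mu_j,\nu_j$.

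Next I would pass to the $x$‑Fourier modes of $L_{a_0,b_0}$. For $j$ with $c_j^0\notin i\Z$ the $j$‑th mode is given by \eqref{sol1-Eq2-propA}--\eqref{sol1-Eq3-propA} and obeys \eqref{estim1-derivatives}--\eqref{estim2-derivatives}, now with $a\equiv a_0$, $b\equiv b_0$, so the inner integrals are $\leqslant2\pi$ and all the growth sits in $\Theta_j=|1-e^{-2\pi c_j^0}|^{-1}$ and $\Theta_je^{2\pi b_0\nu_j}=|1-e^{2\pi c_j^0}|^{-1}$. Since $\omega_j=|1-e^{-2\pi c_j^0}|^2=e^{4\pi b_0\nu_j}|1-e^{2\pi c_j^0}|^2$ and $e^{\pm4\pi b_0\nu_j}\leqslant Cj^{C'}$ (again \eqref{boundhyp-2}), both $\{\Theta_j\}$ and $\{\Theta_je^{2\pi b_0\nu_j}\}$ are $\O(j^N)$ for some $N$ if and only if $\omega_j\geqslant cj^{-N}$ for all large $j$. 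Combining this with Proposition \ref{prop-smooth-1}, the theorem reduces to: $L_{a_0,b_0}$ is (GH) iff the resonance set $\{j:c_j^0\in i\Z\}=\{j:b_0\nu_j=0,\ a_0\mu_j\in\Z\}$ is finite and $\omega_j\geqslant cj^{-N}$ for some $N$ and all large $j$; moreover that resonance set lies inside the finite set $\{j:\nu_j=0\}$ when $b_0\neq0$, and equals $\Gamma_{a_0}$ when $b_0=0$.

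The cases are now short, save one. If $b_0=0$ then $L_{a_0,b_0}=D_t+a_0p(x,D_x)$ and $\omega_j=4\sin^2(\pi a_0\mu_j)$, so ``resonance set finite and $\omega_j\geqslant cj^{-N}$'' holds precisely when $\Gamma_{a_0}$ is finite and $a_0$ is non‑Liouville with respect to $\{\mu_j\}$ — this is Theorem \ref{prop-2}, which settles case (i)(b) and the $b_0=0$ part of (ii). If $b_0\neq0$ and $\liminf\omega_j\neq0$, the resonance set is finite and $\omega_j\geqslant c>0$ for large $j$, so \eqref{estim1-derivatives}--\eqref{estim2-derivatives} yield $|\partial_t^ku_j(t)|\leqslant Cj^{-\eta+N}$ for every $\eta>0$ and hence $u\in C^\infty(\T\times M)$: this is (i)(a). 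There remains $b_0\neq0$, $\liminf\omega_j=0$, where the resonance set is still finite, so (GH) is equivalent to $\omega_j\geqslant cj^{-N}$. For the ``$\Leftarrow$'' direction note that if $\Gamma_{a_0}$ is finite and $a_0$ is non‑Liouville with exponent $\delta$ then, for large $j\notin\Gamma_{a_0}$,
\[
\omega_j\ \geqslant\ e^{2\pi b_0\nu_j}\,|1-e^{2\pi i a_0\mu_j}|^2\ \geqslant\ 16\,e^{2\pi b_0\nu_j}\Big(\inf_{\ell\in\Z}|a_0\mu_j+\ell|\Big)^2\ \geqslant\ c\,j^{-C'-2\delta},
\]
by Proposition \ref{prop-liou}, the logarithmic lower bound $e^{2\pi b_0\nu_j}\geqslant c_1j^{-C_1}$ from \eqref{boundhyp-2}, and non‑Liouvilleness; hence $L$ is (GH). The direction ``$\Rightarrow$'' I would prove by contraposition.

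The main obstacle is exactly this ``$\Rightarrow$'' of case (ii): one must upgrade the soft information ``$\liminf\omega_j=0$ together with ($\Gamma_{a_0}$ infinite or $a_0$ Liouville)'' to a single subsequence $\{j_k\}$ along which $\omega_{j_k}$ — equivalently $\big(\inf_{\ell\in\Z}|a_0\mu_{j_k}+\ell|\big)^2+b_0^2\nu_{j_k}^2$ — decays faster than every power of $j_k$. By the discussion preceding Proposition \ref{hy-nu-fraca}, $\omega_j\to0$ along a subsequence already forces $\nu_j\to0$ and $\inf_\ell|a_0\mu_j+\ell|\to0$ there, so the delicate point is to refine the extraction so that \emph{both} rates become superpolynomial simultaneously, i.e.\ to control the joint behaviour of $\{\nu_j\}$ and $\{\inf_\ell|a_0\mu_j+\ell|\}$ along subsequences. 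Once such $\{j_k\}$ is at hand, the concentrated data $u_{j_k}(t)=e^{-i\tau_kt}$ and $f_{j_k}(t)=(a_0\mu_{j_k}-\tau_k)e^{-i\tau_kt}$ (with $\tau_k\in\Z$ nearest to $a_0\mu_{j_k}$, and $\tau_k=a_0\mu_{j_k}$ when $j_k\in\Gamma_{a_0}$), extended by zero, produce via Propositions \ref{prop-smooth-1}--\ref{prop-smooth-2matr} and Proposition \ref{propseque} a $u\in\D(\T\times M)\setminus C^\infty(\T\times M)$ with $L_{a_0,b_0}u\in C^\infty(\T\times M)$, exactly as in the necessity argument inside the proof of Theorem \ref{prop-2}; the remaining verifications are routine bookkeeping.
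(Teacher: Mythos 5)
Your overall plan — reduce to $L_{a_0,b_0}$ via $\Psi_{a,b}$, reformulate (GH) of the model operator as a polynomial lower bound on $\omega_j$ (equivalently polynomial growth of $\Theta_j$ and $\Theta_j e^{2\pi b_0\nu_j}$), then run the three subcases — matches the paper's proof. Your sufficiency arguments are correct; the paper's version of case (ii)~``$\Leftarrow$'' uses the cubic bound $|1-\cos y|\geqslant|y-2\pi\ell|^3$ in place of the linear bound from Proposition~\ref{prop-liou}, but the two are interchangeable for producing a polynomial lower bound on $\omega_{j_k}$, and your inequality $\omega_j\geqslant e^{2\pi b_0\nu_j}\,|1-e^{2\pi i a_0\mu_j}|^2$ is a clean way to see it.

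The genuine gap is precisely the step you flagged in the last paragraph — the ``$\Rightarrow$'' of case (ii) — but it is more serious than ``delicate bookkeeping to refine the extraction.'' You need a single subsequence $\{j_k\}$ along which both $|\nu_{j_k}|$ and $\inf_\ell|a_0\mu_{j_k}+\ell|$ decay superpolynomially, and the hypotheses do not supply one. The Liouville hypothesis gives a subsequence where $\inf_\ell|a_0\mu_j+\ell|$ is superpolynomially small, while $\liminf\omega_j=0$ gives a (possibly different) subsequence where $\nu_j\to0$ and $\inf_\ell|a_0\mu_j+\ell|\to0$ with no rate information; nothing forces these to overlap. Concretely, one can take $\nu_j=1/j$ on even indices (so $\omega_j\geqslant (1-e^{2\pi b_0\nu_j})^2\geqslant c\,j^{-2}$ there, a harmless polynomial lower bound) and load the superpolynomial Liouville approximation of $a_0\mu_j$ onto odd indices where $\nu_j=1$ (so $\omega_j\geqslant(1-e^{2\pi b_0})^2>0$ there). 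All the hypotheses of the theorem hold, $a_0$ is Liouville with respect to $\{\mu_j\}$, yet $\Theta_j$ is polynomially bounded and $L_{a_0,b_0}$ is (GH) — contradicting the ``only if'' of item (ii). So the necessity cannot be proved from these hypotheses; the singular-solution construction you (and the paper, via the terse pointer to Theorem~\ref{prop-2}) appeal to breaks precisely because for $b_0\neq 0$ the datum $f_{j_k}$ picks up the extra term $ib_0\nu_{j_k}e^{-i\tau_k t}$, and there is no reason for $\nu_{j_k}$ to be small along the Liouville subsequence. An extra assumption tying the decay rate of $\inf_\ell|a_0\mu_j+\ell|$ to that of $\nu_j$ along the $\omega_j\to0$ subsequence seems to be required.
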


\smallskip

\begin{remark}
We emphasize that when $\displaystyle \liminf\omega_j = 0$, the global hypoellipticity depends only of the real part of $L$. Moreover, $b_0 \neq 0$ is not a sufficient condition for global hypoelipticity, even in the case of constant coefficients.
\end{remark}

\begin{proof}
Since $\{\nu_{j}\}$ has a logarithmic growth we can apply  the reduction to normal form; thus $L$ is (GH) if, and only if, the operator $L_{a_0,b_0}$ is (GH).

When $\displaystyle \liminf\omega_j \neq 0$, the sequence $\{\Theta_j\}$ is bounded, thus item \textit{i.} follows from theorem \ref{main-thm-dim-1}.

Now, admit that $\displaystyle \liminf\omega_j = 0$ and let $u \in \D(\T \times M)$ be a solution of $(iL_{a_0,b_0})u = f \in C^{\infty}(\T \times M)$. Then we obtain the sequence of differential equations
\begin{equation}\label{sol-theorem-nu-zero}
\partial_t u_j(t) + (-b_0\nu_j + i a_0\mu_j) u_j(t) = f_j(t), \ t \in \T, \ j\in \N.
\end{equation}

If $b_0 = 0$ this result is a consequence of theorem
\ref{prop-2}. On the other hand, if  $b_0\neq 0$, we can assume $b_0<0$, then the unique solutions of \eqref{sol-theorem-nu-zero} are given by \eqref{sol1-Eq2-propA} or \eqref{sol1-Eq3-propA}, since $\nu_{j} = 0$ at most for a finite number of indexes $j$.

From inequalities \eqref{estim1-derivatives} and \eqref{estim2-derivatives} follows that
\begin{align}
|\partial_t^m u_j(t)| & \leqslant C j^{-\eta + m/n} \Theta_j, \ \textrm{ for } \ \nu_j>0 \ \textrm{ and } \ \label{1-a}\\[2mm]
|\partial_t^m u_j(t)| & \leqslant C j^{-\eta + m/n} \Theta_j \, e^{2 \pi b_0 \nu_j}, \ \textrm{ for } \ \nu_j<0. \label{1-b}
\end{align}

Since $\displaystyle \liminf\omega_j = 0$, we obtain a subsequence $\{\omega_{j_k}\}$ converging to zero and, by the discussion that followed after equation \eqref{mu-1}, we have
$a_0 \mu_{j_k} \to \gamma \in \Z$ and
\begin{align*}
 0 = \lim_{k \to \infty} \omega_{j_{k}}  = & 2 \lim_{k \to \infty} \left( 1 -  \cos(2\pi a_0\mu_{j_{k}}) \right).
\end{align*}

When $j \neq j_k$ we can control the growth of $\{\Theta_j\}$ and $\{\Theta_j \, e^{2 \pi b_0 \nu_j}\}$ by using the same ideas shown in theorem \ref{prop-1}.

For $j=j_k$, we have $e^{2 \pi b_0 \nu_{j_k}} \to 1$, when $k \to \infty$, and it is sufficient to study the behaviour of $\{\Theta_j\}$.

Now, consider the inequality
\begin{equation}\label{ine-cos}
|1- \cos(y)| \geqslant  |y - 2\pi \ell|^3, \  \textrm{ if } \ |y - 2\pi \ell| \leqslant 1/2, \ \forall \ell \in \Z.
\end{equation}

Thus, if $a_0$ is non-Liouville with respect to the sequence $\{\mu_j\}_{j \in \N}$, we obtain by \eqref{ine-cos} that
\begin{align*}
\lim_{k \to \infty} \omega_{j_{k}}  \geqslant \, & 2 \lim_{k \to \infty} |2\pi a_0\mu_{j_{k}} - 2\pi \gamma |^3 \\[2mm]
                                    \geqslant \, & 16\pi^3 \lim_{k \to \infty} \left ( \inf_{\ell \in \Z}|a_0\mu_{j_{k}} + \ell| \right)^3\\[2mm]
                                    \geqslant \, & 16\pi^3 C' j_k^{\, 3 \delta}, \ k \to \infty.
\end{align*}

From this, there exists $k_0 \in \N$ such that $\Theta_{j_k} \leqslant \, C j_k^{-3 \delta/2}$, $\forall k\geqslant k_0$, which implies $L_{a_0,b_0}$ is (GH) and that the conditions, $\,\Gamma_{a_0}$ is finite and $a_0$ is non-Liouville with respect to the sequence $\{\mu_j\}_{j \in \N}$, are sufficient conditions for the (GH).

The necessity uses the same ideas shown in theorem \ref{prop-2}, where we have constructed singular solutions.

\end{proof}

\smallskip

\begin{example} \label{pro1-ex1}
Let $\tau \in \N$,  $c \in \Z_+$, and consider the sequence
\begin{equation*}
\mu_j = \dfrac{(c + j)^{\tau}}{j^{\tau + 1}}, \ j \in \N.
\end{equation*}

For each $u = \sum_{j \in \N} u_j  \varphi_j(x) \in \D(\T_x)$ set
\begin{equation*}
q(x,D_x) \cdot u = \sum_{j \in \N} u_j \mu_j \varphi_j(x).
\end{equation*}

Let $\alpha$ and $\beta$ be real numbers, with $\beta \neq 0$, and consider the operator
\begin{equation}\label{ex-op}
{\cal{P}} = D_t + ( \alpha  + i \beta) q(x,D_x), \ (t,x) \in \T^2 = \T_t\times \T_x.
\end{equation}

Thus, if $\alpha$ is an irrational non-Liouville number then the operator ${\cal{P}}$ is (GH).

Indeed, by \eqref{alpha-n-L} we obtain
\begin{equation*}
|\alpha \mu_j + \ell | \geqslant C j^{\tau(\delta -1) - \delta},
\end{equation*}
for each $\ell \in \Z$.
\end{example}

\smallskip

\begin{remark}
If $\alpha$ is an irrational Liouville number, the example above exhibits a class of non-(GH) operators on $\T^2$ of the type
\begin{equation*}
L = D_t + ( \alpha  + i \beta) q(x,D_x), \mbox{ \ with \ }  b\neq 0,
\end{equation*}
which is a phenomenon that does not occur in the differential case, see the works of S. Greenfield and N. Wallach \cite{GW1} and J. Hounie \cite{HOU79}.

Moreover, we point out that the sequence $\{\mu_j\}$ has growth at most logarithmic; thus by reduction to normal form, the conclusion above holds even for operators with variable coefficients, that is, there exist operators, with an imaginary part not identical to zero, that do not change sign and are non-(GH) on $\mathbb{T}^2$.
\end{remark}

\smallskip

The hypothesis of logarithmic growth, added to theorem \ref{theorem-nu-zero-1}, implies that it is enough to consider constant coefficients operators, therefore the next step is to study operators that do not satisfy this condition. We are going to start by considering the operator
\begin{equation*}
L = D_t + a(t)p(x,D_x) + i b(t)q(x,D_x).
\end{equation*}

If $b$ changes sign and $\{\nu_j\}$ has super-logarithmic growth, then $L$ is not (GH), independently of value of $\liminf\omega_j$. Indeed, in this case, one can construct a singular solution using the ideas shown on the proof of theorem \ref{prop-3}.

The next result includes the remaining cases.

\begin{theorem}\label{theorem-nu-zero-2}
Admit that the imaginary part $b$ does not change sign and is not identical to zero. Also, assume that $\{\nu_j\}$ has super-logarithmic growth. If $\{j\in\N; \nu_j = 0\}$ is finite, the following statements are true:

\begin{enumerate}
        \item [i.] when $\displaystyle \liminf \omega_j  \neq 0$, the operator $L$ is (GH);

        \item[ii.] when $\displaystyle \liminf\omega_j = 0$, the operator $L$ is (GH) if, and only if,  $\Gamma_{a_0}$ is finite and $a_0$ is non-Liouville with respect to the sequence $\{\mu_j\}$.
\end{enumerate}
\end{theorem}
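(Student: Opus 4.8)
The plan is to run the argument of Theorems \ref{prop-1} and \ref{theorem-nu-zero-1} again, the essential new feature being that the super-logarithmic growth of $\{\nu_j\}$ forbids the conjugation $\Psi_b$ of Section \ref{reduct-section}, so the imaginary part of $L$ cannot be normalised. First I would reduce to $iL$ (Remark \ref{L-GH-iff-iL-GH}), pass to the $x$-Fourier system \eqref{Eq1-propA}, apply Proposition \ref{norm.form.theo-real} (the automorphism $\Psi_a$, valid with no growth restriction) to replace $a(t)$ by its mean $a_0$, and assume as in \eqref{b-leq-0-imply-b0<0} that $b\leqslant 0$ on $[0,2\pi]$, so $b_0<0$. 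Since $b_0\neq 0$ and $\{j:\nu_j=0\}$ is finite, $c_j^0\in i\Z$ for only finitely many $j$, so for large $j$ the coefficient $u_j$ is given by \eqref{sol1-Eq2-propA} if $\nu_j>0$ and by \eqref{sol1-Eq3-propA} if $\nu_j<0$. The decisive point — exactly as in Theorem \ref{prop-1}, and this is where the sign condition on $b$ takes over the role of the unboundedness of $\{\nu_j\}$ — is that $\int_0^{2\pi}e^{\nu_j\int_{t-s}^{t}b(\tau)d\tau}ds\leqslant 2\pi$ whenever $\nu_j>0$ and $\int_0^{2\pi}e^{-\nu_j\int_{t}^{t+s}b(\tau)d\tau}ds\leqslant 2\pi$ whenever $\nu_j<0$; hence \eqref{estim1-derivatives}--\eqref{estim2-derivatives} reduce everything to controlling the prefactors $\Theta_j$ (for $\nu_j>0$) and $\Theta_j e^{2\pi\nu_j b_0}$ (for $\nu_j<0$).

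For part (i), $\liminf\omega_j\neq 0$ means $\omega_j\geqslant c>0$ for large $j$, so $\Theta_j=\omega_j^{-1/2}$ is bounded; for $\nu_j<0$ one writes $\Theta_j e^{2\pi\nu_j b_0}=|e^{2\pi c_j^0}-1|^{-1}=(r^2-2r\cos(2\pi a_0\mu_j)+1)^{-1/2}$ with $r=e^{-2\pi b_0\nu_j}$ and uses $r^2-2r\cos(2\pi a_0\mu_j)+1\geqslant(r-1)^2$ together with the observation (already in Section \ref{remark-mu}) that $|e^{2\pi c_j^0}-1|\to 0$ along a subsequence would force $\nu_j\to 0$ and $a_0\mu_j\to\Z$, hence $\omega_j\to 0$, a contradiction. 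Bounded prefactors, \eqref{estim1-derivatives}--\eqref{estim2-derivatives} and \eqref{f-smooth} yield $|\partial_t^k u_j(t)|\leqslant Cj^{-\eta+k/n}$, so $u\in C^\infty(\T\times M)$; the finitely many exceptional indices are dismissed as in Theorem \ref{prop-1}.

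For the sufficiency half of part (ii), fix a small $\varepsilon_0>0$ and split according to $|\nu_j|\geqslant\varepsilon_0$ or $|\nu_j|<\varepsilon_0$. In the first case $\omega_j\geqslant(e^{2\pi b_0\nu_j}-1)^2$ (and the analogue for $|e^{2\pi c_j^0}-1|$) bounds the prefactor by $(1-e^{2\pi b_0\varepsilon_0})^{-1}$. In the second case $e^{\pm2\pi b_0\nu_j}$ is close to $1$ and $\omega_j=(e^{2\pi b_0\nu_j}-1)^2+2e^{2\pi b_0\nu_j}\bigl(1-\cos(2\pi a_0\mu_j)\bigr)\geqslant c\,\mathrm{dist}(a_0\mu_j,\Z)^2$; since $a_0$ is non-Liouville with respect to $\{\mu_j\}$ this is $\geqslant cC^2 j^{-2\delta}$ for $j\geqslant R$ (which in particular makes $\Gamma_{a_0}$ finite), so the prefactors are $\O(j^{\delta})$ and \eqref{estim1-derivatives}--\eqref{estim2-derivatives} give $|\partial_t^k u_j(t)|\leqslant Cj^{-\eta+k/n+\delta}$, i.e. $L$ is (GH).

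The necessity half of part (ii) is the hard part. By the discussion following \eqref{mu-1}, the hypothesis $\liminf\omega_j=0$ produces a subsequence $\{j_k\}$ along which $\nu_{j_k}\to 0$ and $a_0\mu_{j_k}\to\Z$; along it $e^{\pm2\pi\nu_{j_k}b_0}\to 1$, $\omega_{j_k}$ is comparable to $\nu_{j_k}^2+\mathrm{dist}(a_0\mu_{j_k},\Z)^2$, and the imaginary part of the reduced operator is asymptotically negligible, so $L$ behaves there like $D_t+a(t)p(x,D_x)$. Assuming $\Gamma_{a_0}$ infinite or $a_0$ Liouville with respect to $\{\mu_j\}$, one extracts a further subsequence and integers $\tau_k$ along which $|a_0\mu_{j_k}+\tau_k|$ (and, through the collapse of $\omega$, also $|\nu_{j_k}|$) decay faster than every power of $j_k$, and then imitates the construction of Theorem \ref{prop-2}: take $u_{j_k}(t)=e^{-i\mu_{j_k}A(t)+\nu_{j_k}B(t)}$ corrected by a rapidly decreasing term so as to be $2\pi$-periodic, and check that $u=\sum_{j}u_j(t)\varphi_j(x)\in\D(\T\times M)\setminus C^\infty(\T\times M)$ while $iLu\in C^\infty(\T\times M)$, contradicting (GH). Since every situation falls under (i) or (ii), this finishes the proof. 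The main obstacle is exactly this last step: there is no constant-coefficient normal form available, so the singular solution must be produced directly for the variable-coefficient operator, and one must justify that the resonances and Liouville behaviour of the real part that obstruct (GH) necessarily occur at the indices where $\{\nu_j\}$ collapses to $0$ — this is the structural fact that links the purely real-part condition in the statement to the hypothesis $\liminf\omega_j=0$.
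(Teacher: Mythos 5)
Your treatment of part (i) and of the sufficiency half of part (ii) is correct and follows essentially the same route as the paper: conjugate the real part away with $\Psi_a$ (Proposition \ref{norm.form.theo-real}, valid with no growth restriction), use the constancy of the sign of $b$ (rather than $|\nu_j|\to\infty$) to bound the integral factors in \eqref{estim1-derivatives}--\eqref{estim2-derivatives} by $2\pi$, and then control the prefactors $\Theta_j$ (for $\nu_j>0$) and $\Theta_j e^{2\pi b_0\nu_j}$ (for $\nu_j<0$) either by $\liminf\omega_j\neq 0$ in part (i), or by the split $|\nu_j|\geqslant\varepsilon_0$ versus $|\nu_j|<\varepsilon_0$ together with the cosine lower bound \eqref{ine-cos} and the non-Liouville inequality in part (ii). The paper compresses all of this into the single observation that one "can recapture \eqref{1-a} and \eqref{1-b}" and then argue as in Theorem \ref{theorem-nu-zero-1}.

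For the necessity half of part (ii) you have correctly identified a genuine gap, and the paper does not fill it either: at that point its proof merely says "the same ideas shown in theorem \ref{prop-2}." In Theorem \ref{prop-2} one has $b\equiv 0$; the singular datum $u$ is carried by a subsequence $\{j_k\}$ along which $a_0\mu_{j_k}$ is resonant or Liouville, and $Lu$ is rapidly decreasing because the only surviving residue is $i(a_0\mu_{j_k}-\tau_k)u_{j_k}$. With $b\not\equiv 0$ the same ansatz also produces the term $ib(t)\nu_{j_k}u_{j_k}$, which is rapidly decreasing only if $\nu_{j_k}\to 0$ faster than every power of $j_k$ along that subsequence. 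Nothing in the stated hypotheses forces the Liouville/resonance subsequence of $\{a_0\mu_j\}$ to coincide with (a subsequence of) the indices where $\nu_j\to 0$ and $a_0\mu_j\to\Z$, so the construction as sketched --- by you, and implicitly by the paper --- does not go through in general; if $a_0$ is Liouville only along indices where $\nu_j$ stays bounded away from $0$, the imaginary part keeps $\inf_n|n+a_0\mu_j+ib_0\nu_j|$ polynomially bounded below there and the operator may well remain globally hypoelliptic. Either an extra linkage hypothesis is needed, or the conclusion of part (ii) should be restated as a two-parameter Diophantine condition of the form $\mathrm{dist}(a_0\mu_j,\Z)^2+\nu_j^2\geqslant C j^{-2\delta}$ for large $j$. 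You are right to flag this as the obstacle, and your explicit acknowledgment is more informative than the paper's elision.
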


\begin{proof}

When $\displaystyle \liminf \omega_j \neq 0$ the sequence $\{\Theta_j\}$ is bounded, thus item \textit{i.}\!\! is a consequence of theorem
\ref{main-thm-dim-1}.

Now, admit that $\displaystyle \liminf\omega_j = 0$ and let $u \in \D(\T \times M)$ be a solution of $(iL)u = f \in C^{\infty}(\T \times M)$. Then we obtain the sequence of differential equations
\begin{equation}\label{sol-theorem-nu-zero-2}
\partial_t u_j(t) + (-b(t)\nu_j + i a(t)\mu_j) u_j(t) = f_j(t), \ t \in \T, \ j\in \N.
\end{equation}

We can assume $b_0<0$. Since $\{j\in\N; \nu_j = 0\}$ is finite, then the unique solutions of \eqref{sol-theorem-nu-zero-2} are given by \eqref{sol1-Eq2-propA}, or \eqref{sol1-Eq3-propA}.

Thus, by inequalities \eqref{estim1-derivatives} and \eqref{estim2-derivatives}, we can recapture \eqref{1-a} and \eqref{1-b}. The proof will use the same ideas shown in theorem \ref{theorem-nu-zero-1}.

\end{proof}

\begin{example} \label{pro1-ex2}
Let $\{\mu_j\}_{j \in \mathbb{N}}$ be the sequence
\begin{equation*}
\mu_j =
\left\{
\begin{array}{ll}
1/j, \,  \textrm{ if $j$ is odd,}\\[2mm]
 j, \ \ \ \textrm{ if $j$ is even,}
\end{array}
\right.
\end{equation*}
consider the operators $q$ and ${\cal{L}}$ defined by
\begin{equation*}
q(x,D_x) \cdot u = \sum_{j \in \N} u_j \mu_j \varphi_j(x),
\end{equation*}
for each $u = \sum_{j \in \N} u_j  \varphi_j(x) \in \D(\T_x)$, and
\begin{equation*}
{\cal{L}} = D_t + ( a(t) + i b(t)) q(x,D_x), \ (t,x) \in \T^2 = \T_t\times \T_x.
\end{equation*}

Thus, if $a_0$ is an irrational non-Liouville number then the operator ${\cal{L}}$ is (GH). Indeed, by \eqref{alpha-n-L} we obtain
\begin{equation*}
|a_0 \mu_j + \ell | \geqslant
\left\{
\begin{array}{l}
1/j, \ \textrm{ if $j$ is odd,}\\[2mm]
 j^{\, \delta-1}, \  \textrm{ if $j$ is even,}
\end{array}
\right.
\end{equation*}
for each $\ell \in \Z$.
\end{example}

\section{Remarks on time-dependent coefficients}

In this section we are going to take into consideration a natural extension of the separation of variables case. More specifically, we are interested in the study of the following class of  operators
\begin{equation}\label{L-t-depen}
L = D_t + A(t,x,D_x) + i B(t,x,D_x), \quad (t,x)\in \T \times M,
\end{equation}
which satisfy, for each $t \in \T$, the following conditions
\begin{eqnarray}
& A^*(t,x,D_x) = A(t,x,D_x)  \textrm{ and } \ B^*(t,x,D_x) = B(t,x,D_x), \label{hyp-self-1} \\[2mm]
& [A(t,x,D_x),E(x,D_x)] = [B(t,x,D_x),E(x,D_x)]=0, \label{1-hyp3} \\[2mm]
& [A(t,x,D_x),B(t,x,D_x)]=0, \label{1-hyp4}
\end{eqnarray}
where $E$ is an elliptic pseudo-differential operator in $\Psi^m_{el}(M)$.

As an additional hypothesis we assume that $A(t,x,D_x)$ and $B(t,x,D_x)$ are diagonal operators, i.e., for any $u \in \D(\T\times M)$ we have
\begin{align}
A(t,x,D_x) u & = \sum_{j=1}^\infty \sum_{k=1}^{d_j}\alpha_k^j(t)u_k^j(t) e_k^j(x) \nonumber \\[2mm]
             & = \sum_{j\in \N} \left\langle {\mathcal{A}}_j(t) \cdot {U_j}(t), {e^j}(x)  \right\rangle_{\C^{d_j}} \label{A(t)-mult},
\end{align}
and
\begin{align}
B(t,x,D_x) u & = \sum_{j=1}^\infty \sum_{k=1}^{d_j}\beta_k^j(t)u_k^j(t) e_k^j(x) \nonumber \\[2mm]
             & = \sum_{j\in \N} \left\langle {\mathcal{B}}_j(t) \cdot {U_j}(t), {e^j}(x)  \right\rangle_{\C^{d_j}} \label{B(t)-mult},
\end{align}
where
\begin{equation*}
{\mathcal{A}}_j(t) =  \mbox{diag} \left ( \alpha_1^j(t),   \ldots,  \alpha_{d_j}^j(t) \right) \ \ \textrm{ and } \ \
{\mathcal{B}}_j(t) =  \mbox{diag} \left ( \beta_1^j(t),   \ldots,  \beta_{d_j}^j(t) \right),
\end{equation*}
and we are relying on the notation of subsection \ref{reduct-diag}.

We assume that $\alpha^j_k(t), \beta^j_k(t) \in C^\infty (\T;\R)$ satisfies the first order requirement
\begin{equation}\label{alpha-cond}
\sup_{1\leqslant k\leqslant d_j, j\in \N} \left( \lambda_j^{-1/m},\max \left\{ \sup_{t\in \T} |\alpha^j_k (t)|,
                                                        \sup_{t\in \T} |\beta^j_k (t)| \right\}\right)  \leqslant  C,
\end{equation}

\noindent and the continuous action $C^\infty (\T\times M) \mapsto \D(\T \times M)$ satisfies
\begin{equation}\label{beta-cond}
\sup_{1\leqslant k\leqslant d_j, j\in \N} \left( \lambda_j^{-1/m - \omega^j_k(\ell)},\max \left\{ \sup_{t\in \T} |D_t^\ell\alpha^j_k (t)|,
                                                         \sup_{t\in \T} |D_t^\ell\beta^j_k (t)| \right\}\right)  <  +\infty,
\end{equation}

\noindent for some $\omega^j_k(\ell) \in \R$, where $\ell \in \mathbb{N}.$

Thus, the equation $Lu = f$ can be reduced to a sequence of $d_j \times d_j$ linear systems of ordinary differential equations
\begin{equation}\label{time-sys1}
D_t U_j(t) + ({\mathcal{A}}_j(t) + i {\mathcal{B}}_j(t)) U_j(t) = F_j(t),
\end{equation}
or in an equivalent form
\begin{equation}\label{time-sys2}
D_t u_k^j(t) + (\alpha_k^j(t) + i \beta_k^j(t)) u_k^j(t) = f_k^j(t), \ \forall t \in \T,
\end{equation}
for each $k \in \{1, \ldots, d_j\}$.

Thus, since in the time-independent case we can consider the simpler situation where $d_j =1$, for any $j \in \N$, we can also rewrite \eqref{A(t)-mult} and \eqref{B(t)-mult} as
\begin{align}
A(t,x,D_x) u & = \sum_{j=1}^\infty a_j(t)u_j(t) \varphi_j(x), \label{GHgena}\\
B(t,x,D_x) u & = \sum_{j=1}^\infty b_j(t)u_j(t) \varphi_j(x). \label{GHgenb}
\end{align}

Now, by the Weyl formula, conditions \eqref{alpha-cond} and \eqref{beta-cond} are equivalent to
\begin{equation}\label{GHgenD1a}
\max\{ \sup_{t\in \T} |a_j (t)|, \sup_{t\in \T} |b_j (t)| \} \leqslant  C j^{1/n}, \quad j\in \N,
\end{equation}
and
\begin{equation}\label{GHgenD1b}
\sup_{j\in \N} \left( j^{-1/n -r_\ell} \max\{ \sup_{t\in \T} |D_t^\ell a_j (t)|, \sup_{t\in \T} |D_t^\ell b_j (t)| \} \right) < +\infty,
\end{equation}
for some $r_\ell \in \R$, $\ell \in \N$.

\smallskip

\begin{remark}
Clearly, we recaptured the case $a_j(t) = a(t) \mu_j$, $b_j(t) = b(t) \nu_j$, where $r_\ell =0$ for all $\ell\in \N$.

Furthermore, if
\begin{equation*}
L = D_t + (a(t) + ib(t))D_x
\end{equation*}
is a first order differential operator on $\T^2$ (as in J. Hounie \cite{HOU79}, A. Bergamasco \cite{BERG99} and others) then we can take $E(x,D_x) = -\Delta$ on $\T$.

In this situation we have $\sigma_0 = 0$, $d_0=1$, $\sigma_j = j^2$,  $\ds E_j = [e^{-ijx}, \, e^{ijx}]$, $d_j=2$, $j\in \N$ and setting
\begin{equation*}
\psi^j_1 (x) = \varphi_{2j-1}(x) = e^{-ijx} \ \ \textrm{ and } \ \  \psi^j_2 (x) = \varphi_{2j}(x) = e^{ijx},
\end{equation*}
we can rewrite $Lu=f$ as
\begin{equation} \label{GHdoT1}
D_t U_j(t) + \diag ( - a(t) j - ib(t)j, \, a(t) j + ib(t)j) \cdot U_j(t)  =  F_j(t),
\end{equation}
for $t\in \T$ and $j\in \N.$
\end{remark}

\subsection{Reduction to time-independent case}

If $A(t,x,D_x)$ and $B(t,x,D_x)$ are given by \eqref{GHgena} and \eqref{GHgenb}, we define the operators
\begin{align}
A_0(x,D_x) u & = \sum_{j=1}^\infty a_0^ju_j(t) \varphi_j(x), \label{A-geral-Cons}\\
B_0(x,D_x) u & = \sum_{j=1}^\infty b_0^ju_j(t) \varphi_j(x), \label{B-geral-Cons}
\end{align}
where, for each $j \in \mathbb{N},$
\begin{equation}\label{a0-b0-time}
a^j_0 =  (2\pi)^{-1} \int_{0}^{2\pi}a_j(\tau) d\tau \ \ \textrm{ and } \ \  b^j_0 = (2\pi)^{-1} \int_{0}^{2\pi}b_j(\tau) d\tau.
\end{equation}

Now, consider the following sequences of functions in $C^{\infty}(\T)$
\begin{equation*}
\widetilde{A}_j(t) = \int_{0}^{t}a_j(s)ds -a_0^j t, \ \ \widetilde{B}_j(t) = \int_{0}^{t}b_j(s)ds -b_0^j \, t,
\end{equation*}
and, for each for each $u = \sum_{j \in \N} u_j(t) \varphi_j(x) \in \D(\T \times M)$, set
\begin{equation*}
\Psi_{A,B} \cdot u \doteq \sum_{j \in \N} e^{\widetilde{B}_j(t) - i \widetilde{A}_j(t)}u_j(t) \varphi_j(x)
\end{equation*}

\begin{theorem}\label{reduction-time-ind}
Let $\{\tau_j\}$ be the sequence defined by
\begin{equation}\label{max-Bj}
\tau_j = \ds \max_{t \in \T} |b_j(t)|, \ j \in \N.
\end{equation}
and admit that $\{\tau_j\}$ has at most logarithmic growth, i.e.,
   \begin{equation}\label{boundhyp-geral}
                         \limsup_{j \to \infty} \dfrac{\tau_j}{\log (j)} = \tau < + \infty.
   \end{equation}
In these conditions, the following statements hold:
\begin{itemize}
        \item[i.]  $\Psi_{A,B}$ is an isomorphism of the spaces $\D(\T \times M)$ and $C^{\infty}(\T \times M)$;

        \item[ii.] $(\Psi_{A,B})^{-1}\circ L \circ \Psi_{A,B}= L_{A_0,B_0}$ where
\begin{equation}\label{Const-geral}
L_{A_0,B_0} = D_t + A_0(x,D_x) + i B_0(x,D_x);
\end{equation}

  \item[iii.] $L$ is (GH) if, and only if, $L_{A_0,B_0}$ is (GH).
\end{itemize}
\end{theorem}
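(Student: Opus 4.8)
The plan is to follow the template of Theorem~\ref{norm.form.theo}, Corollary~\ref{corollary-normform} and Proposition~\ref{GH-conjugation}, of which the present statement is the variable-coefficient analogue; the only genuinely new feature is that the exponential weights now carry the $t$-dependent symbols $a_j(t),b_j(t)$ instead of the products $a(t)\mu_j$, $b(t)\nu_j$, so the derivative bounds \eqref{GHgenD1a}--\eqref{GHgenD1b} replace Proposition~\ref{propseque} in the estimates.

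First I would establish item~(i). Set $\psi_j(t)\doteq e^{\widetilde B_j(t)-i\widetilde A_j(t)}u_j(t)$, so that $|\psi_j(t)|=e^{\widetilde B_j(t)}|u_j(t)|$. Since $|\widetilde B_j(t)|\leqslant 4\pi\tau_j$ on $[0,2\pi]$, the logarithmic-growth hypothesis \eqref{boundhyp-geral} gives, for every $\varepsilon>0$, an index $j_0$ such that $e^{\widetilde B_j(t)}\leqslant j^{4\pi(\tau+\varepsilon)}$ for all $j\geqslant j_0$, uniformly in $t$. Differentiating $\psi_j$ by the Leibniz rule together with the higher-order chain rule, every factor $\partial_t^{\ell}(\widetilde B_j-i\widetilde A_j)=\partial_t^{\ell-1}\big(b_j-b_0^j-i(a_j-a_0^j)\big)$ is polynomially bounded in $j$ by \eqref{GHgenD1b}, while $\partial_t^{\ell}u_j$ is polynomially bounded by \eqref{2-prop-smooth-2} since $u\in\D(\T\times M)$. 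Hence $\{\psi_j\}$ satisfies \eqref{2-prop-smooth-2} for some integer $N$, so $\Psi_{A,B}u\in\D(\T\times M)$; applying the same estimate to the weight $e^{-\widetilde B_j(t)+i\widetilde A_j(t)}$ shows the formal inverse also maps $\D(\T\times M)$ into itself, so $\Psi_{A,B}$ is an automorphism of $\D(\T\times M)$. Repeating the computation for $u\in C^{\infty}(\T\times M)$, where $\partial_t^{\ell}u_j=\O(j^{-\eta})$ for every $\eta>0$, the fixed polynomial factor $j^{4\pi(\tau+\varepsilon)}$ is absorbed and $\Psi_{A,B}u\in C^{\infty}(\T\times M)$, exactly as in Corollary~\ref{corollary-normform}. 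This proves item~(i).

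For item~(ii), since $A_0$, $B_0$, $A(t,x,D_x)$, $B(t,x,D_x)$ all act diagonally on the orthonormal basis $\{\varphi_j\}$, the conjugation identity reduces to a scalar identity on each Fourier mode: using $\widetilde B_j'(t)=b_j(t)-b_0^j$ and $\widetilde A_j'(t)=a_j(t)-a_0^j$, a direct computation gives
\begin{align*}
\big(D_t+a_0^j+ib_0^j\big)\big(e^{-\widetilde B_j(t)+i\widetilde A_j(t)}u_j(t)\big)
 = e^{-\widetilde B_j(t)+i\widetilde A_j(t)}\big(D_t u_j(t)+(a_j(t)+ib_j(t))u_j(t)\big),
\end{align*}
which is precisely $(\Psi_{A,B})^{-1}\circ L\circ\Psi_{A,B}=L_{A_0,B_0}$, the verbatim analogue of Proposition~\ref{GH-conjugation}(ii). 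Item~(iii) then follows formally: given $v\in\D(\T\times M)$ with $L_{A_0,B_0}v\in C^{\infty}(\T\times M)$, set $u=\Psi_{A,B}v$, so that $Lu=\Psi_{A,B}(L_{A_0,B_0}v)\in C^{\infty}(\T\times M)$ by item~(i); if $L$ is (GH) then $u\in C^{\infty}(\T\times M)$, hence $v=(\Psi_{A,B})^{-1}u\in C^{\infty}(\T\times M)$, and the reverse implication is symmetric.

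The main obstacle will be item~(i): one has to verify that the logarithmic-growth hypothesis \eqref{boundhyp-geral} is exactly strong enough to keep the weight $e^{\widetilde B_j(t)}$ growing at most polynomially in $j$ uniformly in $t$, and, more delicately, that this polynomial control survives differentiation in $t$ once the weaker derivative bounds \eqref{GHgenD1b}, with their a priori unspecified exponents $r_{\ell}$, are substituted into the Leibniz and chain-rule expansions. Once the bookkeeping of these exponents is arranged --- mirroring the three sign cases for $\max_t\widetilde B_j$ and $\min_t\widetilde B_j$ that appear in the proof of Theorem~\ref{norm.form.theo} --- the remaining steps are routine.
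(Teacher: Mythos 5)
Your proposal is correct and follows essentially the same route as the paper: the central point is that hypothesis \eqref{boundhyp-geral} bounds $e^{\widetilde B_j(t)}$ polynomially in $j$ uniformly in $t$, while the derivative controls \eqref{GHgenD1b} keep the Leibniz/chain-rule expansion polynomially bounded, exactly as in Theorem~\ref{norm.form.theo}, and items~(ii)--(iii) are the verbatim transcription of Proposition~\ref{GH-conjugation}. The paper's own proof merely records the exponent estimate $|\partial_t^k(e^{\widetilde B_j-i\widetilde A_j}u_j)|\leqslant Cj^{\delta}e^{\widetilde B_j(t)}\leqslant Cj^{\delta+2\pi(\tau+\epsilon)}$ and refers the rest to subsection~\ref{reduct-section}; you have simply written out the details it leaves implicit.
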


\begin{proof}
This demonstration uses the same ideas of reduction to normal form shown in subsection \ref{reduct-section}. We point out that, to prove that $\Psi_{A,B}$ is a well defined map from $\D(\T \times M)$ to $\D(\T \times M)$, it is enough to notice that for any $\epsilon>0$ there exist $j_0 \in \N$ such that
\begin{equation*}
\tau_j \leqslant \log(j^{\, \tau + \epsilon}), \ j\geqslant j_0,
\end{equation*}
thus, for any $k \in \N_0$ we obtain $\delta=\delta(k)$ and a positive constant $C$, such that
\begin{align*}
\left|\partial_t^k\left( e^{\widetilde{B}_j(t) - i \widetilde{A}_j(t)} u_j(t) \right)\right| & \leqslant C j^{\delta} e^{\widetilde{B}_j(t)} \\[2mm]
                                                                               & \leqslant C j^{\delta} e^{2 \pi \tau_j} \\[2mm]
                                                                               & \leqslant C j^{\delta + 2 \pi (\tau + \epsilon)}.
\end{align*}
\end{proof}

\begin{example}
Let $\alpha, \beta \in \R$ and define
\begin{eqnarray*}
A(t,x,D_x) u &=& \sum_{j=1}^\infty ( \alpha + \cos(j t)) u_j(t) \varphi_j(x), \ \ \textrm{ and } \\
B(t,x,D_x) u &=& \sum_{j=1}^\infty ( \beta + \sin(j t) ) u_j(t) \varphi_j(x).
\end{eqnarray*}
for each $u = \sum_{j \in \N} u_j(t) \varphi_j(x) \in \D(\T \times M)$.

We have $a_0^j = \alpha$, $b_0^j = \beta$ and $\tau_j \leq 1 + |\beta|$, for all $j \in \N$. Then, $L=A + i B$ is (GH) if, and only if,
\begin{equation*}
L_{A_0,B_0} = D_t + (\alpha + i \beta) \ \mbox{ is (GH).}
\end{equation*}

If $\alpha = \beta = 0$ then $L$ is not (GH) because $L_{A_0,B_0} = D_t$. But, if $|\alpha|+|\beta| \neq 0$ then the operator $L_{A_0,B_0}$ is (GH) and therefore $L$ is (GH).
\end{example}

\

\begin{remark}
In the multidimensional case ($d_j\geqslant 1)$ we set the matrices
\begin{equation*}
\mathcal{A}_j^0 = (2\pi)^{-1}\int_{0}^{2\pi}{\mathcal{A}}_j(t)dt \ \ \textrm{ and }
\ \ \mathcal{B}_j^0 = (2\pi)^{-1}\int_{0}^{2\pi}{\mathcal{B}}_j(t)dt,
\end{equation*}
where ${\mathcal{A}}_j(t)$ and ${\mathcal{B}}_j(t)$ are given in \eqref{A(t)-mult} and \eqref{B(t)-mult}.

Now, we define the functions
\begin{equation*}
\widetilde{{\mathcal{A}}}_j(t) = \int_{0}^{t}{\mathcal{A}}_j(s)ds - t \mathcal{A}_j^0, \ \
\widetilde{{\mathcal{B}}}_j(t) = \int_{0}^{t}{\mathcal{B}}_j(s)ds - t \mathcal{B}_j^0,
\end{equation*}
and for each $u \in \D(\T \times M)$ set

\begin{equation*}
\Psi_{{\mathcal{A}},{\mathcal{B}}} \cdot u \doteq \sum_{j \in \N}
\left\langle e^{\widetilde{{\mathcal{B}}}_j(t) - i \widetilde{{\mathcal{A}}}_j(t)} \cdot {U_j}(t), {e^j}(x)  \right\rangle_{\C^{d_j}}.
\end{equation*}

Next, let
\begin{equation*}
\tau_k^j = \ds \max_{t \in \T} |\beta_k^j(t)|, \ \forall j \in \N \ \textrm{ and } \ k \in \{1, \ldots, d_j\},
\end{equation*}
and define the sequence
\begin{equation*}
\{\tau_j\} \doteq  \ \{ \tau_1^1, \ldots, \tau_1^{d_1}, \tau_2^{1}, \ldots, \tau_2^{d_2}, \ldots, \tau_j^{1}, \ldots,
 \tau_j^{d_j}\}.
\end{equation*}

Thus,   we recapture theorem \ref{reduction-time-ind}.

\end{remark}

\subsection{Global hypoellipticity}

Let $L$ be the operator \eqref{L-t-depen}, as in the beginning of this section, with $A(t,x,D_x)$ and $B(t,x,D_x)$ satisfying \eqref{GHgena} and \eqref{GHgenb}. If $u \in \D(\T \times M)$, an equation $(iL)u = f$ is equivalent to the following sequence of differential equations
\begin{equation}\label{GHgen1}
\partial_t u_j(t) + c_j(t) u_j(t) = f_j(t), t \in \T, \, j\in \N,
\end{equation}

\noindent where $c_j(t) =-b_j(t) + i a_j(t)$.

Let $c^j_0 = -b^j_0 + ia^j_0$, where $a^j_0$ and $b^j_0$ are given in \eqref{a0-b0-time}. Then, for each $j \in \N$ such that $c^j_0 \notin i\Z$ the unique solution of the differential equation \eqref{GHgen1} is
\begin{equation}\label{sol1}
u_j(t)=(1 - e^{-2\pi c^j_0} )^{-1} \int_{0}^{2\pi}{e^{\int_{t}^{t-s} c_j(\tau)d\tau}f_j(t-s)ds},
\end{equation}
or equivalently,
\begin{equation}\label{sol2}
u_j(t) =  (e^{2\pi c^j_0}  - 1 )^{-1} \int_{0}^{2\pi}{e^{\int_{t}^{t+s} c_j(\tau)d\tau}f_j(t + s)ds}.
\end{equation}

As in the separation of variables case, the solution \eqref{sol1} satisfies
\begin{equation}\label{estim1-time}
|\partial^{k}_t u_j(t)|   \leqslant C \ \Theta_j \ j^{-\eta + \delta_k} \ \int_{0}^{2\pi} e^{\int_{t-s}^{t}b_j(\tau)d \tau} ds.
\end{equation}
and \eqref{sol2} satisfies
\begin{equation}\label{estim2-time}
|\partial^{k}_t u_j(t)| \leqslant C \ \Theta_j \ e^{2 \pi b_0^j} \ j^{-\eta + \delta_k} \ \int_{0}^{2\pi}
                               e^{-\int_{t}^{t+s}b_j(\tau)d\tau} ds,
\end{equation}
where $\Theta_j = | 1 - e^{ - 2\pi c^j_0}|^{-1}$, and $\delta_k = k/n+ k r_k$.

\smallskip

For the study of the global regularity of $L$ we split this subsection in two parts: first we are going to propose a generalization of the ``no change of sign condition'', and afterwards, we are going to introduce the notion of ``change of sign condition''.

\subsubsection{General non-change sign condition}

Let $L$ be the operator \eqref{L-t-depen}, where $A(t,x,D_x)$ and $B(t,x,D_x)$ are given in \eqref{GHgena} and \eqref{GHgenb}. We introduce the following notion of the non-change sign condition.

\begin{definition}[GNCS]
We say that the imaginary part $B(t,x,D_x)$ of $L$ satisfies the general non-change sign condition (GNCS) if there exists $j_0 \in \N$ such that $\forall j > j_0$ and the functions $b_j$ do not change sign. In this case we write
\begin{equation*}
\N = \{1, \ldots, j_0 \} \cup {\cal{J}}^- \cup {\cal{J}}^+,
\end{equation*}
where
\begin{equation*}
{\cal{J}}^- =\{j\geqslant j_0; \, b_j(t) \leqslant  0, \ \forall t \in \T\}
\ \ \textrm{ and } \ \
{\cal{J}}^+ =\{j\geqslant j_0; \, b_j(t) \geqslant  0, \ \forall t \in \T\}.
\end{equation*}
\end{definition}

\smallskip

\begin{remark}
Note that if $b_j(t) = b(t) \nu_j$, for some real sequence $\{\nu_j\}_{j \in \N}$ (as in the separation of variables case), then the (GNCS) condition is equivalent to requiring that $b$ does not change sign. For instance, if $b(t)\geqslant 0$, then
\begin{equation*}
{\cal{J}}^- =\{j \in \N; \, \nu_j \leqslant 0\}
\ \ \textrm{ and } \ \
{\cal{J}}^+ =\{j \in \N; \, \nu_j \geqslant 0\}.
\end{equation*}
\end{remark}

Now we introduce some results about the (GH) of $L$.

\begin{theorem}\label{gncs1}
If $B(t,x,D_x)$ satisfies the (GNCS) condition and we have $\lim|b_0^j| = \infty,$ then $L$ is (GH).
\end{theorem}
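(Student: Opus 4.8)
The plan is to adapt the proof of Theorem \ref{prop-1} to the present setting; the only genuinely new point is that the sign of $b_j(t)$ now varies with the index $j$, so the trick of reducing to a single sign via the change of variables $(t,x)\mapsto(-t,x)$ is no longer available. Instead I would work directly with the decomposition $\N=\{1,\dots,j_0\}\cup\mathcal{J}^-\cup\mathcal{J}^+$ coming from the (GNCS) condition and handle the two infinite index sets with the two equivalent integral representations \eqref{sol1} and \eqref{sol2}, choosing for each $j$ the one whose exponential factor is controlled.

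First I would dispose of the low frequencies. Since $c_j^0=-b_0^j+ia_0^j$ and $\lim|b_0^j|=\infty$, the relation $c_j^0\in i\Z$ (which forces $b_0^j=0$) holds for at most finitely many $j$; after enlarging $j_0$ I may assume that for every $j>j_0$ we have $c_j^0\notin i\Z$, and moreover $b_0^j<0$ for $j\in\mathcal{J}^-$, $b_0^j>0$ for $j\in\mathcal{J}^+$ (a strict sign holds eventually because $b_j\le 0$ together with $b_0^j\ne 0$ gives $b_0^j<0$, and symmetrically). The components $u_j$ with $j\le j_0$ solve on $\T$ the first-order ODE \eqref{GHgen1} with smooth right-hand side, hence are smooth, and they contribute only finitely many terms to the $x$-Fourier expansion of $u$; so it suffices to verify the rapid-decay condition \eqref{2-prop-smooth-2} for the tail coefficients $u_j$, $j>j_0$, given by \eqref{sol1} or \eqref{sol2}.

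The core of the argument is the following pair of estimates, which I would derive exactly as in Proposition \ref{prop-theta}. For $j\in\mathcal{J}^-$, $j>j_0$, one has $b_j\le 0$, so $\int_{t-s}^{t}b_j(\tau)\,d\tau\le 0$ for all $s\in[0,2\pi]$, whence $\int_0^{2\pi}e^{\int_{t-s}^{t}b_j(\tau)\,d\tau}\,ds\le 2\pi$ in \eqref{estim1-time}; moreover $\Theta_j=\big(1-2e^{2\pi b_0^j}\cos(2\pi a_0^j)+e^{4\pi b_0^j}\big)^{-1/2}\to 1$ along $\mathcal{J}^-$ because $b_0^j\to-\infty$, so $\{\Theta_j\}_{j\in\mathcal{J}^-}$ is bounded. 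Dually, for $j\in\mathcal{J}^+$, $j>j_0$, one has $b_j\ge 0$, so $-\int_t^{t+s}b_j(\tau)\,d\tau\le 0$ and $\int_0^{2\pi}e^{-\int_t^{t+s}b_j(\tau)\,d\tau}\,ds\le 2\pi$ in \eqref{estim2-time}, while $\Theta_j e^{2\pi b_0^j}=\big(e^{-4\pi b_0^j}-2e^{-2\pi b_0^j}\cos(2\pi a_0^j)+1\big)^{-1/2}\to 1$ because $b_0^j\to+\infty$, so $\{\Theta_j e^{2\pi b_0^j}\}_{j\in\mathcal{J}^+}$ is bounded.

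Finally I would combine these facts. Since $f\in C^\infty(\T\times M)$, for each $k\in\N_0$ and each $\eta>0$ there is $C>0$ with $\sup_{t}|\partial_t^\alpha f_j(t)|\le Cj^{-\eta}$ for $\alpha\le k$ and $j$ large. Inserting this into \eqref{estim1-time} for $j\in\mathcal{J}^-$ and into \eqref{estim2-time} for $j\in\mathcal{J}^+$, and using the boundedness of $\{\Theta_j\}_{j\in\mathcal{J}^-}$ and $\{\Theta_j e^{2\pi b_0^j}\}_{j\in\mathcal{J}^+}$ together with the bounds $\int_0^{2\pi}(\cdots)\,ds\le 2\pi$ above, yields $|\partial_t^k u_j(t)|\le Cj^{-\eta+\delta_k}$ for all large $j$, with $\delta_k$ the fixed exponent appearing in \eqref{estim1-time}--\eqref{estim2-time}. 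As $\eta$ is arbitrary, $\{u_j\}$ satisfies \eqref{2-prop-smooth-2}, so Proposition \ref{prop-smooth-2matr} gives $u\in C^\infty(\T\times M)$; hence $L$ is (GH). The step I expect to require the most care is precisely the sign-splitting: one must make sure that for every sufficiently large $j$ exactly one of the two representations \eqref{sol1}, \eqref{sol2} is the "good" one, and that the generalized Proposition \ref{prop-theta} estimates survive on each of $\mathcal{J}^-$ and $\mathcal{J}^+$ separately — everything else is a routine repetition of the argument of Theorem \ref{prop-1}.
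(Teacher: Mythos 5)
Your proposal is correct and follows essentially the same route as the paper's own proof: split $\N$ into $\{1,\dots,j_0\}\cup\mathcal{J}^-\cup\mathcal{J}^+$, use \eqref{sol1} on $\mathcal{J}^-$ (where $b_j\le0$ makes the exponential $\le 1$ and $b_0^j\to-\infty$ makes $\Theta_j\to1$) and \eqref{sol2} on $\mathcal{J}^+$ (where $b_j\ge0$ and $b_0^j\to+\infty$ make $\Theta_j e^{2\pi b_0^j}\to1$), then insert the rapid decay of $f_j$ into \eqref{estim1-time}--\eqref{estim2-time}. The only additions you make beyond the paper's argument are small housekeeping remarks (strict sign of $b_0^j$ eventually on each index set, the finitely many low frequencies), which are correct but not needed in the paper's slightly terser write-up.
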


\begin{proof}
Since $|b_0^j| \to \infty$ then $b_0^j =0$ at most for a finite numbers of indexes $j$, and therefore it is enough to consider the solutions
\eqref{sol1} and \eqref{sol2}.

If, $j \in {\cal{J}}^-$, we will use the expression \eqref{sol1} and if, $j \in {\cal{J}}^+$, we will use the expression \eqref{sol2}. Note that,
\begin{equation} \label{int<0}
j \in {\cal{J}}^- \  \Rightarrow \ e^{\int_{t-s}^{t}b_j(\tau)d \tau} \leqslant 1 \ \ \textrm{ and } \ \
j \in {\cal{J}}^+ \  \Rightarrow \ e^{-\int_{t}^{t+s}b_j(\tau)d \tau} \leqslant 1.
\end{equation}

Now, for $j \in {\cal{J}}^-$ we have $b_0^j \to - \infty$ and hence
\begin{equation*}
\lim_{j \to \infty} \Theta_j = \lim_{j \to \infty} (e^{4\pi b_0^j} - 2e^{2\pi b_0^j}\cos(2\pi a_0^j) + 1)^{-1/2} =1,
\end{equation*}
and for $j \in {\cal{J}}^+$ we have $b_0^j \to + \infty$ and hence
\begin{equation*}
\lim_{j \to \infty} \Theta_j e^{2\pi b_0^j}= \lim_{j \to \infty} (1  - 2e^{-2\pi b_0^j}\cos(2\pi a_0^j) + e^{-4\pi b_0^j})^{-1/2} =1.
\end{equation*}

Thus, by using \eqref{int<0}, the estimates \eqref{estim1-time} and \eqref{estim2-time} becomes
\begin{equation*}
|\partial^{k}_t u_j(t)| \leqslant C \ j^{-\eta + \delta_k},
\end{equation*}
and then $L$ is (GH).

\end{proof}

\begin{corollary}
Admit that $B(t,x,D_x)$ satisfies the (GNCS) condition. If $\{b_0^j\}$ has no subsequence converging to zero, then $L$ is (GH).
\end{corollary}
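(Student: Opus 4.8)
The plan is to follow the proof of Theorem \ref{gncs1} verbatim, replacing the hypothesis $\lim|b_0^j|=\infty$ by the weaker one wherever it was actually used. The first step is to reformulate the assumption: $\{b_0^j\}$ has no subsequence converging to zero precisely when there exist $C>0$ and $j_0\in\N$ with $|b_0^j|\geq C$ for all $j\geq j_0$. In particular $b_0^j\neq 0$, hence $c_0^j=-b_0^j+ia_0^j\notin i\Z$, for every $j\geq j_0$; so for a solution $u\in\D(\T\times M)$ of $(iL)u=f\in C^\infty(\T\times M)$, the Fourier coefficient $u_j$ is, for $j\geq j_0$, uniquely given by \eqref{sol1} when $j\in\mathcal{J}^-$ and by \eqref{sol2} when $j\in\mathcal{J}^+$. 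The finitely many remaining indices are irrelevant to (GH).

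Next, exactly as in Theorem \ref{gncs1}, the (GNCS) condition gives $e^{\int_{t-s}^{t}b_j(\tau)d\tau}\leq 1$ for $j\in\mathcal{J}^-$ and $e^{-\int_{t}^{t+s}b_j(\tau)d\tau}\leq 1$ for $j\in\mathcal{J}^+$, so the integral factors in \eqref{estim1-time} and \eqref{estim2-time} are bounded by $2\pi$. The single place where $\lim|b_0^j|=\infty$ was genuinely used was to make $\{\Theta_j\}$ (for $j\in\mathcal{J}^-$) and $\{\Theta_j e^{2\pi b_0^j}\}$ (for $j\in\mathcal{J}^+$) bounded, and I would recover this from $|b_0^j|\geq C$ alone. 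Writing $\Theta_j^{-2}=e^{4\pi b_0^j}-2e^{2\pi b_0^j}\cos(2\pi a_0^j)+1=|1-e^{2\pi b_0^j}e^{-2\pi i a_0^j}|^{2}$, the reverse triangle inequality yields $|1-e^{2\pi b_0^j}e^{-2\pi i a_0^j}|\geq|1-e^{2\pi b_0^j}|$. For $j\in\mathcal{J}^-$ one has $b_0^j\leq -C$, so $e^{2\pi b_0^j}\leq e^{-2\pi C}<1$ and $\Theta_j\leq(1-e^{-2\pi C})^{-1}$; for $j\in\mathcal{J}^+$ one has $b_0^j\geq C$, so $e^{2\pi b_0^j}\geq e^{2\pi C}>1$ and $\Theta_j e^{2\pi b_0^j}=e^{2\pi b_0^j}|1-e^{2\pi b_0^j}e^{-2\pi i a_0^j}|^{-1}\leq e^{2\pi b_0^j}(e^{2\pi b_0^j}-1)^{-1}\leq(1-e^{-2\pi C})^{-1}$.

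With these uniform bounds, \eqref{estim1-time} and \eqref{estim2-time} reduce to $|\partial_t^k u_j(t)|\leq C\,j^{-\eta+\delta_k}$ for every $\eta>0$, with $\delta_k=k/n+k r_k$ independent of $\eta$. Hence, for each prescribed decay order $N$, choosing $\eta$ with $-\eta+\delta_k\leq -N$ shows that $\{u_j\}$ satisfies \eqref{2-prop-smooth-2}, so $u\in C^\infty(\T\times M)$ by Proposition \ref{prop-smooth-2matr}; therefore $L$ is (GH). Together with the companion bound for the $t$-derivatives of the exponential terms (already established for the separation-of-variables model and carried over through \eqref{GHgenD1b}), this closes the argument.

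The only delicate point — and where I would be most careful — is the boundedness of $\Theta_j$: unlike in Theorem \ref{gncs1}, where $\Theta_j\to 1$ or $\Theta_j e^{2\pi b_0^j}\to 1$, here the factor $\cos(2\pi a_0^j)$ cannot be controlled by passing to a limit. The resolution is to discard it via the reverse triangle inequality and to use the fact that (GNCS) forces $e^{2\pi b_0^j}$ to lie on a fixed side of $1$, uniformly away from it by virtue of $|b_0^j|\geq C$; this is exactly the combination of the two hypotheses in the statement, and neither one alone suffices.
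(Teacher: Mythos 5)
Your proof is correct, and it slightly improves on the route the paper takes. The paper's one-sentence proof defers to the ideas around Proposition~\ref{hy-nu-fraca}, i.e.\ the contradiction argument in Section~\ref{remark-mu}: assume a subsequence of $\omega_j$ tends to $0$, extract a further subsequence, argue via \eqref{om-1}--\eqref{mu-1} that $e^{2\pi b_0^j}\to 1$, hence $b_0^j\to 0$, contradicting the hypothesis. Your approach sidesteps this limiting argument altogether by bounding $\Theta_j$ and $\Theta_j e^{2\pi b_0^j}$ directly: the reverse triangle inequality $\left|1-e^{2\pi b_0^j}e^{-2\pi i a_0^j}\right|\geq\left|1-e^{2\pi b_0^j}\right|$ removes the $\cos(2\pi a_0^j)$ term, and the (GNCS) condition together with $|b_0^j|\geq C$ places $e^{2\pi b_0^j}$ on a fixed side of $1$, uniformly away from it. This gives explicit, uniform bounds $\Theta_j\leq(1-e^{-2\pi C})^{-1}$ on $\mathcal{J}^-$ and $\Theta_j e^{2\pi b_0^j}\leq(1-e^{-2\pi C})^{-1}$ on $\mathcal{J}^+$ without extracting any subsequences, whereas the paper's argument only yields boundedness indirectly. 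The remaining steps (finitely many bad indices, $e^{\int_{t-s}^{t}b_j(\tau)d\tau}\leq 1$ on $\mathcal{J}^-$, etc.) are handled just as in Theorem~\ref{gncs1}, which you reproduce correctly. You also identify precisely the point where both hypotheses are jointly needed; nothing is missing.
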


\begin{proof}
Using the ideas, developed before, in proposition \ref{hy-nu-fraca}, it becomes clear that we obtain the same conclusions of theorem \ref{gncs1} because $0 < C \leqslant |b_0^j|$, for $j\to \infty$.

\end{proof}

As in section \ref{remark-mu}, we are interested in the consequences to the (GH) when zero is an accumulation point of the sequence $\{b_0^j\}$.

We define the set
\begin{equation*}
\Gamma_{A} = \{j \in \N; \, a_0^j \in \Z\},
\end{equation*}
and we consider the following limit
\begin{equation*}
\kappa \doteq\liminf_{j \to \infty} \left( e^{ 4 \pi b_0^j}  -2e^{ 2 \pi b_0^j} \cos(2\pi a_0^j)+ 1\right).
\end{equation*}

Thus, the proof of the following result is a combination of theorems \ref{theorem-nu-zero-2} and \ref{gncs1}.

\begin{theorem}
Admit that $B(t,x,D_x)$  satisfies the (GNCS) condition, zero is an accumulation point of $\{b_0^j\}$ and $b_0^j = 0$ at most for a finite number of indexes $j$. Then:

\begin{enumerate}
        \item [i.] if $\kappa \neq 0$, the operator $L$ is (GH);

        \item [ii.] if $\kappa = 0$, the operator $L$ is (GH) if, and only if, $\Gamma_{A}$ is finite and
\begin{equation}\label{nonLiouville-time}
\inf_{\l \in \Z} |a_0^j + \l| \geqslant C j^{-\delta}, \ \forall j \geqslant R,
\end{equation}
for some $\delta \geqslant 0$, $C>0$ and $R\gg 1$.
\end{enumerate}

\end{theorem}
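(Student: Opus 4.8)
The plan is to run the argument of Theorem~\ref{gncs1} mode by mode, splitting the Fourier series of a putative solution according to the (GNCS) condition, and then to import the Diophantine bookkeeping from Theorems~\ref{prop-2} and \ref{theorem-nu-zero-2}. First I would write $(iL)u=f$ as the system \eqref{GHgen1}, $\partial_t u_j+c_j(t)u_j=f_j$ with $c_j(t)=-b_j(t)+ia_j(t)$ and $c_0^j=-b_0^j+ia_0^j$. Since $\{j:\ b_0^j=0\}$ is finite we have $c_0^j\notin i\Z$ for all large $j$, so for those $j$ the unique periodic solution is \eqref{sol1} or \eqref{sol2} and the estimates \eqref{estim1-time}, \eqref{estim2-time} apply. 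Using (GNCS), for $j>j_0$ I would use \eqref{sol1} when $j\in\mathcal{J}^{-}$ (so $b_j\le0$ and $\int_{t-s}^{t}b_j\le0$) and \eqref{sol2} when $j\in\mathcal{J}^{+}$ (so $b_j\ge0$ and $\int_{t}^{t+s}b_j\ge0$); in both cases the exponential integral factor in \eqref{estim1-time}--\eqref{estim2-time} is $\le2\pi$, leaving
\[
|\partial_t^k u_j(t)|\le C\,j^{-\eta+\delta_k}\,\Theta_j\ \ (j\in\mathcal{J}^{-}),\qquad
|\partial_t^k u_j(t)|\le C\,j^{-\eta+\delta_k}\,\Theta_j e^{2\pi b_0^j}\ \ (j\in\mathcal{J}^{+}),
\]
for every $k\in\N_0$, arbitrary $\eta>0$, and the exponents $\delta_k$ of \eqref{estim1-time}. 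Thus everything reduces to controlling $\{\Theta_j\}$ on $\mathcal{J}^{-}$ and $\{\Theta_j e^{2\pi b_0^j}\}$ on $\mathcal{J}^{+}$, via $\Theta_j^{-2}=|1-e^{2\pi b_0^j}e^{-2\pi ia_0^j}|^2$ and $(\Theta_j e^{2\pi b_0^j})^{-2}=|1-e^{-2\pi b_0^j}e^{2\pi ia_0^j}|^2$.

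For part~(i), $\kappa\neq0$ is exactly the statement that $\Theta_j^{-2}$ stays bounded below, so $\{\Theta_j\}$ is bounded; and if $\{\Theta_j e^{2\pi b_0^j}\}$ were unbounded along a subsequence then $(\Theta_j e^{2\pi b_0^j})^{-2}\to0$ would force $b_0^j\to0$ and $\inf_{\ell\in\Z}|a_0^j+\ell|\to0$ there, hence $\Theta_j^{-2}\to0$, contradicting $\kappa\neq0$. With both sequences bounded the displayed inequalities give $|\partial_t^k u_j|\le Cj^{-\eta+\delta_k}$ with $\eta$ arbitrary, so $u\in C^\infty(\T\times M)$ and $L$ is (GH).

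For the sufficiency in part~(ii) I would, assuming $\Gamma_A$ finite and \eqref{nonLiouville-time}, estimate $\Theta_j$ elementarily. For $j\in\mathcal{J}^{-}$ put $r=e^{2\pi b_0^j}\le1$: if $b_0^j\le-1$ then $|1-re^{-2\pi ia_0^j}|\ge1-e^{-2\pi}$ and $\Theta_j$ is bounded; if $-1\le b_0^j\le0$ then $|1-re^{-2\pi ia_0^j}|\ge2\sqrt{r}\,|\sin\pi a_0^j|\ge4e^{-\pi}\inf_{\ell\in\Z}|a_0^j+\ell|$, whence by \eqref{nonLiouville-time} $\Theta_j\le Cj^{\delta}$; the same reasoning with $r=e^{-2\pi b_0^j}\le1$ gives $\Theta_j e^{2\pi b_0^j}\le Cj^{\delta}$ on $\mathcal{J}^{+}$. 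Plugging this back yields $|\partial_t^k u_j|\le Cj^{-\eta+\delta_k+\delta}$ for arbitrary $\eta$, hence $u\in C^\infty$ and $L$ is (GH); this is the step where the Nirenberg--Treves-type factor $\inf_{\ell}|a_0^j+\ell|$ enters, exactly as in Theorem~\ref{prop-2}.

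For the necessity in part~(ii) I would build a singular solution when $\Gamma_A$ is infinite or $a_0$ is Liouville with respect to \eqref{nonLiouville-time}. Since $\kappa=0$, fix a subsequence $\{j_k\}$ with $b_0^{j_k}\to0$ and $\inf_{\ell}|a_0^{j_k}+\ell|\to0$; by (GNCS) the $b_{j_k}$ keep a fixed sign, so $\|b_{j_k}\|_{L^1(\T)}=2\pi|b_0^{j_k}|\to0$ and the periodic primitive $\widetilde B_{j_k}(t)=\int_0^t b_{j_k}-t\,b_0^{j_k}$ tends to $0$ uniformly. On those modes set $u_{j_k}(t)=e^{-h_{j_k}(t)}e^{-i\ell_k t}$, where $h_{j_k}=\int_0^{\,t}(c_{j_k}-c_0^{j_k})$ is $2\pi$-periodic and $\ell_k\in\Z$ is nearest to $a_0^{j_k}$: then $u_{j_k}$ is $2\pi$-periodic, $|u_{j_k}(t)|=e^{\widetilde B_{j_k}(t)}\to1$ uniformly, and $\partial_t u_{j_k}+c_{j_k}u_{j_k}=(c_0^{j_k}-i\ell_k)u_{j_k}=:f_{j_k}$ with $|c_0^{j_k}-i\ell_k|=\big((b_0^{j_k})^2+\inf_{\ell}|a_0^{j_k}+\ell|^2\big)^{1/2}$. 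By \eqref{GHgenD1a}--\eqref{GHgenD1b} the derivatives $\partial_t^m u_{j_k}$ grow at most polynomially in $j_k$, so $u=\sum_k u_{j_k}\varphi_{j_k}\in\D(\T\times M)\setminus C^\infty(\T\times M)$, while $Lu$ is smooth as soon as $|c_0^{j_k}-i\ell_k|$ decays faster than every power of $j_k^{-1}$. This last requirement is the main obstacle: it forces one to thin $\{j_k\}$ so that \emph{simultaneously} $|b_0^{j_k}|$ and $\inf_{\ell}|a_0^{j_k}+\ell|$ are super-polynomially small in the index — using $a_0^{j_k}\in\Z$ on $\Gamma_A$ in the resonance case, and a diagonal extraction against the negation of \eqref{nonLiouville-time} in the Liouville case, all the while keeping $b_0^{j_k}\to0$. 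Once that subsequence is secured the construction is the same in spirit as the singular solutions in Theorems~\ref{prop-2} and \ref{theorem-nu-zero-2}, and it simultaneously disposes of the residual case $b_0^{j_k}\to0$ left open by the estimates behind Theorem~\ref{gncs1}, which is exactly why the statement is a genuine amalgam of the two.
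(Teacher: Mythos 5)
The paper offers no explicit proof of this theorem — the text preceding it reads only that the proof ``is a combination of theorems \ref{theorem-nu-zero-2} and \ref{gncs1}'' — so the comparison is against the method those references suggest. Your mode-by-mode reduction, the (GNCS)-driven choice of \eqref{sol1} on $\mathcal{J}^-$ and \eqref{sol2} on $\mathcal{J}^+$, and the bound on the exponential integral by $2\pi$ reproduce exactly Theorem~\ref{gncs1}'s mechanism. Part~(i) is right, including the short extra argument that $(\Theta_je^{2\pi b_0^j})^{-2}\to0$ would force $\Theta_j^{-2}\to0$, contradicting $\kappa\neq0$. For the sufficiency half of part~(ii), your estimate $|1-re^{i\theta}|\ge 2\sqrt{r}\,|\sin(\theta/2)|$ is cleaner and sharper than the cubic inequality $|1-\cos y|\ge|y-2\pi\ell|^3$ that Theorem~\ref{theorem-nu-zero-1} uses (and whose exponents there are also garbled); it gives $\Theta_j\le Cj^{\delta}$ directly from \eqref{nonLiouville-time}, which is what is needed.

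The necessity half of part~(ii) is where the argument genuinely breaks down, and you are right to flag it — but the gap is in the statement itself, not only in your sketch. Your construction needs a subsequence $\{j_k\}$ along which \emph{both} $|b_0^{j_k}|$ and $\inf_{\ell}|a_0^{j_k}+\ell|$ are super-polynomially small (this is also what it would take for $\Theta_j$ itself to grow super-polynomially, since $\Theta_j^{-2}\ge\max\bigl\{(e^{2\pi b_0^j}-1)^2,\,2e^{2\pi b_0^j}(1-\cos2\pi a_0^j)\bigr\}$). The hypothesis $\kappa=0$ produces a subsequence on which both tend to zero but gives no rate; the negation of ``$\Gamma_A$ finite and \eqref{nonLiouville-time}'' controls the rate of $\inf_{\ell}|a_0^j+\ell|$ along some subsequence but says nothing about $b_0^j$ there. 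These index sets need not overlap usefully. Concretely, in the scalar diagonal model take $b_j(t)\equiv 1/j$ and $a_j(t)\equiv a_0^j$ with $\inf_{\ell}|a_0^j+\ell|=2^{-j}$: (GNCS) holds, $\kappa=0$, $\Gamma_A=\emptyset$ and \eqref{nonLiouville-time} fails, so the theorem asserts $L$ is not (GH); yet
\begin{equation*}
\bigl(\Theta_je^{2\pi b_0^j}\bigr)^{-2}=\bigl(1-e^{-2\pi/j}\bigr)^2+2e^{-2\pi/j}\bigl(1-\cos 2\pi a_0^j\bigr)\ \ge\ \bigl(1-e^{-2\pi/j}\bigr)^2\ \sim\ 4\pi^2 j^{-2},
\end{equation*}
so $\Theta_je^{2\pi b_0^j}=O(j)$ and the sufficiency estimate already shows $L$ is (GH). The chain of references the paper invokes (Theorem~\ref{theorem-nu-zero-2} $\to$ Theorem~\ref{theorem-nu-zero-1} $\to$ Theorem~\ref{prop-2}) builds singular solutions only in the $b\equiv0$ setting, where this difficulty cannot arise; once $b_0^j\neq0$, the ``only if'' direction requires either an extra rate assumption on $b_0^j$ along the Liouville/resonant subsequence or a reformulation of the Diophantine condition directly in terms of $|1-e^{-2\pi c_0^j}|$. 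Your proposal cannot be completed as written, but the reason is a defect in the theorem, not a misstep in your approach.
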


\begin{remark}
In the separation of variables case we have $a_j(t)=a(t)\mu_j$. Therefore $a_0^j(t)=a_0\mu_j$ and \eqref{nonLiouville-time} is equivalent to saying that $a_0$ is non-Liouville with respect to the sequence $\mu_j.$

\end{remark}

\subsubsection{General change sign condition}

Let $\{b_j(t)\}_{j \in \N}$ be a sequence of non-zero functions which change sign in $C^\infty (\T:\R)$.

By lemma \ref{lamma-chang.sig} we can find the sequences of partitions
\begin{align}
 & t_0^j < \alpha^*_j < \gamma^*_j < t^*_j <\delta^*_j <\beta^*_j < t_0^j +2\pi  \label{ch-sign-max1} \\[2mm]
 & t_0^j < \alpha_*^j < \gamma_*^j < t_*^j <\delta_*^j <\beta_*^j < t_0^j +2\pi  \label{ch-sign-min1}
\end{align}
and for some positive constants $c^*_j, c_*^j$, the following estimates
\begin{align}
& \max_{ t\in [\alpha^*_j, \gamma^*_j]\cup [\delta^*_j,\beta^*_j] } B_{t^*}^j (t)  < -c^*_j  \label{ch-sign-max2}\\[2mm]
& \min_{t\in  [\alpha_*^j, \gamma_*^j]  \cup [\delta_*^j,\beta_*^j] } B_{t_*}^j (t)  >c_*^j  \label{ch-sign-min2}
\end{align}

\smallskip

We introduce an important notion:

\begin{definition}
We say that the sequence $\{b_j(t)\}$ is change-sign-polynomial interval super-log (CSPIL) if there exists $r\geq 0$ such that
\begin{equation}\label{ch-sign-interv1}
\min \{(\gamma^*_j-\alpha^*_j), (\gamma_*^j-\alpha_*^j),  (\beta^*_j-\delta^*_j), (\beta_*^j-\delta_*^j)\}  = O(j^{-r}), \ \ j\to \infty,
\end{equation}
and
\begin{equation}\label{ch-sign-interv2}
\lim_{j\to \infty}\frac{c_j}{\ln (2+j)}=+\infty
\end{equation}
\end{definition}

\begin{remark}
Condition \eqref{ch-sign-interv1} is quite natural since we require that operator $L$ acts continuously on $C^\infty (\T \times M)$. Consider the following examples on $\T\times \T=\T^2$
\begin{align}
& L_1   = D_t + i \cos (t[|D_x|]^N)       \label{ex1-ch-sign}\\[2mm]
& L_2   = D_t + i \cos (t[e^{|D_x|^N}]),  \label{ex2-ch-sign}
\end{align}
for some $N>0$, where $\cos (t \psi (D))$ acts as the multiplier $\cos (t\psi (\xi))$, and $[r]$ stands for the integer part function.

Observe that the operator $L_1$ acts continuously in  $C^\infty (\T \times M)$, for all $N>0$, and  $\cos (t \cos (t[|\xi|]^N) \xi )$
(we can rewrite depending in $j\in \N$ using the Laplace operator as $E(x,D)$) satisfies \eqref{ch-sign-interv1} and \eqref{ch-sign-interv2}.
If $N\leq 1$ the operator is $S^1_{1-N, 0}(\T)$ pseudo-differential operator (see the book of Ruaznasky-Turunen \cite{RT1}) while for $N>1$ it is not pseudo-differential operator but acts continuously in $C^\infty (\T^2)$.

The second operator does not act in $C^\infty (\T^2)$  and $\cos (t[e^{|\xi|]^N}) \xi$ satisfies neither \eqref{ch-sign-interv1} nor \eqref{ch-sign-interv2}.
\end{remark}

\smallskip

\begin{theorem}
If the CSPIL condition is satisfied, then the operator $L$ is not GH.
\end{theorem}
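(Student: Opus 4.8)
The plan is to mimic the construction in the proof of Theorem \ref{prop-3} (the ``change of sign and super-logarithmic growth'' result in the separation of variables case), adapting it to the present situation where the imaginary parts $b_j(t)$ are no longer of the form $b(t)\nu_j$ but only satisfy the CSPIL conditions \eqref{ch-sign-interv1}--\eqref{ch-sign-interv2}. As before, the goal is to exhibit $u=\sum_j u_j(t)\varphi_j(x)\in\D(\T\times M)\setminus C^\infty(\T\times M)$ with $Lu\in C^\infty(\T\times M)$, so that $L$ is not (GH). First I would, for each $j$ large, pick the distinguished point $t^*_j$ and the primitive $B^j_{t^*}(t)=\int_{t^*_j}^t b_j(s)\,ds$ furnished by Lemma \ref{lamma-chang.sig} applied to $b_j$; by \eqref{ch-sign-max2} we have $B^j_{t^*}(t)<-c^*_j$ on the two intervals $[\alpha^*_j,\gamma^*_j]\cup[\delta^*_j,\beta^*_j]$, and $B^j_{t^*}(t)\le 0$ on all of $]t^j_0,t^j_0+2\pi[$. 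Then, exactly as in the proof of Theorem \ref{prop-3}, choose cut-off functions $g^*_j,\psi^*_j\in C^\infty(\T)$ with $\operatorname{supp}(\psi^*_j)\subset[0,2\pi]$, $\psi^*_j\equiv 1$ on $[\alpha^*_j,\beta^*_j]$, $\operatorname{supp}(g^*_j)\subset[\alpha^*_j,\beta^*_j]$ and $g^*_j\equiv 1$ on $[\gamma^*_j,\delta^*_j]$, and set
\begin{equation*}
u_j(t)=g^*_j(t)\,\exp\!\big(B^j_{t^*}(t)\psi^*_j(t)-i\widetilde A_j(t)\psi^*_j(t)\big)
\end{equation*}
for $j$ in the relevant subsequence (and $u_j\equiv 0$ otherwise), where $\widetilde A_j$ is the appropriate primitive of $a_j$. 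One then has $|u_j(t^*_j)|=1$, which forces $u\notin C^\infty(\T\times M)$ once we check $u\in\D(\T\times M)$.

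The second step is to verify the two regularity estimates. For $u\in\D$: on $\operatorname{supp}(g^*_j)$ the exponential $e^{B^j_{t^*}(t)\psi^*_j(t)}\le 1$, so Leibniz plus the first-order bounds \eqref{GHgenD1a} on $a_j,b_j$ (and their $t$-derivatives via \eqref{GHgenD1b}) give $|\partial_t^\beta u_j(t)|\le C\,j^{N}$ for some $N$ independent of $\beta$ up to a polynomial factor — here one uses that the derivatives of $g^*_j,\psi^*_j$ are controlled polynomially in $j$, which is precisely what \eqref{ch-sign-interv1} guarantees (the interval lengths shrink no faster than $j^{-r}$, so a standard bump function on them has derivatives $O(j^{r\beta})$). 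Hence $u\in\D(\T\times M)$. For $f\doteq Lu$: computing $Lu$, the terms where $\partial_t$ hits the exponential cancel against the $a_j,b_j$ multiplier terms (as in \eqref{prop-3}), leaving $f_j(t)=-i(g^*_j)'(t)\exp(\dots)$, supported in $[\alpha^*_j,\gamma^*_j]\cup[\delta^*_j,\beta^*_j]$ where $B^j_{t^*}(t)<-c^*_j$. Therefore
\begin{equation*}
|\partial_t^\beta f_j(t)|\le C\,j^{N'}\,e^{-c^*_j}\le C\,j^{N'}e^{-c_j},
\end{equation*}
and the super-logarithmic growth \eqref{ch-sign-interv2}, namely $c_j/\ln(2+j)\to+\infty$, means that for every $\eta>0$ one has $c_j\ge\eta\ln j$ for $j$ large, so $|\partial_t^\beta f_j(t)|\le C\,j^{-\eta+N'}$ with $\eta$ arbitrary; thus $\{f_j\}$ satisfies \eqref{2-prop-smooth-2} and $f\in C^\infty(\T\times M)$.

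The case $b_j\to$ (negative side): if the relevant subsequence has the partitions from \eqref{ch-sign-min2} instead, one uses the primitive $B^j_{t_*}$ with $B^j_{t_*}(t)\ge 0$ everywhere and $\ge c_*^j$ on the designated intervals, replacing $g^*_j,\psi^*_j$ by $g_*^j,\psi_*^j$ and taking $u_j(t)=g_*^j(t)\exp(-B^j_{t_*}(t)\psi_*^j(t)-i\widetilde A_j(t)\psi_*^j(t))$ (so that again the exponent is $\le 0$ on the support of $u_j$ and $\le -c_*^j$ on the support of $f_j$); the estimates are symmetric. The main obstacle I expect is bookkeeping rather than conceptual: one must make sure that the cut-off functions $g^*_j,\psi^*_j$ can be chosen with derivative bounds that are genuinely polynomial in $j$ — this is where \eqref{ch-sign-interv1} is essential and where the analogy with Theorem \ref{prop-3} (which had fixed cut-offs, independent of $j$) breaks down and needs care. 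A secondary point is to confirm that the mixed real-part phase $\widetilde A_j$ contributes only polynomially growing factors (it does, by \eqref{GHgenD1a}--\eqref{GHgenD1b}) so that it does not spoil either the $\D$-membership of $u$ or the decay of $f$. Once these uniformities are in place, the argument concludes exactly as in Theorem \ref{prop-3}.
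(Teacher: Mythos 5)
Your proposal is correct and follows essentially the same route as the paper's proof: both construct a singular solution $u=\sum u_j(t)\varphi_j(x)$ from $j$-dependent cut-offs with polynomially controlled derivatives (the paper realizes them explicitly as characteristic functions convolved with $\varphi_{\varepsilon j^{-N}}$, whereas you just assert their existence, but the role of condition \eqref{ch-sign-interv1} is identical), multiplied by $e^{-iA_{t^*_j}+B_{t^*_j}}$ so that $|u_j(t^*_j)|=1$ and $f_j$ is supported where $B_{t^*_j}<-c^*_j$, and then uses \eqref{ch-sign-interv2} to convert $e^{-c_j}$ into $j^{-\eta}$ for arbitrary $\eta$. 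The only cosmetic difference is that you carry a second cut-off $g^*_j$ (mimicking Theorem \ref{prop-3}) while the paper's proof uses a single $\psi^*_j$ playing the inner role directly; since the exponent need not be separately truncated once the outer factor vanishes, the two choices are equivalent.
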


\begin{proof}
Define the functions $\psi_j^*(x) $, $\psi^j_*(x) $, $j\in \N$ as follows:
\begin{align}
\psi_j^*(t) & = \chi_{[ 1/2(\gamma^*_j + \alpha^*_j), 1/2(\delta^*_j + \beta^*_j)]} * \varphi_{\veps j^{-N}} (t),   \label{p-1}\\[2mm]
\psi^j_*(t) & =  \chi_{[  1/2(\gamma_*^j + \alpha_*^j) , 1/2(\gamma_*^j + \beta_*^j)]} * \varphi_{\veps j^{-N}}(t), \label{p-2}
\end{align}
where $0 <\veps \ll 1$ and
\begin{align}
\varphi_\eta & =  \eta^{-1}\varphi (t \eta^{-1}), \quad \eta >0,  \label{p-3} \\[2mm]
\varphi(t)  & =
\left\{
\begin{array}{l} e^{-1/(1-t^2)}, \ \textrm{ if } \ t\in ]-1,1[\\[2mm]
 0, \  \textrm{ if } \ |t| \geq 1.
\end{array}
\right. \label{p-4}
\end{align}

If $0 <\veps \ll 1$ one has
\begin{align}
\psi_j^*(t)  & = 1, \ \ t \in [ \gamma^*_j , \delta_j^* ], \ \ \supp (\psi_j^*) \subset [ \alpha^*_j , \beta_j^* ], \label{p-5}\\[2mm]
\psi^j_*(t)  & = 1, \ \ t \in [ \gamma_*^j , \delta^j_* ], \ \ \supp (\psi^j_*) \subset [ \alpha_*^j , \beta^j_* ]. \label{p-6}
\end{align}

Then $u^*$ and $u_*$ defined by
\begin{align}
u^*_j(t) & = \psi_j^*(t)e^{-iA_{t^*_j}(t) +B_{t^*_j}(t)}  \label{p-7}\\[1mm]
u^*_j(t) & = \psi^j_*(t)e^{iA_{t_*^j}(t) +B_{t_*^j}(t)},  \label{p-8}
\end{align}
satisfy
\begin{align*}
& Lu^* \in C^\infty (\T \times M) \ \ \textrm{ and } \ \ u^* \in D'(\T\times M) \setminus C^\infty (\T \times M), \\[2mm]
& Lu_* \in C^\infty (\T \times M) \ \ \textrm{ and } \ \ u_* \in D'(\T\times M) \setminus C^\infty (\T \times M),
\end{align*}
which yields the desired conclusion.

\end{proof}

\end{document}